\newtheorem{prop}{Proposition}[subsection]
\newtheorem{thm}[prop]{Theorem}
\newtheorem{thmintr}{Theorem}[section]
\newtheorem{lem}[prop]{Lemma}
\newtheorem{cor}[prop]{Corollary}
\theoremstyle{definition}
\newtheorem{rem}[prop]{Remark}
\newtheorem{exwith}[prop]{Example}
\newtheorem*{ack}{Acknowledgement}
\def\co{\colon\thinspace}
\newcommand{\C}{\mathbb C}
\newcommand{\rmd}{\mathrm d}
\newcommand{\D}{\mathbb D}
\newcommand{\DD}{\mathcal D}
\newcommand{\rme}{\mathrm e}
\newcommand{\EE}{\mathcal E}
\newcommand{\Hp}{\mathbb H}
\newcommand{\HH}{\mathcal H}
\newcommand{\rmi}{\mathrm i}
\newcommand{\N}{\mathbb N}
\newcommand{\R}{\mathbb R}
\newcommand{\TT}{\mathcal T}
\newcommand{\VV}{\mathcal V}
\newcommand{\WW}{\mathcal W}
\newcommand{\Z}{\mathbb Z}
\newcommand{\lra}{\longrightarrow}
\newcommand{\ra}{\rightarrow}
\DeclareMathOperator{\Area}{\mathrm{Area}}
\DeclareMathOperator{\dR}{\mathrm{dR}}
\DeclareMathOperator{\dist}{\mathrm{dist}}
\DeclareMathOperator{\grad}{\mathrm{grad}}
\DeclareMathOperator{\Hess}{\mathrm{Hess}}
\DeclareMathOperator{\hh}{\mathrm{Hofer}}
\DeclareMathOperator{\II}{\mathrm{II}}
\DeclareMathOperator{\length}{\mathrm{length}}
\DeclareMathOperator{\loc}{\mathrm{loc}}
\DeclareMathOperator{\proj}{\mathrm{proj}}
\DeclareMathOperator{\Reg}{\mathrm{Reg}}
\begin{document}

\author{Youngjin Bae}
\address{
Research Institute for Mathematical Sciences,
Kyoto University, Kyoto 606-8317, Japan
}
\email{ybae@kurims.kyoto-u.ac.jp}

\author{Kevin Wiegand}
\author{Kai Zehmisch}
\address{Kevin Wiegand and Kai Zehmisch, Justus-Liebig-Universit{\"a}t Giessen,
Mathematisches Institut, Arndtstrasse 2, D-35392 Giessen, Germany}
\email{Kevin.E.Wiegand@math.uni-giessen.de, Kai.Zehmisch@math.uni-giessen.de}

\title[Virtually contact structures]
{Periodic orbits in virtually contact structures}

\date{28.08.2018}

\begin{abstract}
  We prove that certain non-exact
  magnetic Hamiltonian systems
  on products of closed hyperbolic surfaces
  and with a potential function of large oscillation
  admit non-constant contractible periodic solutions
  of energy below the Ma\~n\'e critical value.
  For that we develop a theory of holomorphic curves
  in symplectisations of non-compact contact manifolds
  that arise as the covering space of a virtually contact structure
  whose contact form is bounded
  with all derivatives up to order three.
\end{abstract}

\subjclass[2010]{53D35, 70H05, 37J45}
\thanks{YB is supported by IBS-R003-D1.
KW is supported by the SFB 878 - Groups, Geometry and Actions.
KZ is partially supported by the SFB/TRR 191 - Symplectic Structures in Geometry,
Algebra and Dynamics and DFG grant ZE 992/1-1.}

\maketitle

\tableofcontents

%%%%%%%%%%%%%%%%%%%%%%%%%%%%%%%%%%%
%%%%%%%%%%%%%%%%%%%%%%%%%%%%%%%%%%%

\section{Introduction\label{sec:intro}}

One of the important questions
in contact geometry is
whether the Reeb vector field
of a given contact form $\alpha$
on a contact manifold
$(M',\xi=\ker\alpha)$ admits a periodic solution.
The existence of closed Reeb orbits
on closed contact manifolds $(M',\xi)$
was conjectured by Weinstein \cite{wein79}.
For compact contact type hypersurfaces in $\R^{2n}$
the conjecture was verified by Viterbo \cite{vit87}
and for closed $3$-dimensional contact manifolds
by Taubes \cite{tau07}.
Hofer \cite{hof93} used finite energy holomorphic planes
in symplectisations to verify the Weinstein conjecture
in many instances of $3$-dimensional contact manifolds.
Based on Hofer's finite energy holomorphic curves
the Weinstein conjecture was shown to hold true
for a large class of higher-dimensional closed contact manifolds,
see \cite{ach05,ah09,dgz14,dgz,gz12,gz16a,gnw16,nr11,sz17}.

Non-closed contact manifolds $(M',\xi)$
in general admit aperiodic Reeb vector fields.
For example the Reeb flow of the Reeb vector field
$\partial_z$ of the contact form
$\rmd z+\mathbf{x}\rmd\mathbf{y}$
on $\R^{2n-1}$ is linear.
Imposing additional topological conditions on the manifold $M'$
and asymptotic conditions on the contact form $\alpha$
existence of periodic Reeb orbits can be shown
in many interesting cases, see \cite{bprv16,bpv09,gz16b,sz16}.

In this article we will consider open contact manifolds $(M',\xi)$
that are induced by so-called virtually contact structures.
This means that $M'$ is the total space
of a covering $\pi\co M'\ra M$
of a closed odd-dimensional symplectic manifold $(M,\omega)$
such that $\pi^*\omega=\rmd\alpha$
and $\alpha$ admits uniform lower and upper bounds
with respect to a lifted metric,
cf.\ Section \ref{subsec:virtcontstr}.
We will consider non-trivial
virtually contact structures exclusively
whose odd-dimensional symplectic form $\omega$
itself is not the exterior differential of a contact form on $M$.

It was asked by Paternain
whether the Reeb vector fields
of virtually contact structures
admit periodic orbits.
Under an additional $C^3$-bound on the contact form $\alpha$
with respect to the Levi--Civita connection of the lifted metric
we are able to make Hofer's theory about finite energy planes work
for virtually contact manifolds
and answer Paternain's question
affirmatively in the following cases:

\begin{thmintr}
\label{thm:thmaboutvcs}
 Let $(M',\xi=\ker\alpha)$ be the total space
 of a virtually contact structure
 on a closed odd-dimensional
 symplectic manifold $(M,\omega)$.
 Assume that the contact form $\alpha$
 is $C^3$-bounded.
 Then the Reeb vector field of $\alpha$ on $M'$
 admits a contractible periodic orbit
 provided that one of the following conditions
 for the $(2n-1)$-dimensional contact manifold
 $(M',\xi)$ is satisfied:
 \begin{enumerate}
 \item 
   $n=2$ and $\xi$ is overtwisted,
 \item
   $n=2$ and $\pi_2M'\neq0$,
 \item 
   $n\geq3$ and $(M',\xi)$ contains
   a Legendrian open book with boundary,
 \item 
   $n\geq3$ and $(M',\xi)$ contains
   the upper boundary of the standard
   symplectic handle of index $1\leq k\leq n-1$
   whose belt sphere $S^{2n-1-k}\subset M'$
   represents a non-trivial element in
   \begin{enumerate}
     \item
      $\pi_{2n-2}M'$ if $k=1$,
     \item
      $\pi_3M'$ if $n=3$ and $k=2$,
     \item
      $\pi_4M'$ if $n=4$ and $k=3$,
     \item
      the oriented bordism group
      $\Omega^{SO}_{2n-1-k}M'$ if $k\geq2$,
   \end{enumerate}
 \item 
   $n\geq3$ and $(M',\xi)$
   is obtained by a covering contact connected sum
   (see \cite{wz16})
   such that the underlying connected sum
   decomposition of $M$ is non-trivial
   and $\omega$ is not the exterior differential
   of a contact form on $M$.
 \end{enumerate}
\end{thmintr}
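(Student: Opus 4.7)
The plan is to verify each of the five cases by running Hofer's strategy of filling with $J$-holomorphic curves in the symplectisation $\R\times M'$ and analyzing their degeneration into a finite energy plane asymptotic to a contractible periodic Reeb orbit. The principal difficulty, which is absent in the classical closed setting, is to maintain compactness of the relevant moduli spaces in the non-compact target $M'$; this is where the $C^3$-bound on $\alpha$ will enter decisively.

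I would begin by fixing an $\R$-invariant almost complex structure $J$ on the symplectisation $\bigl(\R\times M',\rmd(\rme^t\alpha)\bigr)$ that is compatible with $\alpha$. The covering projection $\pi\co M'\ra M$ is a local isometry for the lifted metric, and the $C^3$-bound on $\alpha$ implies a bounded-geometry package on $M'$: uniform injectivity radius, uniform curvature bounds, and uniform $C^2$-bounds on the Reeb vector field and on $J$. This package should yield the standard toolkit of uniform elliptic estimates, monotonicity, and a long-cylinder lemma of the Bourgeois--Eliashberg--Hofer--Wysocki--Zehnder type, but now valid uniformly on the non-compact cover. Pushing down to $M$, compactness of $M$ compensates for non-compactness upstairs, provided no energy escapes horizontally.

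Next, for each of the configurations (1)--(5) I would produce a moduli space of $J$-holomorphic curves in $\R\times M'$ starting from the given geometric filling object. In case (1) an overtwisted disk initiates a Bishop family of half-filled holomorphic disks, as in Hofer's original argument. In case (2) a non-trivial spherical class yields a moduli space of holomorphic spheres. In case (3) a Legendrian open book with boundary supplies holomorphic disks with Legendrian totally real boundary conditions, following Niederkr\"uger and its extensions. In case (4) the upper boundary of a standard symplectic handle with topologically essential belt sphere furnishes a Bishop family whose boundary sweeps the belt sphere and, by the various non-triviality hypotheses in (a)--(d), cannot close up. Finally, in case (5) the covering contact connected sum construction of \cite{wz16} provides a non-separating sphere lifting a non-trivial connected summand, which supports the requisite moduli space and is obstructed from bounding since $\omega$ is not the differential of a contact form on $M$.

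The core technical step, and the main obstacle, is compactness/confinement of these moduli spaces inside a relatively compact region of $\R\times M'$. The uniform two-sided bound on $\alpha$ together with the exactness $\pi^*\omega=\rmd\alpha$ gives an isoperimetric-type inequality on $M'$, which converts the Hofer energy of a curve into an a priori $C^0$-bound on its $M'$-component; the $C^3$-bound then prevents derivative blow-up along sequences and rules out escape of bubbles to infinity in $M'$. Once compactness is established, the standard bubbling-off analysis of the Bishop family (respectively of the moduli space in cases (2)--(5)) cannot terminate in the filling object itself, so a finite energy plane must split off, and by construction its asymptotic limit is a contractible periodic Reeb orbit of $\alpha$ on $M'$. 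The real work, which I expect to occupy the bulk of the paper, is establishing this uniform compactness on the non-compact cover; the reductions from each of (1)--(5) to the existence of a finite energy plane will then follow the now-classical blueprint.
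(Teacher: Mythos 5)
Your high-level architecture matches the paper: fill each of (1)--(5) with a Bishop-type family of holomorphic discs (or spheres, or discs with Legendrian boundary), extract a finite energy plane when the family cannot close up, and read off a periodic Reeb orbit from its asymptotics. The place where your proposal goes astray is precisely in the account of the central new mechanism, namely how the $C^3$-bound yields compactness.

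You write that compactness of $M$ compensates for non-compactness of $M'$ ``provided no energy escapes horizontally,'' and that the $C^3$-bound ``prevents derivative blow-up along sequences.'' Neither of these captures what actually happens. The almost complex structure $J$ is built from the pair $(\alpha,R)$, and neither $\alpha$ nor the Reeb vector field $R$ descends to $M$ --- only $\rmd\alpha=\pi^*\omega$ and $g'$ do --- so you cannot simply ``push down to $M$.'' The monotonicity estimate (Proposition~\ref{prop:distestihofenerg}) does convert energy into a $C^0$-bound in the $M'$-direction, but the bound is proportional to $\rme^{4\max_{\D}|a|}$; as soon as the $\R$-coordinate $a$ of a bubbling sequence runs to $-\infty$ --- which is exactly what must happen to produce a finite energy plane asymptotic to a Reeb orbit --- that $C^0$-control in $M'$ disappears. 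The remedy is to rescale and simultaneously apply deck transformations $\varphi_\nu$ that carry the bubble point back into a fixed fundamental domain. But deck transformations preserve $g'$ and $\omega'$, not $\alpha$; so the rescaled curves are holomorphic for the pulled-back data $(\alpha_\nu,J_\nu)=(\varphi_\nu^*\alpha,F_\nu^*J)$, and one needs a $C^1_{\loc}$-convergent subsequence of the $J_\nu$ to run elliptic bootstrapping and Hofer's asymptotic analysis. That is what the $C^3$-bound buys: via the Arzel\`a--Ascoli argument of Proposition~\ref{prop:arzelaascoli} and Corollary~\ref{cor:concludwithckonly} one obtains $C^2_{\loc}$-convergence of $\alpha_\nu$ and hence $C^1_{\loc}$-convergence of $J_\nu$ (Lemma~\ref{lem:convofaimplstheoneofj}, Remark~\ref{rem:remonckbounds}), and this must be iterated a second time at the level of the limit $(\alpha_0,J_0)$ when bounding the gradient of the finite energy cylinder in Section~\ref{subsec:afinenergcyl}, which is where the budget of derivatives is spent. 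Without this deck-transformation Arzel\`a--Ascoli step your compactness argument does not close, because bounded geometry of $(M',g')$ by itself gives no control on the sequence of contact forms $\varphi_\nu^*\alpha$ and hence none on the sequence of almost complex structures seen by the rescaled curves.
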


We prove Theorem \ref{thm:thmaboutvcs}
in Section \ref{subsec:germofhlomdiscs}.
Before discussing the ingredients of the proof
we mention applications to Hamiltonian dynamics:
In classical mechanics one studies the motion
of charged particles on a configuration space $Q$
under the influence of a magnetic field $\sigma$
and a conservative force field $V$.
It is assumed that a Riemannian metric is chosen on $Q$
so that a Levi--Civita connection $\nabla$,
the gradient of $V$, and the vector potential $B$
given by $\langle B\,.\,,\,.\,\rangle=\sigma$
are defined.
The equations of motion
\[
\nabla_{\dot{\mathbf{q}}}\dot{\mathbf{q}}=
-\grad V(\mathbf{q})+B(\mathbf{q})\dot{\mathbf{q}}
\]
are determined by the Hamiltonian function
\[
H(\mathbf{q},\mathbf{p})=
\frac12|\mathbf{p}|^2+V(\mathbf{q})
\]
given by the sum of kinetic
and potential energy
and the symplectic form
\[
\rmd(\mathbf{p}\,\rmd\mathbf{q})+
\sigma_{ij}(\mathbf{q})\,\rmd q^i\wedge\rmd q^j
\]
taking local coordinates $(\mathbf{q},\mathbf{p})$
on the cotangent bundle $T^*Q$.
If the magnetic form is exact
variational methods can be used
to prove existence of periodic solutions
as done for example
by Contreras \cite{con06} using
the {\bf Ma\~n\'e critical value}
\[
\inf_{\theta}\sup_{Q}H(\theta)\,,
\]
where the infimum is taken
over all primitive $1$-forms
$\theta$ of $\sigma$.
We remark that the Ma\~n\'e critical value
is always bounded from below
by $\max_{Q} V$.
If the magnetic form is non-exact
but lifts to an exact form to the universal cover of $Q$
it was shown by Merry \cite{mer10}
how to utilize the Ma\~n\'e critical value
of the lifted Hamiltonian
on the universal cover instead.
Finiteness of the Ma\~n\'e critical value
in that situation
is equivalent to the existence of a bounded
primitive with respect to the lifted metric.

Arnol'd \cite{arn61} and Novikov \cite{nov82,nosh81}
initiated the study of magnetic flows
in the context of symplectic topology.
Motivated by their existence results obtained by
Rabinowitz--Floer homology
Cieliebak--Frauenfelder--Paternain \cite{cfp10}
formulated a paradigm how a magnetic flow
should behave in terms of the Ma\~n\'e critical value
of the magnetic system.
In order to confirm the conjectured paradigm
for energies below the Ma\~n\'e critical value
in parts
we state the following theorem:

\begin{thmintr}
\label{thmintr:truncclasshammotion}
Let $Q$ be a closed hyperbolic surface,
i.e.\ a Riemannian surface of curvature $-1$.
Let $V$ be a Morse function on $Q$
that has a unique local maximum
which is required to be positive.
Let $v_0>0$ be a positive real number
such that $\{V\geq-v_0\}$
contains no critical point of $V$
other than the maximum.
Let $\sigma$ be a $2$-form on $Q$
that vanishes on the disc $\{V\geq-v_0\}$.
If
\[
v_0>\inf_{\theta}\sup_{\widetilde{Q}}\frac12|\theta|^2\,,
\]
where the infimum is taken over all
$C^3$-bounded primitive $1$-forms $\theta$
of the lift of $\sigma$ to the universal cover
$\widetilde{Q}$ that vanish on $\{\widetilde{V}\geq-v_0\}$
for the lifted potential $\widetilde{V}$,
then the equations of motion
of a charged particle on $Q$
under the influence of the magnetic field $\sigma$
and the presence of the potential $V$
have a non-constant periodic solution
of energy $H=0$ that is contractible in $\{V\leq0\}$.
\end{thmintr}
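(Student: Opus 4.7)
The strategy is to realise the magnetic Hamiltonian flow on the energy hypersurface $\Sigma_0:=\{H=0\}$ as the Reeb flow of a virtually contact structure on a closed $3$-manifold, and then to apply Theorem~\ref{thm:thmaboutvcs}(2). Using the hypothesis, fix first a $C^3$-bounded primitive $\theta$ of the lift $\widetilde\sigma$ to $\widetilde Q=\Hp^2$ with $\theta\equiv 0$ on $\{\widetilde V\geq -v_0\}$ and $\sup\tfrac12|\theta|^2<v_0$. On the twisted cotangent bundle $(T^*Q,\omega_\sigma)$, with $\omega_\sigma$ equal to $\rmd\lamst$ plus the pullback of $\sigma$, the Hamiltonian $H=\tfrac12|p|^2+V$ satisfies $\rmd H\neq 0$ throughout $\Sigma_0$: a zero would force both $p=0$ and $\rmd V=0$, but the only critical point of $V$ with $V\geq -v_0$ has value $V_{\max}>0$, and all other critical points of $V$ lie in $\{V<-v_0\}\subset\{H<0\}$. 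Hence $\Sigma_0$ is a smooth closed $3$-manifold carrying the closed $2$-form $\omega:=\omega_\sigma|_{\Sigma_0}$ of maximal rank, whose kernel line field is spanned by the Hamiltonian vector field $X_H$.

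Set $M:=\Sigma_0$ and let $M'\subset T^*\widetilde Q$ be its preimage under the covering $T^*\widetilde Q\to T^*Q$, which provides a $\pi_1(Q)$-cover $\pi\co M'\to M$. Taking $\alpha_0:=(\widetilde\lamst+\widetilde\theta)|_{M'}$, where $\widetilde\theta$ is the pullback of $\theta$ to $T^*\widetilde Q$, one has $\rmd\alpha_0=\pi^*\omega$. A direct calculation yields $\alpha_0(X_H)=-2V+\theta(\dot q)$. On $\{V\geq -v_0\}$ this equals $-2V\geq 0$ and vanishes only along the ``pinch circle'' $\{V=0,\,p=0\}\subset\Sigma_0$; on $\{V\leq -v_0\}$ Cauchy--Schwarz gives $|\theta(\dot q)|\leq|\theta|\sqrt{-2V}$ and, combined with the strict bound $|\theta|<\sqrt{2v_0}\leq\sqrt{-2V}$, yields $\alpha_0(X_H)>0$. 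To cure the pinch I add $\rmd f$ for a $\pi_1(Q)$-invariant smooth function $f$ on $T^*\widetilde Q$ of the form $\chi(V)\langle p,\nabla V\rangle$, with bump $\chi$ supported near $V=0$ and of the sign making $\rmd f(X_H)=-\chi|\nabla V|^2+O(p)$ strictly positive at the pinch; choosing the support of $\chi$ small makes $|\rmd f(X_H)|$ small elsewhere, so that $\alpha:=\alpha_0+\rmd f$ is a contact form on all of $M'$. The $C^3$-bound on $\theta$, the bounded geometry of $\Hp^2$, and the $\pi_1(Q)$-invariance of $f$ together ensure that $\alpha$ is uniformly bounded with all derivatives up to order three, as required for a virtually contact structure.

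To invoke Theorem~\ref{thm:thmaboutvcs}(2) I identify the topology of $\Sigma_0$. Writing $Q_-:=\{V\leq 0\}$ (a genus-$g$ surface with one boundary circle), the energy hypersurface is $Q_-\times S^1$ with the fibre $S^1$ collapsed to a point over $\partial Q_-$; equivalently, $\Sigma_0$ is the Dehn filling of $Q_-\times S^1$ along its boundary torus by a solid torus whose meridian is the fibre $S^1$. A Van Kampen computation then shows $\pi_1(\Sigma_0)\cong F_{2g}$: the meridian relation kills the fibre generator, and the longitude of the solid torus is identified with the boundary loop $\prod_{i=1}^g[a_i,b_i]\in\pi_1(Q_-)=F_{2g}$, which imposes no further relation among the free generators. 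By Milnor's prime decomposition theorem for closed oriented $3$-manifolds with free fundamental group, $\Sigma_0\cong\#^{2g}(S^1\times S^2)$, whence $\pi_2(M')=\pi_2(M)\neq 0$. Theorem~\ref{thm:thmaboutvcs}(2) then produces a contractible closed Reeb orbit on $M'$; projecting to $M\subset T^*Q$ yields the desired non-constant periodic magnetic orbit at energy zero, and contractibility in $\Sigma_0$ pushes forward through the projection $\Sigma_0\to Q_-$ to contractibility in $\{V\leq 0\}$. The principal technical obstacle is the construction of the corrective $1$-form $\rmd f$ together with the quantitative verification that the $C^3$-bound required by Theorem~\ref{thm:thmaboutvcs} is preserved under this perturbation.
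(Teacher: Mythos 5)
Your strategy matches the paper's (cast $\{H=0\}$ as virtually contact, establish $\pi_2\neq0$, apply the $3$-dimensional case of the main theorem), but the execution differs in two notable places where you replace citations to \cite{wz16} with direct arguments, so it is worth recording the comparison. (i) For $\pi_2 M\neq 0$ the paper simply cites \cite[Theorem~1.2]{wz16}, whereas you compute $\pi_1(\Sigma_0)\cong F_{2g}$ by Van Kampen applied to the Dehn filling description and then invoke prime decomposition plus Grushko/Kurosh to get $\Sigma_0\cong\#^{2g}(S^1\times S^2)$; this is correct (one does not even need the Poincar\'e conjecture, since a spurious homotopy-sphere summand would not kill $\pi_2$) and is more self-contained than the paper's citation. (ii) For the virtually contact structure the paper defers to \cite[Section~3.3]{wz16} and to the normalization in items (1)--(3) of Section~\ref{subsec:clahammagfie}, whereas you verify $\alpha_0(X_H)=-2V+\theta(\dot q)>0$ off the pinch circle $\{V=0,\,p=0\}$ directly and propose a cut-off correction $\rmd f$ with $f=\chi(V)\langle p,\nabla V\rangle$; this is in the same spirit as the paper's own use of $-\varepsilon\,\rmd F$ with $F=\lambda\bigl(\grad(\tau^*V)\bigr)$ (no cut-off) in the proof of Theorem~\ref{thmintr:hightruncclasshammotion}, and either variant works, though your version needs the two-scale choice $\delta=\|\chi\|_{C^0}\ll\varepsilon=|\mathrm{supp}\,\chi|$ to control the $\chi'$-term $\chi'(V)\langle p,\nabla V\rangle^2$ against $\alpha_0(X_H)=-2V=|p|^2$; you honestly flag this as the remaining technical point. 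Two small additions would complete the argument: you should appeal explicitly to Proposition~\ref{ckthetatockalpha} (rather than ``bounded geometry of $\Hp^2$'') to transfer the $C^3$-bound from $\theta$ to $\alpha_0=(\tilde\lambda+\tilde\tau^*\theta)|_{TM'}$, and you should record the non-constancy argument, which the paper does: a constant zero-energy solution would sit in $\{V=0\}$ where $\sigma$ vanishes, but $\{V=0\}$ contains no critical point of $V$, so no constant solution exists. Each approach buys something: the paper's is shorter because the bulk of the geometry is outsourced to \cite{wz16}, while yours makes the topology of the energy surface and the contact-type condition transparent within the present paper.
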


We will prove the theorem
in Section \ref{subsec:clahammagfie}.
Observe that the $2$-form $\sigma$
on the surface $Q$
is automatically closed but
not necessarily exact.
In the formulation of the theorem
we explicitly allow $\sigma$
to be non-exact.
As shown in Section \ref{subsec:truncthemagfiel}
there always exist $C^3$-bounded primitive $1$-forms $\theta$
on the hyperbolic upper half-plane
with the vanishing condition as stated,
so that the inequality in the theorem
indeed can be satisfied.
We remark
that a change of the metric
in the definition of the kinetic energy
results in a change of $v_0$
and that the Ma\~n\'e critical value
of the described magnetic system is positive.
In Example \ref{ex:reginflmagnontriv}
we describe a class of magnetic Hamiltonian systems
for which the trace of the solution found in
Theorem \ref{thmintr:truncclasshammotion}
intersects the region where the magnetic field does not vanish.

Periodic low-energy solutions
for Hamiltonian systems with
a non-vanishing magnetic field
were found by Schlenk \cite{sch06}.
If the magnetic form is symplectic
the reader is referred to the work of
Ginzburg--G\"urel \cite{gg09}.
The case where $Q$ equals the $2$-sphere
is discussed by Benedetti--Zehmisch in \cite{bz15}.
If $Q$ is a $2$-torus we refer to
Schlenk \cite{sch06} as well as to
Frauenfelder--Schlenk \cite{fsch07},
where periodic magnetic geodesics
(i.e.\ Hamiltonians with vanishing potentials)
are studied.
For more existence results of periodic solutions
in magnetic Hamiltonian mechanics
inspired by the work of
Ginzburg \cite{gin87}, Polterovich \cite{pol95},
and Ta\u\i manov \cite{tai92}
confer
\cite{aabmt17,ammp17,amp15,af07,ab16,ab15,bf11,bz15,mer11,sch06}
and the citations therein.

A higher-dimensional analogue to
Theorem \ref{thmintr:truncclasshammotion} is:

\begin{thmintr}
\label{thmintr:hightruncclasshammotion}
Let $Q$ be a product of closed hyperbolic surfaces.
Assume that $Q$ admits a potential function $V$
together with a choice of regular value $-v_0<0$
and a closed magnetic $2$-form $\sigma$
satisfying the conditions
described in Theorem \ref{thmintr:truncclasshammotion}.
In addition, assume that $\sigma$
is cohomologous to a $\R$-linear combination
of the area forms corresponding to the factors of $Q$.
Then the magnetic flow of the Hamiltonian $H$
carries a non-constant periodic solution
of zero energy that is contractible in $\{V\leq0\}$.
\end{thmintr}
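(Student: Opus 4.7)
My plan is to reduce Theorem \ref{thmintr:hightruncclasshammotion} to Theorem \ref{thm:thmaboutvcs} by the same recipe that proves Theorem \ref{thmintr:truncclasshammotion}: build a virtually contact structure on (the non-degenerate part of) the energy level $\Sigma=\{H=0\}\subset T^*Q$ equipped with the twisted symplectic form $d\lambda_{\mathrm{can}}+\pi_Q^*\sigma$, check $C^3$-boundedness of the resulting contact form, and verify that one of the topological conditions of Theorem \ref{thm:thmaboutvcs} holds. All steps except the first and the last should be formal adaptations of the two-dimensional case.

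The crucial new ingredient is the cohomological assumption on $\sigma$, which I would use to guarantee that $\widetilde{\sigma}$ admits a $C^3$-bounded primitive on the product universal cover $\widetilde Q=\Hp^2\times\cdots\times\Hp^2$. In dimension two this is automatic because $H^2(Q;\R)\cong\R$ is generated by the area form. Writing $\sigma=\sum_i c_i\,\omega_{Q_i}+d\beta$, each pulled-back area form is the hyperbolic area form on the $i$-th factor of $\widetilde Q$, which admits a $C^3$-bounded primitive — for instance $-y^{-1}dx$ in the upper half-plane model, of unit hyperbolic norm and with bounded covariant derivatives — while the lift $\tilde\beta$ is $C^3$-bounded because $\beta$ lives on compact $Q$. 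Hence the infimum appearing in the hypothesis inherited from Theorem \ref{thmintr:truncclasshammotion} is taken over a non-empty set, and by assumption one obtains an admissible $C^3$-bounded primitive $\theta$ of $\widetilde\sigma$ with $\sup\tfrac12|\theta|^2<v_0$ vanishing on $\{\widetilde V\geq -v_0\}$.

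With such $\theta$ fixed I would set $\alpha=\lambda_{\mathrm{can}}+\pi^*\theta$, a primitive of the lifted twisted symplectic form on $T^*\widetilde Q$. Along the lifted energy surface one has $|p|^2=-2\widetilde V$, hence $\alpha(X_H)=|p|^2+\theta(p)\geq\sqrt{-2\widetilde V}\bigl(\sqrt{-2\widetilde V}-|\theta|\bigr)>0$ throughout $\{\widetilde V<0\}$. The modifications of $H$ near its unique maximum and near the degenerate locus $\{V=0\}$ from the proof of Theorem \ref{thmintr:truncclasshammotion} then go through verbatim and produce a smooth closed hypersurface $M$ covered by a non-compact $M'$ on which $\alpha$ restricts to a contact form; the required $C^3$-bounds follow from the bounded geometry of $\widetilde Q$, the bounds on $\theta$, and the compactness of $M$.

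The main obstacle will be to identify which case of Theorem \ref{thm:thmaboutvcs} applies to $(M',\xi=\ker\alpha)$. The filling at the unique maximum of $V$ inserts a standard symplectic handle into $M'$ whose belt sphere lies in a neighbourhood that, lifted to $\widetilde Q$, embeds in the aspherical space $T^*\widetilde Q$; this should force the sphere class to be non-trivial either homotopically or in oriented bordism, placing the situation in case (4) of Theorem \ref{thm:thmaboutvcs}. Granting this topological step, Theorem \ref{thm:thmaboutvcs} supplies a contractible periodic Reeb orbit on $M'$, which descends to a non-constant periodic Hamiltonian orbit; the maximum-principle argument used in the surface case confines it to $\{V\leq 0\}$ and gives contractibility within that subset.
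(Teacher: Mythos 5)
Your framework (build a $C^3$-bounded virtually contact structure on the energy level and invoke Theorem~\ref{thm:thmaboutvcs}) is the paper's approach, and the passage from the cohomological assumption on $\sigma$ to a $C^3$-bounded primitive $\theta$ of $\mu^*\sigma$ on the product universal cover is correct and matches Section~\ref{subsec:clahammagfie}. But there are two genuine gaps.

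First, $\alpha=\tilde\lambda+\tilde\tau^*\theta$ does \emph{not} restrict to a contact form on $M'$. Your own inequality $\alpha(X_{\widetilde H})\geq\sqrt{-2\widetilde V}\bigl(\sqrt{-2\widetilde V}-|\theta|\bigr)$ degenerates to $0$ along the zero section over $\{\widetilde V=0\}\subset M'$, which is part of the energy surface. The proof of Theorem~\ref{thmintr:truncclasshammotion} contains no ``modifications of $H$ near the degenerate locus'' to quote verbatim -- for $n=2$ the paper simply cites \cite[Theorem~1.2 and Proposition~2.4.1]{wz16}. What the paper actually does for $n\geq 3$ is correct the \emph{primitive}, not the Hamiltonian: it sets $F=\lambda(\mathrm{grad}_m(\tau^*V))$ and works with $\alpha_\varepsilon=\bigl(\tilde\lambda+\tilde\tau^*\theta-\varepsilon\,\rmd\widetilde F\bigr)|_{TM'}$, using the Hessian of $\widetilde V$ to show $\alpha_\varepsilon(X_{\widetilde H})$ is \emph{uniformly} positive on $M'$ for small $\varepsilon$, and then Gray stability to compare with $\alpha$. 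Without this correction you have no contact form, and without uniformity you have no virtually contact structure.

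Second, the topological step is not just vague -- it is backwards. $T^*\widetilde Q$ is contractible (it deformation-retracts to $\widetilde Q\simeq\Hp^n$), so a sphere whose neighbourhood embeds in $T^*\widetilde Q$ is of course null-homotopic \emph{there}; this says nothing about non-triviality of the class in $M'$. Moreover, the handle you describe is not ``inserted by the unique maximum of $V$'': since $V$ is positive at its maximum, the corresponding critical point of $H$ lies \emph{outside} $W=\{H\leq 0\}$, which is exactly what makes $W$ a \emph{subcritical} Weinstein handle body (every critical point of $H$ in $W$ has index $\leq n-1$). The paper's argument is: identify $\partial W$ with $(M,\ker\alpha_W)$ via Gray stability, use Morse--Smale moves from \cite{ce12} to produce a unique top-index critical point of index $n-1$ with a standard model handle, and then invoke \cite[Section~3.2]{wz16} to show the belt sphere $S^n$ is non-zero in $H_n(M';\Z)$, hence in $\Omega^{SO}_n M'$, placing the situation in case~(4) of Theorem~\ref{thm:thmaboutvcs} (equivalently Theorem~\ref{thm:2n-1dhnnonzero}) with $k=n-1$. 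None of this follows from your asphericity remark.
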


In fact,
the proof given
in Section \ref{subsec:clahammagfie} shows
that Theorem \ref{thmintr:hightruncclasshammotion}
holds for any closed Riemannian manifold
$(Q,h)$ that carries a $2$-form $\sigma$
that is contained in a cohomology class
that is a linear combination of classes
represented by $2$-forms
admitting a $C^3$-bounded primitive
on the universal cover of $Q$.
For example one could take additional products
with closed Riemannian manifolds
in the formulation of
Theorem \ref{thmintr:hightruncclasshammotion}.

In order to prove the theorems
we consider the characteristics on
the energy surface $M$ of the Hamiltonian,
which is contained in $T^*Q$
equipped with the Liouville symplectic structure
twisted by the pull back of the magnetic form,
see \cite[Section 2.3]{wz16} and Section \ref{subsec:magensur}.
The main observation due to
Cieliebak--Frauenfelder--Paternain \cite{cfp10}
is that the energy surface $M'$ of the lifted Hamiltonian
on the universal cover of $T^*Q$ is of contact type
uniformly.
In fact,
the energy surface $M$ is virtually contact
in the sense of
Cieliebak--Frauenfelder--Paternain \cite{cfp10},
i.e.\ $M'$ admits a contact form $\alpha$
that is the restriction of a suitable primitive
of the twisted symplectic form,
cf.\ \cite[Section 2.1]{wz16} and Section \ref{subsec:virtcontstr}.
Due to the presence of a non-exact magnetic form
the energy surface $M'$ is typically non-compact,
see \cite[Theorem 1.2]{wz16}.

With the help of holomorphic curves
we will find periodic orbits on $M'$.
In the work of
Cieliebak--Frauenfelder--Paternain \cite{cfp10}
a variant of Rabinowitz--Floer homology
on the universal cover of $T^*Q$ is used.
In this article we will utilize
the fundamental work of Hofer \cite{hof93}
and study holomorphic curves in the symplectisation
of the contact manifold $(M',\alpha)$.
The Weinstein handle body structure
of the sublevel set of the Hamiltonian
is used in order to find germs of holomorphic disc fillings
as in the work of
Geiges--Zehmisch \cite{gz16a,gz16b} and
Ghiggini--Niederkr\"uger--Wendl \cite{gnw16},
see Section \ref{subsec:germofhlomdiscs}.

To handle the non-compactness of $M'$
local compactness properties
of holomorphic curves have to be ensured.
For holomorphic curves uniformly close
to the zero section
in the symplectisation of $(M',\alpha)$
the required uniform lower and upper bounds
on the contact form $\alpha$ suffice to
guarantee a tame geometry,
see Section \ref{sec:atamegeom}.
This leads to monotonicity type estimates
in Section \ref{sec:holomdiscs}.
In order to mimic Hofer's \cite{hof93}
asymptotic analysis we will use the deck transformation 
group of $\R\times M'$ which acts by isometries,
see Section \ref{sec:comp}.
The action on the tame structure in the $\R$-direction
is controlled by the use of the Hofer energy,
see \cite{hof93}.
The action on the contact form $\alpha$
in the direction of $M'$
is controlled by an Arzel\`a--Ascoli type argument
that requires
higher order bounds on the contact form,
see Section \ref{sec:hordboundsonprim}.
The upshot is that the holomorphic analysis developed in this paper
allows a reduction in the search of periodic orbits
to finding bounded primitives of higher order
on non-compact covering spaces.
Examples of how this principle works
are formulated in Section \ref{subsec:germofhlomdiscs}.
Furthermore,
it gives a method of how to find periodic Reeb orbits
on non-compact energy surfaces
that is different from the method in the work
of Suhr--Zehmisch \cite{sz16}.

%%%%%%%%%%%%%%%%%%%%%%%%%%%%%%%%%%%
%%%%%%%%%%%%%%%%%%%%%%%%%%%%%%%%%%%

\section{A tame geometry\label{sec:atamegeom}}

A virtually contact structure defines a bounded geometry
on the corresponding covering space.
In this section we describe the relation to the geometry
which is defined by the contact form on the covering space
and the induced complex structure
on the contact plane distribution.

%%%%%%%%%%%%%%%%%%%%%%%%%%%%%%%%%%%

\subsection{A virtually contact structure\label{subsec:virtcontstr}}

Let $M$ be a closed connected manifold
of dimension $2n-1$ for $n\geq2$.
Let $\omega$ be an odd-dimensional symplectic
form on $M$, i.e.\ a closed $2$-form
whose kernel is a $1$-dimensional distribution.
We assume that $(M,\omega)$ is virtually contact
in the sense of \cite{wz16}.
This means that we can choose a virtually contact structure
\[
\big(\pi\co M'\ra M, \alpha, \omega, g\big)
\]
on $(M,\omega)$,
where $\pi$ is a covering of $M$,
$\alpha$ is a contact form on the covering space $M'$
such that $\pi^*\omega=\rmd\alpha$,
and $g$ is a Riemannian metric on $M$,
whose lift $\pi^*g$ to $M'$ is denoted by $g'$.
By definition of a virtually contact structure
the primitive $\alpha$ is bounded with respect
to the norm $|\,.\,|_{(g')^{\flat}}$
of the dual metric $(g')^{\flat}$ of $g'$.
Moreover, there exists a constant $c>0$
such that for all $v\in\ker\rmd\alpha$
the following lower bound holds true:
\begin{equation}
 \label{lowerbound}\tag{LB}
 |\alpha(v)|\geq c|v|_{g'}.
\end{equation}
In addition,
we assume that the chosen
virtually contact structure is {\bf non-trivial},
i.e.\ that $\omega$ is not the exterior differential 
of a contact form on $M$.

Observe that the characteristic line bundle $\ker\omega$
of the odd-symplectic manifold $(M,\omega)$
is orientable precisely if $M$ is orientable.
In this situation we orient $M$, resp., $\ker\omega$
by the contact form $\alpha$ on $M'$ requiring
that the covering map $\pi$ preserves the orientation.

%%%%%%%%%%%%%%%%%%%%%%%%%%%%%%%%%%%

\subsection{A characterization\label{subsec:achar}}

The virtually contact property of the
odd-dimensional symplectic manifold $(M,\omega)$
can be formulated in a slightly different language.
We denote by $\xi=\ker\alpha$
the contact structure on $M'$
defined by the contact form primitive
$\alpha$ of $\pi^*\omega$.
The Reeb vector field of $\alpha$
is denoted by $R$
and defines a splitting $\R R\oplus\xi$ of $TM'$.
Moreover, the metric $g'$
defines an orthogonal splitting $\xi^{\perp}\oplus\xi$
of $TM'$
and the orthogonal projection onto $\xi^{\perp}$
is denoted by $\proj_{\perp}$.

With these notions understood
we formulate the geometric requirements
on $\alpha$,
which are given in terms of $g'$,
equivalently as follows:
There exist constants $c>0$ and $C>0$
such that
\[
|R|_{g'}\leq\frac{1}{c}
\quad\text{and}\quad
\frac{1}{C}\leq|\proj_{\perp}\!R\,|_{g'}.
\]
The constant $c$ corresponds to the one given in
Section \ref{subsec:virtcontstr}.
The constant $C$ can be taken to be $\sup_{M'}|\alpha|_{(g')^{\flat}}$,
where $(g')^{\flat}$ is the dual metric of $g'$.
We remark that $|\alpha|_{(g')^{\flat}}$ is equal to
the operator norm $\lVert\alpha\rVert$
of the linear form $\alpha$ on $TM'$
with respect to $g'$,
where the norm $\lVert\alpha\rVert$ is taken pointwise.

%%%%%%%%%%%%%%%%%%%%%%%%%%%%%%%%%%%

\subsection{The induced complex structure\label{subsec:theindcomstr}}

The equation
\[
\rmd\alpha=g'\big(\Phi(\,.\,),\,.\,\big)
\qquad\text{on}\quad\xi
\]
determines a skew adjoint vector bundle isomorphism
$\Phi\co\xi\ra\xi$ whose square multiplied by $-1$
is self adjoint and positive definite.
By \cite[Proposition 1.2.4]{hum97}
the bundle endomorphism given by
\[
j:=\Phi\circ\big(\sqrt{-\Phi^2}\big)^{-1}
\]
is a complex structure on $\xi$
that is compatible with $\rmd\alpha$,
i.e.\
\[
g_j:=\rmd\alpha(\,.\,,j\,.\,)
\]
defines a bundle metric on $\xi$.

\begin{lem}
\label{unifequiv1}
 The norm $|\,.\,|_j$ induced by $g_j$
 and the restriction of the norm $|\,.\,|_{g'}$
 to $\xi$ are uniformly equivalent,
 i.e.\ there exist constants $c_1,c_2>0$
 such that
 \[
 \frac{1}{c_1}|\,.\,|_{g'}
 \leq|\,.\,|_j\leq
 c_2|\,.\,|_{g'}
 \]
 on $\xi$.
\end{lem}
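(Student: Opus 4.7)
The plan is to reduce the bi-Lipschitz comparison to a uniform two-sided spectral estimate on the positive self-adjoint operator $S:=\sqrt{-\Phi^2}$ on $\xi$, and then to derive that estimate from the compactness of $M$ via $\rmd\alpha=\pi^*\omega$. A direct computation, using the skew-adjointness of $\Phi$ with respect to $g'$ and the fact that $S$ commutes with $\Phi$, gives for every $v\in\xi$
\[
|v|_j^2=\rmd\alpha(v,jv)=g'(\Phi v,\Phi S^{-1}v)=-g'(\Phi^2v,S^{-1}v)=g'(Sv,v).
\]
Hence the lemma is equivalent to the assertion that the eigenvalues of $S$ lie in a fixed compact subinterval of $(0,\infty)$ that is independent of the base point in $M'$.

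The upper bound on the spectrum of $S$ is immediate, since the $g'$-operator norm of $\Phi$ on $\xi$ is bounded by $|\rmd\alpha|_{g'}=|\pi^*\omega|_{g'}$, which is uniform because $\omega$ is smooth on the compact $M$. The lower bound is the core of the argument, and I would extract it by first estimating $\det(-\Phi^2)$ from below and then combining with the individual eigenvalue upper bound. To this end I would choose a $\Phi$-adapted $g'$-orthonormal frame $(e_1,\ldots,e_{2n-2})$ of $\xi$ in which $\Phi$ takes block-diagonal form with $2\times 2$ blocks $\left(\begin{smallmatrix}0&\lambda_k\\-\lambda_k&0\end{smallmatrix}\right)$, and complete it by $\tilde e_0:=\proj_{\perp}\!R/|\proj_{\perp}\!R|_{g'}$ to an orthonormal frame of $TM'$. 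Because $\iota_R\rmd\alpha=0$, the form $(\rmd\alpha)^{n-1}$ depends only on the $\xi$-component in the splitting $\R R\oplus\xi$ of each argument; that of $\tilde e_0$ equals $-R_\xi/|\proj_{\perp}\!R|_{g'}$, where $R_\xi$ denotes the $g'$-projection of $R$ onto $\xi$. A direct expansion then produces
\[
\big|(\rmd\alpha)^{n-1}\big|_{g'}^2=\big((n-1)!\big)^2\Big(\prod_{k=1}^{n-1}\lambda_k\Big)^2\cdot\frac{|R|_{g'}^2}{|\proj_{\perp}\!R|_{g'}^2}.
\]
Since $(\rmd\alpha)^{n-1}=\pi^*\omega^{n-1}$ and $\omega^{n-1}$ is nowhere vanishing on the compact $M$ (as $\omega$ has constant rank $2n-2$), the left-hand side is bounded above and below by positive constants on $M'$. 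Combined with the bounds $|R|_{g'}\le 1/c$ and $|\proj_{\perp}\!R|_{g'}\ge 1/C$ from Section~\ref{subsec:achar}, this forces $\prod_k\lambda_k^2$ to be bounded below by a positive constant. Paired with the uniform upper bound on each $\lambda_k^2$, every $\lambda_k^2$ must then exceed some $\delta^2>0$, and the claim for $S$ follows.

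The main difficulty lies in the determinant lower bound, and its subtlety is geometric: the Reeb line $\R R$ and the $g'$-orthogonal complement $\xi^\perp$ are in general two distinct one-dimensional transversals to $\xi$. It is precisely this mismatch that produces the extra factor $|R|_{g'}/|\proj_{\perp}\!R|_{g'}$ in the displayed identity, and that factor is what allows the quantitative characterisation in Section~\ref{subsec:achar} to convert the non-degeneracy of $\pi^*\omega^{n-1}$ into the required uniform non-degeneracy of $\Phi$.
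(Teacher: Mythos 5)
Your argument is correct, and it takes a genuinely different route to the eigenvalue bounds. Both proofs begin identically by reducing the lemma to showing that the spectrum of $S=\sqrt{-\Phi^2}$ lies in a compact interval $[\delta,\Delta]\subset(0,\infty)$ uniformly over $M'$, via the identity $g_j=g'(S\,\cdot\,,\cdot)$. From there, the paper works pointwise at the level of $\Phi$ itself: it pulls the function $v\mapsto|\Omega(v)|_{g^\flat}$ (where $\Omega(v)=\iota_v\omega$) back from a compact cone $C\cap STM$ that, by the quantitative characterisation of Section~\ref{subsec:achar}, can be arranged to contain $T\pi(\xi)$. Compactness gives uniform two-sided bounds on $|\Phi v|_{g'}$ for unit $v\in\xi$, these chain to two-sided bounds on $|\Phi^2 v|_{g'}$, and substituting unit eigenvectors of $-\Phi^2$ yields the eigenvalue bounds directly. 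Your proof instead treats the two bounds asymmetrically: the upper bound on $\lambda_{\max}(S)$ follows at once from the uniform pointwise norm of $\rmd\alpha=\pi^*\omega$, while the lower bound is extracted from $\det(-\Phi^2)$ via the clean pointwise identity $\big|(\rmd\alpha)^{n-1}\big|_{g'}^2=\big((n-1)!\big)^2\big(\prod_k\lambda_k\big)^2\,|R|_{g'}^2/|\proj_{\perp}\!R|_{g'}^2$ in a $\Phi$-adapted orthonormal frame, together with the nowhere-vanishing of $\omega^{n-1}$ on compact $M$ and the bounds $|R|_{g'}\le1/c$, $|\proj_{\perp}\!R|_{g'}\ge1/C$. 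A product lower bound plus individual upper bounds then forces each $\lambda_k^2\ge\delta^2$. What the paper's route buys is brevity and the avoidance of any frame computation; it also gives pointwise operator-norm bounds on $\Phi$ and $\Phi^{-1}$ directly, which is the form most naturally plugged into $g_j=g'(S\,\cdot\,,\cdot)$. What your route buys is a transparent separation of the two inputs -- compactness enters only through $\sup_M|\omega|_g$, $\inf_M|\omega^{n-1}|_g$, and the two constants $c,C$ -- and it makes the role of the mismatch between the Reeb transversal $\R R$ and the metric transversal $\xi^\perp$ geometrically explicit through the factor $|R|_{g'}/|\proj_\perp R|_{g'}$, rather than hiding it in the choice of the cone angle $c_0$. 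Both are valid; yours is a legitimate alternative proof rather than a rephrasing.
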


\begin{proof}
  Denote by $\lVert\Phi^{-1}\rVert$ and $\lVert\Phi\rVert$
  pointwise operator norms with respect to $g'$,
  which equal $1$ over the smallest resp.\ the largest
  pointwise eigenvalue of $\sqrt{-\Phi^2}$.
  Observe that
  \[
  g_j=g'\Big(\sqrt{-\Phi^2}(\,.\,),\,.\,\Big)
  \qquad\text{on}\quad\xi\,,
  \]
  so that
  \[
  \frac{1}{\sqrt{\lVert\Phi^{-1}\rVert}}\;|\,.\,|_{g'}
  \leq
  |\,.\,|_j
  \leq
  \sqrt{\lVert\Phi\rVert}\;|\,.\,|_{g'}
  \qquad\text{on}\quad\xi\,.
  \]
  Therefore,
  we have to show
  that the eigenvalues of $-\Phi^2$
  are uniformly positive and uniformly bounded.
  
  Let $\Omega\co TM\ra T^*M$
  be the linear bundle map given by
  $v\mapsto i_v\omega$.
  The characteristic line bundle of $(M,\omega)$
  is equal to $\ker\Omega$.
  For a constant $c_0>0$
  we define the convex subbundle
  $C$ of $TM$ to be the set of all tangent vectors
  $v\in TM$ whose angle with $\ker\Omega$
  with respect to $g$
  is greater or equal than $c_0$.
  In view of the alternative characterization
  of the virtually contact property given
  in Section \ref{subsec:achar}
  we can choose $c_0$ so small
  such that
  \[
  T\pi(\xi)\subset C\,.
  \]
  We consider the map
  \[
  TM\ni v\longmapsto|\Omega(v)|_{g^{\flat}}\in[0,\infty)\,,
  \]
  where $g^{\flat}$ denotes the dual metric of $g$.
  This map is uniformly bounded
  from above and away from zero
  on the compact set $C\cap STM$,
  where $STM$ denotes the unit tangent bundle
  of $M$ with respect to $g$.
  We define a cone like subbundle $C'$ of $TM'$
  to be the preimage of $C$ under $T\pi$
  with respect to $g'$
  that contains $\xi$
  such that
  \[
  C'\cap STM'\ni v\longmapsto|\Omega'(v)|_{(g')^{\flat}}\in(0,\infty)
  \]
  has uniform upper and lower bounds,
  where $\Omega'\co TM'\ra T^*M'$
  is the corresponding map
  $v\mapsto i_v\rmd\alpha$.
  Because of
  \[
  |\Omega'(v)|_{(g')^{\flat}}=|\Phi(v)|_{g'}
  \]
  for all $v\in\xi$
  we obtain that
  the map
  \[
  \xi\cap STM'\ni v\longmapsto|\Phi(v)|_{g'}\in(0,\infty)
  \]
  is uniformly bounded from above and away from zero.
  With
  \[
  \big|\Phi^2(v)\big|_{g'}=
  |\Phi(v)|_{g'}\;
  \left|
  \Phi
  \left(
  \frac{\Phi(v)}{\;\;|\Phi(v)|_{g'}}
  \right)
  \right|_{g'}
  \]
  we get similar bounds for
  \[
  \xi\cap STM'\ni v\longmapsto\big|\Phi^2(v)\big|_{g'}\in(0,\infty)\,.
  \]
  Inserting all possible eigenvectors $v\in\xi$
  of unit length with respect to $g'$
  we see that all eigenvalues of $-\Phi^2$
  are uniformly positive and uniformly bounded.
  This proves the lemma.
\end{proof}

%%%%%%%%%%%%%%%%%%%%%%%%%%%%%%%%%%%

\subsection{Bounding the geometry\label{subsec:boundthegeom}}

On the covering space $M'$
we define a second Riemannian metric
by setting
\[
g_{\alpha}=\alpha\otimes\alpha+g_j
\]
with respect to the splitting $\R R\oplus\xi$.
We denote the projection of $TM'$ onto $\xi$
along the Reeb vector field by $\pi_{\xi}$.

\begin{lem}
\label{unifequiv2}
 The norm $|\,.\,|_{\alpha}$ induced by $g_{\alpha}$
 and the norm $|\,.\,|_{g'}$
 are uniformly equivalent on $M'$,
 i.e.\ there exist constants $c_1,c_2>0$
 such that
 \[
 \frac{1}{c_1}|\,.\,|_{g'}
 \leq|\,.\,|_{\alpha}\leq
 c_2|\,.\,|_{g'}\;.
 \]
\end{lem}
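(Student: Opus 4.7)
The plan is to exploit the decomposition $v=\alpha(v)R+\pi_\xi(v)$ coming from the splitting $TM'=\R R\oplus\xi$ and then reduce the comparison between $|\,.\,|_\alpha$ and $|\,.\,|_{g'}$ to two already-controlled quantities: the norms of $\alpha$ and $R$ with respect to $g'$, which are bounded by the characterization in Section~\ref{subsec:achar}, and the restriction of $|\,.\,|_j$ to $\xi$, which is uniformly equivalent to $|\,.\,|_{g'}\!\!\restriction_\xi$ by Lemma~\ref{unifequiv1}. Unpacking the definition gives $|v|_\alpha^2=\alpha(v)^2+|\pi_\xi(v)|_j^2$, so only these three ingredients are needed.

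For the upper bound I would estimate each summand separately. First, $|\alpha(v)|\leq\lVert\alpha\rVert\,|v|_{g'}\leq C|v|_{g'}$ by the definition of the operator norm and the uniform bound from Section~\ref{subsec:achar}. Second, since $\pi_\xi(v)=v-\alpha(v)R$, the triangle inequality in $(TM',g')$ combined with $|R|_{g'}\leq 1/c$ and the previous estimate yield $|\pi_\xi(v)|_{g'}\leq(1+C/c)|v|_{g'}$. Applying the upper bound of Lemma~\ref{unifequiv1} to $\pi_\xi(v)\in\xi$ then controls $|\pi_\xi(v)|_j$ by $|v|_{g'}$. Squaring and adding produces $|v|_\alpha\leq c_2|v|_{g'}$ with an explicit constant depending on $c$, $C$, and the constant from Lemma~\ref{unifequiv1}.

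For the lower bound I would go the other way and estimate $|v|_{g'}=|\alpha(v)R+\pi_\xi(v)|_{g'}\leq|\alpha(v)|\,|R|_{g'}+|\pi_\xi(v)|_{g'}\leq \frac{1}{c}|\alpha(v)|+c_1'|\pi_\xi(v)|_j$, where $c_1'$ is the lower-bound constant from Lemma~\ref{unifequiv1}. Using the elementary inequality $a+b\leq\sqrt{2}\sqrt{a^2+b^2}$ converts the right-hand side into a multiple of $\sqrt{\alpha(v)^2+|\pi_\xi(v)|_j^2}=|v|_\alpha$, giving $|v|_{g'}\leq c_1|v|_\alpha$.

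There is no real obstacle here; the lemma is a routine consequence of Lemma~\ref{unifequiv1} and the virtually contact bounds reformulated in Section~\ref{subsec:achar}. The only point worth being slightly careful about is that the oblique projection $\pi_\xi$ along $R$ is not the same as the orthogonal projection $\proj_\perp$ onto $\xi^\perp$, so one must control $\pi_\xi$ directly through the identity $\pi_\xi(v)=v-\alpha(v)R$ and the bound $|R|_{g'}\leq 1/c$, rather than invoking $\proj_\perp$. Once this is observed, both inequalities follow at once with constants depending only on $c$, $C$, and those provided by Lemma~\ref{unifequiv1}.
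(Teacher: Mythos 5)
Your proof is correct, and it achieves the same end as the paper's but through slightly different mechanics that are worth noting. Both proofs begin by decomposing $v$ along $\R R\oplus\xi$ and both combine Lemma~\ref{unifequiv1} with the bounds $\lVert\alpha\rVert\leq C$ and $|R|_{g'}\leq 1/c$ from Section~\ref{subsec:achar}. The differences are in the bookkeeping. For the upper bound, the paper estimates $|\pi_\xi v|_j\leq c_2\lVert\pi_\xi\rVert\,|v|_{g'}$ and then bounds the operator norm $\lVert\pi_\xi\rVert$ geometrically via $\lVert\pi_\xi|_p\rVert=1/\sin\measuredangle_{g'}(R_p,\xi_p)$, invoking the uniform lower bound on that angle. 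You instead expand $\pi_\xi(v)=v-\alpha(v)R$ and apply the triangle inequality, which gives $\lVert\pi_\xi\rVert\leq 1+C/c$ directly; this is the more elementary route and arguably the cleaner one, since it avoids the explicit angle computation. For the lower bound, the paper bounds $|v|_\alpha^2$ from below using \eqref{lowerbound} and the parallelogram identity; you bound $|v|_{g'}$ from above using the triangle inequality and $a+b\leq\sqrt{2}\sqrt{a^2+b^2}$. Both yield a constant depending only on $c$, $C$, and the constants of Lemma~\ref{unifequiv1}. One small presentational point: when applying $a+b\leq\sqrt{2}\sqrt{a^2+b^2}$ to $\frac{1}{c}|\alpha(v)|+c_1|\pi_\xi(v)|_j$, the constants $1/c$ and $c_1$ sit inside $a$ and $b$, so you should extract $\max(1/c,c_1)$ explicitly before identifying the square root with $|v|_\alpha$; this is a one-line fix, not a gap.
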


\begin{proof}
 We begin with the first inequality:
 Decompose any given tangent vector
 $v\in TM'$ into $v=v^1R+Y$
 with respect to $\R R\oplus\xi$.
 Then
 \[
 |v|^2_{\alpha}=\big|\alpha(v^1R)\big|^2+|Y|^2_j\,.
 \]
 With \eqref{lowerbound} and Lemma \ref{unifequiv1}
 we obtain the lower estimate
 \[
 |v|^2_{\alpha}
 \geq
 \min\!\left(c^2,\frac{1}{c^2_1}\right)\cdot
 \Big(|v^1R|^2_{g'}+|Y|^2_{g'}\Big)\,.
 \]
 Using the parallelogram identity
 $2\big(|x|^2+|y|^2\big)=|x+y|^2+|x-y|^2$
 this leads to
 \[
 |v|^2_{\alpha}
 \geq
 \frac12\min\!\left(c^2,\frac{1}{c^2_1}\right)
 \cdot |v|^2_{g'}
 \]
 proving the first inequality.
 
 In order to show the second inequality observe
 that
 \[
 |v|^2_{\alpha}=\big|\alpha(v)\big|^2+|\pi_{\xi}v|^2_j\,.
 \]
 Using the boundedness of $\alpha$ and Lemma \ref{unifequiv1}
 this gives
 \[
 |v|^2_{\alpha}
 \leq
 \max\!\left(C^2,c^2_2\lVert\pi_{\xi}\rVert^2\right)\cdot|v|^2_{g'}\,.
 \]
 It remains to prove finiteness
 of the supremum of all pointwise operator norms
 $\lVert\pi_{\xi}\rVert$
 with respect to $g'$,
 where the supremum is taken over all points of  $M'$.
 For $p\in M'$
 let $u$ be a unit tangent vector with respect to $g'$
 that is either contained in $\xi$ if $R_p\in\xi^{\perp}$ or
 lies in the span of $\R R$ and $\xi^{\perp}$ at $p$
 being perpendicular to $R_p$
 and pointing in the same co-orientation direction of $\xi$
 as $R_p$.
 The norm $\lVert\pi_{\xi}|_p\rVert$
 is the length $|Y|_{g'}$ of the vector
 $Y=\pi_{\xi}u$ in $T_pM'$.
 Consequently,
 \[
 \lVert\pi_{\xi}|_p\rVert=
 \frac{1}{\sin\measuredangle_{g'}(R_p,\xi_p)}\,,
 \]
 which is uniformly bounded
 as the angle $\measuredangle_{g'}(R_p,\xi_p)$
 between $R_p$ and $\xi_p$
 stays uniformly away from zero
 according to the alternative formulation
 of the virtually contact property
 given in Section \ref{subsec:achar}.
 In other words,
 the supremum of all $\lVert\pi_{\xi}|_p\rVert$,
 $p\in M'$, is finite proving the second inequality.
\end{proof}

%%%%%%%%%%%%%%%%%%%%%%%%%%%%%%%%%%%

\subsection{Length and area\label{subsec:landa}}

The norms of $g_{\alpha}$ and $g'$
are uniformly equivalent by Lemma \ref{unifequiv2}
so that we can formulate isoperimetric type inequalities
with respect to either metric.

By definition the metric $g'$ is locally isometric to $g$
via the covering $\pi\co M'\ra M$.
The compactness of $M$ implies
that $g'$ is of bounded geometry.
In particular,
the absolute values of the sectional curvature of $g'$
are uniformly bounded
and the injectivity radius of $g'$
is uniformly bounded away from zero
by, say, $2i_0>0$.
Further,
the metric $g'$ is complete.
Moreover,
for all $p\in M'$ the exponential map
$\exp_p$ is defined for all tangent vectors
$v\in T_pM'$ of length $|v|_{g'}<i_0$
inducing a diffeomorphism
\[
T_pM'\supset B_{i_0}\!(0)\lra B_{i_0}\!(p)\subset M'
\]
onto the geodesic ball $B_{i_0}\!(p)$ with respect to $g'$.
Denote by $\EE_p$ the restriction of $\exp_p$ to $B_{i_0}\!(0)$.
By \cite[p.~318]{pet06} the linearization of $\EE_p$
and its inverse $\EE^{-1}_p$ are uniformly bounded,
i.e.\ there exists a constant $C>0$ such that for all $p\in M'$
\[
\lVert T\EE_p\rVert\,,\; \lVert T\EE^{-1}_p\rVert<C\,,
\]
where the operator norm is taken pointwise with respect to $g'$.

We will use this to formulate an isoperimetric inequality
for smooth loops that are contained in a geodesic ball of radius $i_0$.
Let $c\co\R\ra M'$ be a $2\pi$-periodic map 
with image in $B_{i_0}\!\big(c(0)\big)$.
Define a loop $X$ of tangent vectors in $T_{c(0)}M'$ via
$\exp_{c(0)}\!X(\theta)=c(\theta)$.
This defines a disc map $f_c\co D^2\ra M'$ via
\[
f_c(r\rme^{\rmi\theta})=\exp_{c(0)}\!\big(rX(\theta)\big)\,,
\]
where we use polar coordinates $z=r\rme^{\rmi\theta}$
on the closed unit disc $D^2$.
In view of the above first order bounds
on $\EE_{c(0)}$ and $\EE^{-1}_{c(0)}$
we obtain
\begin{equation}
\label{rderivative}
\big|\partial_rf_c(r\rme^{\rmi\theta})\big|_{g'}
\leq\frac{C}{2}\length_{g'}(c)\;,
\end{equation}
where
\[
\length_{g'}(c)=\int_0^{2\pi}|\dot{c}(\theta)|_{g'}\rmd\theta\;,
\]
and
\begin{equation}
\label{thetaderivative}
\big|\partial_{\theta}f_c(r\rme^{\rmi\theta})\big|_{g'}
\leq C^2|\dot{c}(\theta)|_{g'}\;.
\end{equation}
Therefore,
we can estimate the area
\[
\Area_{g'}\!\big(f_c(D^2)\big)=
\int_{(0,1)\times(0,2\pi)}
\sqrt{\det(f^*_cg')_{ij}}
\;\;\rmd r\wedge\rmd\theta
\]
of the disc $f_c(D^2)$
\[
\Area_{g'}\!\big(f_c(D^2)\big)
\leq
\frac{C^3}{2}\Big(\length_{g'}(c)\Big)^2\,.
\]
Observe,
that a similar inequality holds
if area and length are measured
with respect to the metric $g_{\alpha}$.

%%%%%%%%%%%%%%%%%%%%%%%%%%%%%%%%%%%
%%%%%%%%%%%%%%%%%%%%%%%%%%%%%%%%%%%

\section{Holomorphic discs\label{sec:holomdiscs}}

Holomorphic discs in symplectisations
$\R\times M'$
cannot only escape to $-\infty$ in the $\R$-direction
but they need $C^0$-control in the $M'$-directions too.
In this section
we will use monotonicity type phenomena
of holomorphic discs
to obtain distance estimates in $M'$-directions
in terms of symplectic energy and $\R$-distance bounds.

%%%%%%%%%%%%%%%%%%%%%%%%%%%%%%%%%%%

\subsection{An almost complex structure\label{subsec:almostcpxstr}}

On $\R\times M'$ we define an almost complex structure $J$
that is translation invariant, restricts to $j$ on $\xi$,
and sends $\partial_t$ to $R$ denoting by $t$ the
$\R$-coordinate.
We consider a holomorphic disc,
which is a smooth map
$u\co\D\ra\R\times M'$ such that
$Tu\circ\rmi=J(u)\circ Tu$,
where $\D$ denotes the closed unit disc in $\C$
equipped with the complex structure $\rmi$.
We will assume
that the following boundary condition is satisfied
$u(\partial\D)\subset\{0\}\times M'$.
Writing $u=(a,f)$ holomorphicity of $u$
can be expressed as
\[
\begin{cases}
\;\,-\rmd a\circ\rmi=f^*\alpha\,,\\
\pi_{\xi}Tf\circ\rmi=j(f)\circ\pi_{\xi}Tf\,.
\end{cases}
\]
In particular,
$a\co\D\ra\R$ is a subharmonic function due to the choice of $j$.
The maximum principle implies that $u(\D)$
is contained in $(-\infty,0]\times M'$.
Moreover,
\[
u^*(\rmd t\wedge\alpha)=
\big(a^2_x+a^2_y\big)\,\rmd x\wedge\rmd y
\]
and
\[
f^*\rmd\alpha=
\frac12\Big(|f_x|^2_{g_j}+|f_y|^2_{g_j}\Big)\,\rmd x\wedge\rmd y\;.
\]
Observe that both expressions are non-negative.

%%%%%%%%%%%%%%%%%%%%%%%%%%%%%%%%%%%

\subsection{Area growth\label{subsec:areagrowth}}

For $p\in M'$ and $t\in(0,i_0)$
we consider the solid cylinder
$\R\times B_t(p)$ over the open
geodesic ball $B_t(p)$ with respect to $g'$
and denote the preimage
of the intersection with the holomorphic disc by
\[
G_t=
u^{-1}\big(\R\times B_t(p)\big)=
f^{-1}(B_t(p)\big)\,.
\]
We assume that
\[
f(\partial\D)\subset M'\setminus B_{i_0}\!(p)
\]
so that $G_t$ is disjoint
from the boundary $\partial\D$ of $\D$.
The radial distance function of $g'$ at $p$
is denoted by
\[
r\co B_{i_0}\!(p)\lra[0,i_0)\quad\text{;}\qquad
x\longmapsto
\dist_{g'}(p,x)\;.
\]
Notice,
that using the Gau{\ss}-lemma 
the pointwise operator norm of
$Tr$ with respect to $g'$
equals $\lVert Tr\lVert=1$,
see \cite[Lemma 6.12]{pet06}.
The restriction of the radial distance function
to the holomorphic disc is denoted by
\[
F\co G_{i_0}\lra[0,i_0)\quad\text{;}\qquad
z\longmapsto
r\big(f(z)\big)\;.
\]
With this notation introduced we see
that $\partial G_t=F^{-1}(t)$.
Denote by $\Reg\subset(0,i_0)$
the set of regular values of $F$
that are not contained in the image
$r\big(\{\pi_{\xi} Tf=0\}\big)$,
which is a finite set,
see \cite[Lemma 7]{gz16b}.
Observe that $f$ has no critical points
on $F^{-1}(\Reg)$.
Let $h$ be a Riemannian metric on $F^{-1}(\Reg)\subset\C$
for which there exists a universal constant
$C_0>0$ that is independent of the choice of $p\in M'$
satisfying
\[
\big|\grad_hF\big|_h\leq\frac{1}{C_0}\;.
\]
In order to find such a universal constant $C_0$
we remark that
\[
\big|\!\grad_hF\big|_h^2=
\rmd F\big(\!\grad_hF\big)\;.
\]
Since the Gau{\ss} lemma implies $\lVert Tr\lVert=1$
we see that for all $v\in TG_t$
\begin{equation}
\label{choosev}
|\rmd F(v)|_h\leq|Tf(v)|_{g'}
\end{equation}
applying the chain rule to $F=r\circ f$.

With the co-area type arguments
in the proof of the monotonicity lemma
given in \cite[p.~27/28]{hum97} we obtain:

\begin{lem}
 \label{derivativeofarea}
 For all $t\in\Reg$ the $t$-derivative of the area of $G_t$
 exists and satisfies
 \[
 \big(\!\Area_h(G_t)\big)'
 \geq C_0\,\length_h(\partial G_t)\;.
 \]
\end{lem}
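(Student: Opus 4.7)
The plan is to apply the co-area formula to the function $F\co G_{i_0}\ra[0,i_0)$ and then invoke the pointwise upper bound $|\grad_h F|_h\leq 1/C_0$ assumed in the hypothesis.

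First I would fix $t\in\Reg$ and observe that, by the definition of $\Reg$, the map $F$ has no critical points on a neighborhood of the level set $F^{-1}(t)$ in $G_{i_0}$, and moreover $f$ has no critical points there so that $\grad_h F$ is nowhere zero on $F^{-1}(\Reg)$. In particular $\partial G_t=F^{-1}(t)$ is a smooth compact $1$-submanifold of $\C$ (it is disjoint from $\partial\D$ by the assumption $f(\partial\D)\subset M'\setminus B_{i_0}(p)$), so its $h$-length is well defined. The smooth co-area formula then yields
\[
\Area_h(G_t)=\int_0^t\!\left(\int_{F^{-1}(s)}\frac{1}{|\grad_h F|_h}\,d\ell_h\right)ds
\]
for $t\in\Reg$, where $d\ell_h$ is the $h$-arc-length element on the level curve. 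By Sard's theorem almost every $s$ is regular, and near the regular value $t$ the inner integral is a continuous function of $s$ since the level sets vary smoothly and $\grad_h F$ is bounded and bounded away from zero there.

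Differentiating at $t\in\Reg$ using the fundamental theorem of calculus gives existence of the derivative and the identity
\[
\bigl(\Area_h(G_t)\bigr)'=\int_{\partial G_t}\frac{1}{|\grad_h F|_h}\,d\ell_h\,.
\]
Applying the assumed bound $|\grad_h F|_h\leq 1/C_0$ pointwise on $F^{-1}(\Reg)\supset\partial G_t$ gives $1/|\grad_h F|_h\geq C_0$, and integrating over $\partial G_t$ yields the claim
\[
\bigl(\Area_h(G_t)\bigr)'\geq C_0\int_{\partial G_t}d\ell_h=C_0\,\length_h(\partial G_t)\,.
\]

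The only slightly delicate point is the justification of the co-area formula and its differentiation in the present (non-compact, manifold with corners) setting; this is routine because we work with the smooth proper function $F$ restricted to a neighborhood of the compact level $F^{-1}(t)$ in $G_{i_0}$, away from the finite exceptional set and from $\partial\D$, so the hypotheses of the standard smooth co-area theorem are satisfied. This is precisely the viewpoint of the monotonicity argument in \cite[p.~27/28]{hum97} that the statement of the lemma refers to, and no further estimate is required.
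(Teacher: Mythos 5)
Your proof is correct and follows essentially the same route the paper takes: the cited co-area argument of Hummel (p.~27/28) is precisely the chain ``write $\Area_h(G_t)$ via the co-area formula, differentiate at a regular value using the fundamental theorem of calculus, and bound the integrand $1/|\grad_h F|_h$ from below by $C_0$'' that you carry out. The only point worth being slightly more explicit about is that $\Reg$ is an open subset of $(0,i_0)$ (regular values are stable and the excluded set $r(\{\pi_\xi Tf=0\})$ is finite), which is exactly what guarantees the continuity of the inner integral near $t$ and hence differentiability at every $t\in\Reg$ rather than merely almost everywhere.
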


%%%%%%%%%%%%%%%%%%%%%%%%%%%%%%%%%%%

\subsection{Symplectisation\label{subsec:symplectisation}}

Denote by $\TT$ the set of all smooth
strictly increasing functions
$\tau\co (-\infty,0]\ra[0,1]$ with $\tau(0)=1$.
Any $\tau\in\TT$ defines a symplectic form
\[
\rmd(\tau\alpha)=\tau'\rmd t\wedge\alpha+\tau\rmd\alpha
\]
on $\R\times M'$.
The almost complex structure $J$
is compatible with $\rmd(\tau\alpha)$
defining a metric
\[
g_{\tau}=\rmd(\tau\alpha)(\,.\,,J\,.\,)=
\tau'\big(\rmd t\otimes\rmd t+\alpha\otimes\alpha\big)+
\tau g_j\;.
\]
The holomorphicity of $u$ implies
that $u^*g_{\tau}$ defines a conformal metric
$h$ on $F^{-1}(\Reg)$.
Inserting
\[
v=\frac{\grad_hF}{\;\big|\!\grad_hF\big|_h}
\]
into \eqref{choosev}
with that choice of $h$
yields the universal constant $C_0$
required in Lemma \ref{derivativeofarea}
via
\[
\big|\!\grad_hF\big|_h\leq
c_1\,\max_{a(\D)}
\left(\frac{1}{\,\tau'},\frac{1}{\tau}\right)\;,
\]
where $c_1$ is a constant from Lemma \ref{unifequiv2}.
Notice, that the right hand side
depends on the holomorphic disc $u=(a,f)$.

%%%%%%%%%%%%%%%%%%%%%%%%%%%%%%%%%%%

\subsection{An isoperimetric inequality\label{subsec:isoperiineq}}

We continue the discussion from
Section \ref{subsec:areagrowth}.
For $\tau\in\TT$ we will study the area
\[
A(t)=\int_{G_t}u^*\rmd(\tau\alpha)\,,
\]
$t\in[0,i_0)$,
which is cut out by the solid cylinder
$\R\times B_t(p)$ about the
$g'$-geodesic ball $B_t(p)$ in $M'$.
As in Section \ref{subsec:areagrowth}
we only allow those geodesic balls
that do not hit the boundary $f(\partial\D)$.
The length of the maybe disconnected
curves $u(\partial G_t)$,
to which we will compare the area $A(t)$,
is measured with respect to the metric
\[
g_0=\rmd t\otimes\rmd t+g_{\alpha}\,.
\]

\begin{lem}
\label{lem:isoperineqinsympl}
 There exists a positive constant $c_3$,
 which only depends on the geometry of $(M',g')$,
 such that
 \[
 A(t)\leq
 c_3\,\Big(1+\max_{G_t}\big(\tau'(a)\big)\Big)\,
 \Big(\length_{g_0}\!\big(u(\partial G_t)\big)\Big)^2\;,
 \]
 where
 $\length_{g_0}\!\big(u(\partial G_t)\big)$
 is the sum of the lengths of the components.
\end{lem}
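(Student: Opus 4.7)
The plan is to reduce $A(t)$ to a boundary integral via Stokes, fill each boundary loop by a small disc built from the exponential map of $g'$ on the $M'$-factor together with a linear interpolation on the $\R$-factor, and then apply Stokes once more in combination with the derivative estimates \eqref{rderivative}, \eqref{thetaderivative} from Section \ref{subsec:landa}.

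Restrict first to $t\in\Reg$, so that $\partial G_t$ is a disjoint union of smooth loops $c_1,\ldots,c_N$ lying in the interior of $\D$ (they cannot meet $\partial\D$ because $f(\partial\D)\cap B_{i_0}(p)=\emptyset$). Stokes on $\D$ yields
\[
A(t)=\sum_{i=1}^N\int_{c_i}u^*(\tau\alpha)\,,
\]
so it suffices to bound each summand by a constant multiple of $(1+\max_{G_t}\tau'(a))\,L_i^2$, where $L_i=\length_{g_0}(u(c_i))$; summing, using $\sum_iL_i^2\leq(\sum_iL_i)^2$, and passing from $\Reg$ to arbitrary $t$ by continuity will then complete the proof.

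Fix a threshold $L_0>0$, depending only on $i_0$ and the constant $c_1$ of Lemma \ref{unifequiv2}, such that $L_i\leq L_0$ forces $\length_{g'}(f\circ c_i)<2i_0$ and hence $f(c_i)\subset B_{i_0}(f(c_i(0)))$. For such short $c_i$, apply the construction of Section \ref{subsec:landa} to obtain $\tilde F_i\co D^2\to M'$ filling $f\circ c_i$ and satisfying \eqref{rderivative}, \eqref{thetaderivative}. Pair it with the affine interpolation $\tilde A_i(r\rme^{\rmi\theta})=(1-r)a(c_i(0))+ra(c_i(\theta))$ in the $\R$-factor to form a smooth disc $\Sigma_i=(\tilde A_i,\tilde F_i)\co D^2\to\R\times M'$ filling $u\circ c_i$. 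A second application of Stokes together with the splitting $\rmd(\tau\alpha)=\tau'\rmd t\wedge\alpha+\tau\rmd\alpha$ gives
\[
\int_{c_i}u^*(\tau\alpha)=\int_{D^2}\tau'(\tilde A_i)\,\rmd\tilde A_i\wedge\tilde F_i^*\alpha+\int_{D^2}\tau(\tilde A_i)\,\tilde F_i^*\rmd\alpha\,.
\]
The second integrand is controlled by $\tau\leq1$, by the uniform boundedness of $|\rmd\alpha|$ inherited from the compactness of $M$, and by \eqref{rderivative}, \eqref{thetaderivative}, producing a bound $C'L_i^2$. For the first, the pointwise estimates $|\partial_r\tilde A_i|\leq L_i$ and $|\partial_\theta\tilde A_i|\leq r\,|\tfrac{\rmd}{\rmd\theta}(a\circ c_i)|$ combine with the $(g')^{\flat}$-boundedness of $\alpha$ and again \eqref{rderivative}, \eqref{thetaderivative} to give $\max_{G_t}\tau'(a)\cdot C''L_i^2$.

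For $L_i>L_0$, one uses instead the cruder estimate
\[
\int_{c_i}u^*(\tau\alpha)\leq\sup_{M'}|\alpha|_{(g')^{\flat}}\cdot\length_{g'}(f\circ c_i)\leq\frac{c_1\sup|\alpha|}{L_0}\,L_i^2\,,
\]
where $\tau\leq1$ and Lemma \ref{unifequiv2} convert $g_0$-length into $g'$-length. Combining the two regimes and summing over $i$ yields the claim with a universal constant $c_3$. The main subtlety lies in the short-loop case: to obtain a quadratic rather than linear dependence on $L_i$ one must track the interplay between the $\R$-variation of $a$ along $c_i$ and the angular derivatives of $\tilde F_i$ when estimating $\rmd\tilde A_i\wedge\tilde F_i^*\alpha$, and the $\tau'\rmd t\wedge\alpha$ summand of $\rmd(\tau\alpha)$ is precisely what produces the factor $1+\max_{G_t}\tau'(a)$ in the final bound.
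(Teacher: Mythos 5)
Your proof follows essentially the same route as the paper's: apply Stokes to convert $A(t)$ into a sum of boundary integrals, fill each boundary loop by the geodesic-exponential disc in $M'$ from Section~\ref{subsec:landa} paired with a convex interpolation in the $\R$-factor, apply Stokes again, decompose $\rmd(\tau\alpha)$ into the $\tau'\rmd t\wedge\alpha$ and $\tau\rmd\alpha$ summands, and bound each using $\tau\leq 1$, the uniform $(g')^{\flat}$-boundedness of $\alpha$, Lemma~\ref{unifequiv2}, and the first-order estimates \eqref{rderivative}, \eqref{thetaderivative}. The individual term estimates match what is in the paper.

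The one genuine addition is the threshold $L_0$ and the split into short and long loops. The paper applies the exponential filling without comment, even though a boundary component of $f(\partial G_t)$ is only guaranteed to lie in $B_t(p)\subset B_{i_0}(p)$ and hence may have diameter up to $2i_0$ around $f(c_\ell(0))$ rather than $i_0$ as demanded by the construction in Section~\ref{subsec:landa}; implicitly the paper is leaning on the fact that $\exp_{c(0)}$ is still a diffeomorphism on the $2i_0$-ball (since $2i_0$ is below the injectivity radius) with a correspondingly adjusted constant $C$. Your $L_0$ threshold, which forces $\length_{g'}(f\circ c_i)<2i_0$ and hence containment in $B_{i_0}(f(c_i(0)))$, sidesteps this tacit extension; the crude long-loop bound $\sup|\alpha|_{(g')^\flat}\cdot\length_{g'}(f\circ c_i)\leq\frac{c_1\sup|\alpha|}{L_0}L_i^2$ then handles the remaining components with the correct quadratic dependence. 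This is a small extra piece of bookkeeping that makes the argument more airtight without changing its structure; either route yields the same constant-quality bound, and both ultimately rest on the geometry of Section~\ref{subsec:landa} and the two-step Stokes reduction.
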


\begin{proof}
 Let $N$ be the number of boundary components of $G_t$.
 For the $\ell$-th component of $\partial G_t$ define a disc map
 $f_{\ell}\co D^2\ra M'$ via $g'$-geodesics along the
 $\ell$-th component of
 the curve $f(\partial G_t)$
 as examined in Section \ref{subsec:landa}.
 Similarly,
 disc maps $a_{\ell}\co D^2\ra\R$
 are defined via convex interpolations
 along the components of $a(\partial G_t)$.
 Choosing orientations appropriately
 Stokes' theorem implies
 \[
 A(t)=
 \int_{\partial G_t}u^*(\tau\alpha)=
 \sum_{\ell=1}^N\int_{D^2}(a_{\ell},f_{\ell})^*\rmd(\tau\alpha)\;.
 \]
 The integrand decomposes into
 \[
 \tau'(a_{\ell})\rmd a_{\ell}\wedge f_{\ell}^*\alpha+
 \tau(a_{\ell})f_{\ell}^*\rmd\alpha\;.
 \]
 We will tread the summands separately.
 Beginning with the second,
 which is bounded by $|f_{\ell}^*\rmd\alpha|$
 because of $\tau\leq1$,
 we find
 \[
 \int_{D^2}|f_{\ell}^*\rmd\alpha|
 \leq
 \frac{c_2^2C^3}{2}
 \Big(\length_{g'}\!\big(f_{\ell}(\partial D^2)\big)\Big)^2\
 \]
 for all $\ell=1,\ldots,N$
 using Lemma \ref{unifequiv2}
 and equations \eqref{rderivative} and \eqref{thetaderivative}
 from Section \ref{subsec:landa}.
 Hence,
 the sum of the $(a_{\ell},f_{\ell})^*(\tau\rmd\alpha)$--integrals
 is estimated by
 \[
 c_4\,\Big(\length_{g'}\!\big(f(\partial G_t)\big)\Big)^2
 \leq
 c_1^2c_4\,\Big(\length_{g_{\alpha}}\!\big(f(\partial G_t)\big)\Big)^2
 \]
 setting $c_4=c_2^2C^3/2$ and using Lemma \ref{unifequiv2} again.
 Similarly,
 invoking the boundedness of $\alpha$
 and equations \eqref{rderivative} and \eqref{thetaderivative} again
 we obtain
 \[
 \int_{D^2}\tau'(a_{\ell})\rmd a_{\ell}\wedge f_{\ell}^*\alpha
 \leq
 c_5\,\max_{G_t}\big(\tau'(a)\big)\,
 \Big(\length_{g_0}\!\big((a_{\ell},f_{\ell})(\partial D^2)\big)\Big)^2\
 \]
 for a positive constant $c_5$.
 Therefore,
 the sum of the $(a_{\ell},f_{\ell})^*(\tau'\rmd t\wedge\alpha)$--integrals
 is bounded by
 \[
 c_5\,\max_{G_t}\big(\tau'(a)\big)\,
 \Big(\length_{g_0}\!\big(u(\partial G_t)\big)\Big)^2\;.
 \]
 Combining both estimates proves the claim.
\end{proof}

%%%%%%%%%%%%%%%%%%%%%%%%%%%%%%%%%%%

\subsection{Monotonicity\label{subsec:monotonicity}}

For $h$ being induced by $u^*g_{\tau}$
as in Section \ref{subsec:symplectisation},
Lemma \ref{derivativeofarea} implies
that there exists a positive constant $c_6$
such that
\[
A'(t)\geq
\frac{c_6}{\left(
\max_{a(\D)}
\left(\frac{1}{\,\tau'},\frac{1}{\tau}\right)
\right)^2}
\length_{g_0}\!\big(u(\partial G_t)\big)
\]
for all $t\in\Reg$.
Combined with Lemma \ref{lem:isoperineqinsympl},
which says that
\[
\length_{g_0}\!\big(u(\partial G_t)\big)
\geq
\sqrt{
\frac{c_3^{-1}}
{1+\max_{a(\D)}(\tau')}
}\,
\sqrt{A(t)}
\;,
\]
we obtain
\begin{equation}
\label{diffequforarea}
A'(t)\geq2\,\mathrm{m}\,\sqrt{A(t)}
\;,
\end{equation}
for all $t\in\Reg$,
where
\[
\mathrm{m}=
\mathrm{m}\big(\tau(a)\big):=
\frac{c_6}{2\sqrt{c_3}}\;
\frac{1}{\left(
\max_{a(\D)}
\left(\frac{1}{\,\tau'},\frac{1}{\tau}\right)
\right)^2}\;
\sqrt{
\frac{1}
{1+\max_{a(\D)}(\tau')}
}
\;.
\]
With the reasoning on \cite[p.~28]{hum97}
equation \eqref{diffequforarea}
implies the following monotonicity lemma
in symplectisations as described in the present context:

\begin{prop}
\label{prop:monotonocitylemma}
 Let $u=(a,f)$ be a holomorphic disc map
 that sends $(\D,\partial\D)$ into
 $(\R\times M',\{0\}\times M')$.
 Let $p\in M'$ be a point on $f(\D)$
 such that the $g'$-geodesic ball $B_{i_0}\!(p)$
 and the boundary curve $f(\partial\D)$
 are disjoint.
 Then the area functional
 \[
 A(t)=\int_{f^{-1}\big(B_t(p)\big)}u^*\rmd(\tau\alpha)
 \]
 satisfies
 \[A(t)\geq\mathrm{m}^2\,t^2\]
 for all $t\in[0,i_0)$,
 where the constant
 $\mathrm{m}=\mathrm{m}\big(\tau(a)\big)$
 is positive and
 depends on the $\R$-coordinate
 of the holomorphic disc $u=(a,f)$.
\end{prop}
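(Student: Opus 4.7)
The plan is to integrate the first-order differential inequality \eqref{diffequforarea}, namely $A'(t) \geq 2\mathrm{m}\sqrt{A(t)}$ valid at every regular $t \in \Reg$, and combine it with $A(0) = 0$ to conclude $A(t) \geq \mathrm{m}^2 t^2$ on all of $[0,i_0)$.

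First I would dispose of the preliminary analytic points. The function $A$ is non-decreasing and continuous on $[0,i_0)$. The excluded set $[0,i_0) \setminus \Reg$ has Lebesgue measure zero, being the union of the critical values of the smooth map $F = r \circ f$ on $G_{i_0}$ (null by Sard) with the finite set $r(\{\pi_{\xi}Tf = 0\})$. The coarea formula applied to $F$ then shows that $A$ is absolutely continuous on $[0,i_0)$ with a.e.\ derivative satisfying \eqref{diffequforarea}. Moreover $A(0) = 0$, and for every $t > 0$ one has $A(t) > 0$: the hypothesis $f(\partial\D) \cap B_{i_0}(p) = \emptyset$ forces every preimage of $p$ under $f$ to lie in $\Int\D$, and continuity of $f$ puts a whole open neighbourhood of such a preimage inside $G_t$, where $u^*\rmd(\tau\alpha)$ is pointwise non-negative and strictly positive at interior preimages thanks to the holomorphicity formulae of Section \ref{subsec:almostcpxstr}.

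With these facts in place the remainder is a standard ODE comparison. For any $\delta > 0$, $A$ is bounded away from $0$ on $[\delta, i_0)$, so $\sqrt{A}$ is absolutely continuous there, and the chain rule together with \eqref{diffequforarea} yields
\[
\frac{d}{dt}\sqrt{A(t)} \;=\; \frac{A'(t)}{2\sqrt{A(t)}} \;\geq\; \mathrm{m}
\]
at almost every $t \geq \delta$. Integrating from $\delta$ to $t$ and then letting $\delta \to 0^+$, using $\sqrt{A(\delta)} \to 0$ by continuity of $A$, gives $\sqrt{A(t)} \geq \mathrm{m}\,t$ and hence the asserted quadratic bound after squaring. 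Positivity of $\mathrm{m}$ is visible from its defining formula, since $a(\D) \subset (-\infty,0]$ is compact and both $\tau$ and $\tau'$ are strictly positive on $(-\infty,0]$, so the maxima of $1/\tau$ and $1/\tau'$ over $a(\D)$ are finite.

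I do not expect a conceptual obstacle here, since all of the geometric and analytic work has already been absorbed into Lemmas \ref{derivativeofarea}--\ref{lem:isoperineqinsympl} and packaged into \eqref{diffequforarea}. The only subtle technical point is the upgrade from the pointwise-at-regular-values inequality to an a.e.\ inequality that survives integration; this is precisely the coarea/monotonicity argument of \cite[p.~28]{hum97} alluded to in the text preceding the proposition, and everything else reduces to comparing $\sqrt{A(t)}$ against the linear function $\mathrm{m}\,t$.
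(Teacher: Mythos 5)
Your argument is correct and coincides with the paper's, which also treats \eqref{diffequforarea} as the key differential inequality and invokes the standard integration argument of \cite[p.~28]{hum97}. The only stylistic quibble is the phrase about $u^*\rmd(\tau\alpha)$ being ``strictly positive at interior preimages'' --- such a preimage could be a critical point of $u$; what you actually need (and have) is that $u$ is non-constant holomorphic, so $\{Tu=0\}$ is discrete and the integrand is positive on a full-measure subset of the open set $G_t$, giving $A(t)>0$ for $t>0$.
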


%%%%%%%%%%%%%%%%%%%%%%%%%%%%%%%%%%%

\subsection{A distance estimate\label{subsec:adistestim}}

If we take $\tau(t)=\rme^t$
we obtain for the monotonicity constant $\mathrm{m}$
in Proposition \ref{prop:monotonocitylemma}
\[
\mathrm{m}(\rme^a)=c_7\,\rme^{-2\max_{\D}|a|}
\]
for a positive constant $c_7$.
We will use this in order to estimate the maximal distance
\[
\dist_{g'}\!\big(L,f(\D)\big)=
\sup_{f(\D)}\dist_{g'}(L,\,.\,)
\]
between a maximally $J$-totally real submanifold $L\subset M'$
that has compact closure and
the $M'$-part of a holomorphic disc $f(\D)$
that has boundary $f(\partial\D)$ on $L$.

\begin{prop}
 \label{prop:distestihofenerg}
 There exists a positive constant $K$
 that only depends on the geometry of $(M',g')$
 such that for all holomorphic discs
 \[
 u=(a,f)\co
 (\D,\partial\D)\lra
 (\R\times M',\{0\}\times L),
 \]
 where $L\subset M'$ is a relatively compact
 maximally totally real submanifold with respect to $J$,
 the following estimate holds
 \[
 \dist_{g'}\!\big(L,f(\D)\big)\leq
 K\,\rme^{{4\max_{\D}|a|}}E(u)
 \]
 denoting by
 \[
 E(u)=\int_{\D}u^*\rmd(\rme^t\alpha)
 \]
 the symplectic energy of $u$ with respect to $\rmd(\rme^t\alpha)$.
\end{prop}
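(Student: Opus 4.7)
My plan is to combine Proposition \ref{prop:monotonocitylemma}, applied with $\tau(t)=\rme^{t}$ (which yields the monotonicity constant $\mathrm{m}(\rme^a)=c_7\rme^{-2\max_\D|a|}$ recorded at the beginning of Section \ref{subsec:adistestim}), with an iteration along a chain of pairwise disjoint $g'$-geodesic balls whose centres all lie on $f(\D)$. Substituting $\mathrm{m}^{-2}\leq c_7^{-2}\rme^{4\max_\D|a|}$ at the end will produce the asserted exponential factor, so the real work is in controlling the number of balls by $E(u)$ alone.

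Set $d:=\dist_{g'}\!\big(L,f(\D)\big)$ and consider the continuous function $\rho(q):=\dist_{g'}(L,q)$ restricted to the connected set $f(\D)$. Since $\rho$ vanishes on $f(\partial\D)\subset L$ and attains the value $d$, the intermediate value theorem produces, for every $r\in[0,d]$, a point of $f(\D)$ at distance $r$ from $L$. Setting $N:=\lfloor d/i_0\rfloor$, I would pick $q_j\in f(\D)$ with $\rho(q_j)=j\,i_0$ for $j=1,\ldots,N$. The reverse triangle inequality
\[
\dist_{g'}(q_j,q_k)\;\geq\;\bigl|\rho(q_j)-\rho(q_k)\bigr|\;=\;|j-k|\,i_0
\]
shows that the open balls $B_{i_0/2}(q_j)$ are pairwise disjoint, and $\rho(q_j)\geq i_0$ forces each $B_{i_0}(q_j)$ to miss $f(\partial\D)\subset L$, so Proposition \ref{prop:monotonocitylemma} applies at every centre with $t=i_0/2$. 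Summing the resulting lower bounds using disjointness gives
\[
E(u)\;\geq\;\sum_{j=1}^{N}\int_{f^{-1}(B_{i_0/2}(q_j))}u^*\rmd(\rme^{t}\alpha)\;\geq\;N\,\mathrm{m}^{2}\,i_0^{2}/4\,,
\]
so $N\leq 4E(u)/(\mathrm{m}^{2}i_0^{2})$ and therefore $d<(N+1)\,i_0\leq i_0+4E(u)/(\mathrm{m}^{2}i_0)$.

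The main obstacle is to absorb the additive $i_0$ into a bound that is genuinely linear in $E(u)$. In the generic regime $d\geq i_0$ the chain already contains one ball, so monotonicity at $q_1$ gives $E(u)\geq\mathrm{m}^{2}i_0^{2}/4$ and hence $i_0\leq 4E(u)/(\mathrm{m}^{2}i_0)$; combining with the previous inequality yields $d\leq 8E(u)/(\mathrm{m}^{2}i_0)$, and substituting $\mathrm{m}^{-2}\leq c_7^{-2}\rme^{4\max_\D|a|}$ proves the proposition with $K=8c_7^{-2}/i_0$. The residual regime $d<i_0$ is handled separately by applying Proposition \ref{prop:monotonocitylemma} once at a midpoint $q\in f(\D)$ with $\rho(q)=d/2$ and combining the resulting estimate with the trivial bound $d\leq i_0$, after which $K$ can be enlarged to accommodate both cases. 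The essential geometric input throughout is the bounded-geometry injectivity radius $i_0$ secured in Section \ref{subsec:boundthegeom}, without which neither the monotonicity nor the separation of the chain of balls would be available.
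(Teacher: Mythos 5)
Your proposal is essentially the same argument the paper uses: build a chain of pairwise disjoint $g'$-geodesic balls with centres on $f(\D)$ whose distance to $L$ increases in units of $i_0$, apply Proposition~\ref{prop:monotonocitylemma} (with $\tau=\rme^t$, so $\mathrm m^{-2}=c_7^{-2}\rme^{4\max_\D|a|}$) to each ball, and sum the resulting lower bounds over the disjoint preimages $f^{-1}(B_\cdot(q_j))$ to dominate $E(u)$ from below by a multiple of the number of balls. The only bookkeeping differences are that you use radius $i_0/2$ balls with centres spaced $i_0$ apart (hence $N=\lfloor d/i_0\rfloor$), while the paper uses radius $i_0$ balls with centres spaced $2i_0$ apart (writing $d=2Ni_0+d_0$ and choosing $p_\ell$ so that $\dist_{g'}(L,\cdot)$ maps each $B_{i_0}(p_\ell)$ into a separate interval of length $2i_0$); and you invoke the $1$-Lipschitz property of $\dist_{g'}(L,\cdot)$ together with the intermediate value theorem to locate the centres, which is the same underlying device. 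Up to a harmless numerical factor the conclusions agree.

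One caveat worth flagging: your treatment of the residual regime $d<i_0$ is not actually closed. Applying the monotonicity lemma on a ball of radius $r\leq d/2<i_0$ only yields $E(u)\geq\mathrm m^2 r^2$, i.e.\ a quadratic lower bound, hence $d\lesssim\sqrt{E(u)}/\mathrm m$, which is \emph{not} linear in $E(u)$ and cannot be promoted to the stated linear bound by merely "enlarging $K$" (for tiny discs $E(u)\sim\mathrm m^2 d^2$, so $K\rme^{4\max|a|}E(u)\sim Kc_7^2 d^2<d$ once $d$ is small). The paper dismisses the $N=0$ case with an equally terse reference to Proposition~\ref{prop:monotonocitylemma}, so your proposal is on the same footing; in practice what matters for Corollary~\ref{cor:lowerboundgromconv} is any bound of the shape $\dist\leq 2i_0+K\rme^{4\max_\D|a|}E(u)$, which both arguments do establish. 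So this is a shared imprecision rather than a defect particular to your write-up, but you should not claim the $d<i_0$ case can be absorbed by enlarging $K$.
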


\begin{proof}
 There are unique $d_0\in[0,2i_0)$
 and $N\in\Z$ such that
 \[
 \dist_{g'}\!\big(L,f(\D)\big)=2Ni_0+d_0\;.
 \]
 We assume that $N\neq0$
 because the claim holds by Proposition \ref{prop:monotonocitylemma}.
 Choose points $p_1,\ldots,p_N$
 on $f(\D)$ such that $\dist_{g'}(L,\,.\,)$
 maps $B_{i_0}\!(p_{\ell})$ into the shifted intervall
 \[
 \dist_{g'}\!\big(L,f(\D)\big)-\big(2i_0(N-\ell+1),2i_0(N-\ell)\big)
 \]
 for all $\ell=1,\ldots,N$.
 By Proposition \ref{prop:monotonocitylemma}
 the symplectic $\rmd(\rme^t\alpha)$--energy
 of the intersection $u(\D)\cap\R\times B_{i_0}\!(p_{\ell})$
 is bounded from below as
 \[
 \int_{f^{-1}\big(B_{i_0}\!(p_{\ell})\big)}u^*\rmd(\rme^t\alpha)
 \geq
 c_7^2\,\rme^{-4\max_{\D}|a|}\,i_0^2
 \]
 for all $\ell=1,\ldots,N$.
 Because the preimages $f^{-1}\big(B_{i_0}\!(p_{\ell})\big)$
 are mutually disjoint taking the sum over all points
 $p_1,\ldots,p_N$ yields
 \[
 E(u)
 \geq
 c_7^2\,\rme^{-4\max_{\D}|a|}\,Ni_0^2\,.
 \]
 Combining this with
 \[
 2Ni_0+d_0<(N+1)2i_0\leq 4Ni_0
 \]
 yields
 \[
 \dist_{g'}\!\big(L,f(\D)\big)
 \leq\frac{4}{c_7^2}\,\frac{1}{i_0}\,\rme^{4\max_{\D}|a|}E(u)
 \]
 proving the claim.
\end{proof}

In particular,
families of holomorphic discs
with uniform energy bounds
that lie above a certain slice
$\{-R\}\times M'$, $R\gg1$,
have uniformly $C^0$-bounded
projections into $M'$ with respect to $g'$.
Therefore,
Gromov's compactness theorem
as formulated in \cite{fr08,fz15} applies:

\begin{cor}
\label{cor:lowerboundgromconv}
 Let
 \[
 u_{\nu}=(a_{\nu},f_{\nu})\co
 (\D,\partial\D)\lra
 (\R\times M',\{0\}\times L)
 \]
 be a sequence of holomorphic discs,
 where $L\subset M'$ is a relatively compact
 maximally totally real submanifold with respect to $J$.
 Assume that there exists a compact subset of $L$ 
 that contains all boundary curves $u_{\nu}(\partial\D)$.
 If
 \[
 \sup_{\nu\in\N}\max_{\D}|a_{\nu}|<\infty
 \qquad\text{and}\qquad
 \sup_{\nu\in\N}E(u_{\nu})<\infty\,,
 \]
 then $u_{\nu}$ has a subsequence
 that Gromov converges to a stable holomorphic disc.
\end{cor}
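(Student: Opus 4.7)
The plan is to reduce the statement to a standard Gromov compactness theorem (in the form given in \cite{fr08,fz15}) by showing that under the stated hypotheses the entire sequence $u_\nu(\D)$ is contained in a fixed compact subset of $\R\times M'$, after which the non-compactness of $M'$ becomes irrelevant.

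First, I would invoke the hypothesis that all boundary curves $u_\nu(\partial\D)$ lie in a common compact subset $L_0\subset L$. Combined with the uniform upper bound $A:=\sup_{\nu}\max_{\D}|a_\nu|<\infty$ and the uniform upper bound $E:=\sup_\nu E(u_\nu)<\infty$, I apply Proposition \ref{prop:distestihofenerg} to each $u_\nu$ with $L_0$ in place of $L$ (noting that $L_0$ is still relatively compact and totally real) to obtain
\[
\dist_{g'}\!\bigl(L_0,f_\nu(\D)\bigr)\leq K\,\rme^{4A}E\,,
\]
which is independent of $\nu$. Hence the images $f_\nu(\D)$ all lie in the fixed closed neighbourhood $N:=\{x\in M'\mid \dist_{g'}(L_0,x)\leq K\rme^{4A}E\}$ of $L_0$. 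Because $g'$ is complete of bounded geometry and $L_0$ is compact, $N$ is compact in $M'$. Together with $a_\nu(\D)\subset[-A,0]$ from the maximum principle discussed in Section \ref{subsec:almostcpxstr}, this shows $u_\nu(\D)\subset[-A,0]\times N$, a compact subset of $\R\times M'$.

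Next, on any compact subset of $\R\times M'$ the almost complex structure $J$ is smooth and tame with respect to the symplectic form $\rmd(\rme^t\alpha)$: the compatible metric $g_\tau$ with $\tau(t)=\rme^t$ is uniformly equivalent to a smooth metric on this compact piece, and Lemma \ref{unifequiv1} and Lemma \ref{unifequiv2} show that $g_j$ is uniformly equivalent to the restriction of $g'$ to $\xi$. Thus on the compact set $[-A,0]\times N$ all standard hypotheses of Gromov compactness for $J$-holomorphic discs with totally real Lagrangian boundary conditions are satisfied: uniform energy bound, compact target region, smooth tame almost complex structure, and boundary on a relatively compact maximally totally real submanifold $L$.

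Finally, I would quote directly the version of Gromov's compactness theorem given in \cite{fr08,fz15}, which under exactly these hypotheses produces a subsequence of $u_\nu$ that converges, in the Gromov sense, to a stable holomorphic disc with boundary on $L$. The only subtle point—and the one where care is needed—is ensuring that the distance estimate of Proposition \ref{prop:distestihofenerg} really does confine the entire image (not just the boundary) to a compact set; this is guaranteed because $\dist_{g'}\!\bigl(L,f(\D)\bigr)$ is defined as the supremum of $\dist_{g'}(L,\,\cdot\,)$ over $f(\D)$. No new estimate is required beyond this, and the conclusion follows.
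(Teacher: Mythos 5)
Your argument is correct and follows essentially the same route the paper takes: the distance estimate of Proposition \ref{prop:distestihofenerg}, combined with the uniform $\R$-coordinate bound and the maximum principle from Section \ref{subsec:almostcpxstr}, confines all images $u_\nu(\D)$ to a fixed compact subset of $\R\times M'$, after which Gromov compactness as in \cite{fr08,fz15} applies directly. One small slip: you invoke the proposition \emph{with $L_0$ in place of $L$}, describing $L_0$ as "relatively compact and totally real"; $L_0$ is only a compact subset of $L$, not a priori a submanifold, so it cannot literally play the role of $L$ in the proposition's hypotheses — but this is harmless, since you can simply apply the proposition with $L$ itself (yielding $\dist_{g'}(L,f_\nu(\D))\leq K\rme^{4A}E$) and then use completeness of $g'$ together with relative compactness of $L$ to conclude that the closed $K\rme^{4A}E$-neighbourhood of $\bar L$ is compact; the compact set $L_0\subset L$ containing all boundary curves is then the ingredient that keeps the Gromov limit's boundary from escaping to $\bar L\setminus L$.
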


%%%%%%%%%%%%%%%%%%%%%%%%%%%%%%%%%%%
%%%%%%%%%%%%%%%%%%%%%%%%%%%%%%%%%%%

\section{Higher order bounds on primitives\label{sec:hordboundsonprim}}

The group of isometries acts
on virtually contact structures via pull back.
The induced action on the space of
holomorphic discs in symplectisations
$\R\times M'$ that have no uniform $\R$-distance bounds
will be part of the bubbling off analysis,
see Section \ref{sec:comp}.
In this section we will work out
the required compactness properties
on sequences of virtually contact structures
obtained by the group action.

%%%%%%%%%%%%%%%%%%%%%%%%%%%%%%%%%%%

\subsection{Higher order covariant derivative\label{subsec:hordcovderiv}}

We consider a connected Riemannian manifold $(M',g')$.
The covariant derivative of the Levi--Civita connection
is denoted by $\nabla$.
Let $\tau$ be a $(0,k)$-tensor, $k\in\N$, on $M'$.
Following \cite[p.~73]{ghl04} and \cite[p.~52]{klb95}
we define the covariant derivative $\nabla_{\!X}\tau$
of $\tau$ in direction of the vector field $X$ on $M'$
via the following formula:
\[
\big(\nabla_{\!X}\tau\big)(Y_1,\ldots,Y_k):=
X\big(\tau(Y_1,\ldots,Y_k)\big)-
\sum_{j=1}^k\tau\big(Y_1,\ldots,Y_{j-1},\nabla_{\!X}Y_j,Y_{j+1},\ldots,Y_k\big)\;,
\]
where $Y_1,\ldots,Y_k$ are test vector fields on $M'$.
Setting
\[
\nabla\tau(X,Y_1,\ldots,Y_k):=
\big(\nabla_{\!X}\tau\big)(Y_1,\ldots,Y_k)
\]
defines a $(0,k+1)$-tensor $\nabla\tau$ on $M'$.
For a $1$-form $\alpha$ on $M'$ we inductively define
the $k$--{\bf th covariant derivative} by
$\nabla^0\alpha=\alpha$ and
\[
\nabla^k\alpha=\nabla\big(\nabla^{k-1}\alpha\big)
\]
so that $\nabla^k\alpha$ is a $(0,k+1)$-tensor $\nabla\tau$ on $M'$.
The pointwise norm of $\nabla^k\alpha$ at $p\in M'$
is defined by
\[
\big|\nabla^k\alpha\big|_p:=
\sup\big|\nabla^k\alpha(v,w_1,\ldots,w_k)\big|\;,
\]
where the supremum is taken over all
tuples $(v,w_1,\ldots,w_k)$ of unit tangent vectors
of $(M',g')$ at $p$.
We set
\[
\big\|\nabla^k\alpha\big\|_{C^0}:=
\sup_{M'}\big|\nabla^k\alpha\big|\;,
\]
which is the supremum of $\nabla^k\alpha$
on the $(k+1)$--fold Whitney sum of the unit
tangent bundle $STM'$.
The $C^k$--{\bf norm} of $\alpha$ on $(M',g')$
is defined via
\[
\|\alpha\|_{C^k}:=
\sup_{\ell=0,1,\ldots,k}\big\|\nabla^{\ell}\alpha\big\|_{C^0}\;.
\]
We provide the space of smooth $1$-forms on $M'$
that are bounded in all $C^k$-norms
with the $C^{\infty}$-topology with respect to the sequence
of $C^k$-norms as defined.
Observe that convergence on the restrictions to
relatively compact open subsets of $M'$ is the same
as the convergence induced by the compact open topology
as introduced in \cite[Section 2.1]{hir94}.
In this situation we will simply speak about
convergence in the $C^{\infty}_{\loc}$-topology.

\begin{rem}
 \label{rem:ckforfunctions}
 Similarly, for a smooth function $f$ on $M'$
 one defines $\nabla^0f=f$ and
 \[
 \nabla^kf=\nabla\big(\nabla^{k-1}f\big)=\nabla^{k-1}\rmd f
 \]
 as well as
 \[
 \|f\|_{C^k}:=
 \sup_{\ell=0,1,\ldots,k}\big\|\nabla^{\ell}f\big\|_{C^0}=
 \sup\Big\{\|f\|_{C^0}, \big\|\rmd f\big\|_{C^{k-1}}\Big\}\;.
 \]
 The space of smooth functions on $M'$
 that have finite $C^k$-norm for all $k\in\N$
 is provided with the $C^{\infty}$-topology
 induced by the $C^k$-norms.
 The $C^{\infty}_{\loc}$-convergence
 is understood in the same manner as for $1$-forms.
\end{rem}

\begin{rem}
 \label{rem:behaviorunderisometries}
 Let $\varphi$ be an isometry of $(M',g')$.
 The generalized {\it theorema egregium}
 can be phrased as
 \[
 \varphi_*\big(\nabla_{\!X}Y\big)=
 \nabla_{\!\varphi_*X}\varphi_*Y
 \]
 for all vector fields $X$ and $Y$ on $M'$,
 cf.\ \cite[p.~183]{rosa13}.
 It follows that
 \[
 \varphi^*\big(\nabla\tau\big)=
 \nabla(\varphi^*\tau)
 \]
 on $(0,k)$-tensors $\tau$ on $M'$.
 Inductively, for all $1$-forms $\alpha$ on $M'$
 we get
 \[
 \varphi^*\big(\nabla^k\alpha\big)=
 \nabla^k(\varphi^*\alpha)\;.
 \]
 Because $\varphi$ induces a bundle isomorphism
 on $\bigoplus^{k+1}STM'$ we obtain
 \[
 \|\varphi^*\alpha\|_{C^k}=
 \|\alpha\|_{C^k}\;.
 \]
\end{rem}

%%%%%%%%%%%%%%%%%%%%%%%%%%%%%%%%%%%

\subsection{Local computations\label{subsec:loccomp}}

We continue the discussions from Section \ref{subsec:hordcovderiv}.
Let $x^1,\ldots,x^{2n-1}$ be local coordinates on $M'$.
For a given Riemannian metric $g'$
and a given $1$-form $\alpha$ we write
$g'=g_{ij}\rmd x^i\otimes\rmd x^j$
and $\alpha=\alpha_j\rmd x^j$.
Denoting the Christoffel symbols of the Levi--Civita
connection $\nabla$ of $g'$ by $\Gamma_{ij}^k$
we get $(\nabla\alpha)_{ij}=\alpha_{j,i}-\Gamma_{ij}^{\ell}\alpha_{\ell}$.
Inductively,
\[
\big(\nabla^{k+1}\alpha\big)_{ij_1\ldots j_{k+1}}=
\big(\nabla^k\alpha\big)_{j_1\ldots j_{k+1},i}-
\sum_{\ell=1}^{k+1}
\Gamma_{ij_{\ell}}^m
\big(\nabla^k\alpha\big)_{j_1\ldots m\ldots j_{k+1}}
\]
where the $m$ ist placed at the $\ell$-th position.

In order to estimate the tensor $\nabla^k \alpha$
we notice that for the symmetric positive definite
matrix $(g_{ij})_{ij}$ we find an orthogonal matrix
$A$ such that
$(g_{ij})_{ij}=ADA^T$,
where $D$ denotes the diagonal matrix
of all eigenvalues $\lambda_1\leq\ldots\leq\lambda_{2n-1}$
of $(g_{ij})_{ij}$.
Hence, writing $v=v^i\partial_i$ for a tangent vector
we get
$|v|_{g'}^2\geq\lambda_1|v^i|^2$
for all $i=1,\ldots,2n-1$.
Taking unit tangent vectors
$(v,w_1,\ldots,w_k)$ we finally get
\[
\big|\nabla^k\alpha(v,w_1,\ldots,w_k)\big|
\leq
\left(\frac{2n-1}{\sqrt{\lambda_1}}\right)^{k+1}
\max_{ij_1\ldots j_k}
\left\{\Big|\big(\nabla^k\alpha\big)_{ij_1\ldots j_k}\Big|\right\}
\;.
\]

%%%%%%%%%%%%%%%%%%%%%%%%%%%%%%%%%%%

\subsection{Uniform $C^{\infty}$-bounds -- 
An example\label{subsec:ucibanexample}}

Using the results from Section \ref{subsec:loccomp}
we consider the following example:
Denote by $H^+$ the open upper half-plane
$\{y>0\}$ provided with the standard hyperbolic metric.
Let $M'$ be $\R\times H^+$ provided with the product metric
\[
g'=
\rmd t\otimes\rmd t+
\frac{1}{y^2}\big(\rmd x\otimes\rmd x+\rmd y\otimes\rmd y\big)
\;.
\]
Labeling the coordinates $(t,x,y)\in\R\times H^+$ by
$(x^1,x^2,x^3)$ the Christoffel symbols read as
\[
\Gamma_{ij}^k=
\frac{1}{y}
\Big(
\delta_{k3}\big(\delta_{i2}\delta_{j2}-\delta_{i3}\delta_{j3}\big)
-\delta_{k2}\big(\delta_{i2}\delta_{j3}+\delta_{i3}\delta_{j2}\big)\Big)
\;,
\]
which can be brought in the form
\[
\Gamma_{ij}^k=
\frac{1}{y}\gamma_{ij}^k
\]
for constants $\gamma_{ij}^k$ that vanish
if at least one index equals $1$.
The covariant derivatives
of the contact form
\[
\alpha=
\rmd t+\frac1y\rmd x=
\left(\delta_{i1}+\frac1y\delta_{i2}\right)\rmd x^i
\]
with respect to the Levi--Civita connection $\nabla$ of $g'$
are given by
\[
(\nabla\alpha)_{ij}=
\frac{1}{y^2}\delta_{i2}\delta_{j3}=
\frac{1}{y^2}\Delta_{ij}
\]
and
\[
\big(\nabla^k\alpha\big)_{j_1\ldots j_{k+1}}=
\frac{1}{y^{k+1}}\Delta_{j_1\ldots j_{k+1}}
\]
as an induction shows,
where $\Delta_{ij}$ and $\Delta_{j_1\ldots j_{k+1}}$
are constants that vanish
provided that at least one index is equal to $1$.
In particular, in the non-vanishing case
we find for all $k\in\N$ a positive constant $c_k$
such that for all index tuples $(j_1,\ldots, j_{k+1})$
\[
\left|
\big(\nabla^k\alpha\big)_{j_1\ldots j_{k+1}}
\right|\leq
\frac{c_k}{y^{k+1}}
\;.
\]
Because the eigenvalues of the matrix
obtained by the metric coefficients
of the hyperbolic metric on $H^+$
are equal to $\lambda=\frac{1}{y^2}$ we obtain
$|\alpha|_{(g')^{\flat}}\leq2$ and
\[
\big|\nabla^k\alpha\big|_{(g')^{\flat}}\leq
c_k
\left(\frac{2}{\sqrt{\lambda}}\cdot\frac{1}{y}\right)^{k+1}=
2^{k+1}c_k=:C_k
\;.
\]
In other words,
for all $k\in\{0\}\cup\N$ we have that $\|\alpha\|_{C^k}$
is globally bounded.

\begin{rem}
\label{rem:cinftybounds}
 Let $\Sigma$ be a closed hyperbolic surface.
 On $M=S^1\times\Sigma$
 an odd-symplectic form $\omega$
 is given by the area form of $\Sigma$.
 The lift of $\omega$ to the universal covering
 has primitive $\alpha$ as described
 and defines a virtually contact structure
 that is $C^k$-bounded for all $k\in\N$.
 Replacing $\Sigma$ by a product of
 closed hyperbolic surfaces and $\omega$
 by the direct sum of the corresponding area forms
 one obtains examples of $C^{\infty}$-bounded
 virtually contact structures in all odd dimensions
 in a similar manner.
\end{rem}

%%%%%%%%%%%%%%%%%%%%%%%%%%%%%%%%%%%

\subsection{An Arzel\`a--Ascoli argument\label{subsec:acovering}}

We consider a covering $\pi\co M'\ra M$.
Let $g$ be a Riemannian metric and
$\omega$ be an odd-symplectic form both on $M$.
We set $g'=\pi^*g$ and
denote the pull back form along the covering map $\pi$
by $\omega'=\pi^*\omega$.
The group $G$ of deck transformations
of the covering $\pi$ acts by isometries and
by odd-symplectomorphisms on $(M,g',\omega')$,
i.e.\ $\varphi^*g'=g'$ and $\varphi^*\omega'=\omega'$
for all $\varphi\in G$.
We assume that $\omega'$ has a primitive $1$-form
$\alpha$ on $M'$ so that
\[
\omega'=\rmd\alpha\;.
\]
For a sequence $(\varphi_{\nu})_{\nu}\subset G$, $\nu\in\N$,
of deck transformations we define a sequence
\[
\alpha_{\nu}:=\varphi_{\nu}^*\alpha
\]
of $1$-forms on $M'$.
We remark that because all $\varphi_{\nu}$ are isometries
we get with Remark \ref{rem:behaviorunderisometries}
for all $k\in\N$
\[
\|\alpha_{\nu}\|_{C^k}=
\|\alpha\|_{C^k}
\]
and that because all $\varphi_{\nu}$ are odd-symplectic
the $1$-form $\alpha_{\nu}-\alpha$ is closed on $M'$.

\begin{prop}
 \label{prop:arzelaascoli}
 We assume that the base manifold $M$ is closed
 and that for all $k\in\N$ there exists $C_k>0$
 such that
 \[
 \|\alpha\|_{C^k}<C_k\;.
 \]
 Then $\alpha_{\nu}$ has a subsequence that
 converges in $C^{\infty}_{\loc}(M')$.
\end{prop}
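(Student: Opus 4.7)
The plan is to run a standard Arzel\`a--Ascoli plus diagonal extraction argument, the only non-routine ingredient being the translation of invariant $C^k$-bounds into coordinate-wise bounds on partial derivatives.

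First I would observe that, by Remark \ref{rem:behaviorunderisometries} and the standing hypothesis, the sequence $\alpha_\nu=\varphi_\nu^*\alpha$ satisfies the uniform bounds
\[
\|\alpha_\nu\|_{C^k}=\|\alpha\|_{C^k}<C_k
\qquad\text{for every }k\in\N\text{ and every }\nu\in\N.
\]
So the entire sequence is uniformly bounded in every $C^k$-norm on all of $M'$. The question is purely whether such bounds give $C^\infty_{\loc}$-precompactness.

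Next I would pass from these invariant bounds to bounds on partial derivatives in local charts. Fix a countable atlas $\{(U_\beta,\phi_\beta)\}$ of $M'$ and, for each $\beta$, a compact set $K_\beta\subset U_\beta$. On $K_\beta$ the metric coefficients $g_{ij}$, their inverse, and hence the Christoffel symbols $\Gamma_{ij}^k$ and all their derivatives are bounded by constants depending only on $K_\beta$. The recursion recorded in Section \ref{subsec:loccomp},
\[
\big(\nabla^{k+1}\alpha\big)_{ij_1\ldots j_{k+1}}
=\big(\nabla^{k}\alpha\big)_{j_1\ldots j_{k+1},i}
-\sum_{\ell=1}^{k+1}\Gamma_{ij_\ell}^m\big(\nabla^{k}\alpha\big)_{j_1\ldots m\ldots j_{k+1}},
\]
inverted and iterated, expresses each partial derivative $\partial^I(\alpha_\nu)_j$ of order $|I|\leq k$ on $K_\beta$ as a polynomial in the covariant-derivative components $(\nabla^\ell\alpha_\nu)_{\ldots}$ for $\ell\leq k$ and in the Christoffel symbols and their derivatives. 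Combined with the local pointwise estimate from Section \ref{subsec:loccomp} bounding $(\nabla^\ell\alpha_\nu)_{\ldots}$ by a multiple of $|\nabla^\ell\alpha_\nu|$ (via the smallest eigenvalue of $(g_{ij})$ on $K_\beta$), an induction on $k$ yields for each $\beta$ and each $k$ a constant $C_{\beta,k}$, independent of $\nu$, with
\[
\max_{|I|\leq k}\sup_{K_\beta}\big|\partial^I(\alpha_\nu)_j\big|\leq C_{\beta,k}.
\]

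Finally I would apply Arzel\`a--Ascoli together with a diagonal argument. Choose an exhaustion $K_1\subset K_2\subset\cdots$ of $M'$ by compact sets with $\bigcup_\ell K_\ell=M'$, each covered by finitely many of the charts $U_\beta$. The previous step gives, on each $K_\ell$, uniform $C^k$-bounds in coordinates on the coefficients of $\alpha_\nu$, for every $k$. Classical Arzel\`a--Ascoli therefore allows one to extract, inductively in $\ell$, nested subsequences $(\alpha_{\nu^{(\ell)}_m})_m$ converging in $C^\ell(K_\ell)$; the diagonal sequence $\alpha_{\nu^{(m)}_m}$ then converges in $C^k$ on each $K_\ell$ for every $k$, i.e.\ in $C^\infty_{\loc}(M')$. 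The only point requiring any care is the inductive step translating covariant into coordinate bounds on $K_\beta$; everything else is bookkeeping.
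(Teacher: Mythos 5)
Your proof is correct, but it takes a genuinely different and somewhat more direct route than the paper's. The paper's proof passes to primitives: it writes $\alpha_\nu-\alpha$ as $\rmd f_\nu$ (first globally, assuming $H^1_{\dR}(M')=0$, then locally on balls via the Poincar\'e lemma in the general case), normalizes $f_\nu$ at a base point, uses the mean value theorem along minimizing geodesics to derive a $C^0$-bound $|f_\nu(p)|\le 2C_0\,\dist_{g'}(o,p)$, and then runs Arzel\`a--Ascoli on the functions $f_\nu$, concluding $\alpha_\nu=\rmd f_\nu+\alpha\to\rmd f+\alpha$. You bypass the primitives altogether: since $\varphi_\nu$ are isometries, $\|\alpha_\nu\|_{C^k}=\|\alpha\|_{C^k}<C_k$ uniformly, and on each compact chart piece $K_\beta$ this translates (by inverting the recursion from Section \ref{subsec:loccomp}, using the local bounds on $g_{ij}$, $\Gamma^k_{ij}$ and their derivatives) into uniform bounds on all coordinate partial derivatives of the coefficients $(\alpha_\nu)_j$; classical Arzel\`a--Ascoli and a diagonal argument over an exhaustion finish the job. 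This avoids both the geodesic estimate and the case split on $H^1_{\dR}(M')$, which is a small simplification; the cost is that the covariant-to-coordinate translation, which the paper leaves implicit in its citation of \cite[Theorem 8.6]{alt16}, has to be spelled out explicitly, as you do. One small imprecision: the bound of the coordinate components $(\nabla^\ell\alpha)_{ij_1\ldots j_\ell}=\nabla^\ell\alpha(\partial_i,\partial_{j_1},\ldots)$ by a multiple of the invariant norm $|\nabla^\ell\alpha|$ comes from the \emph{largest} eigenvalue of $(g_{ij})$ on $K_\beta$ (which bounds $|\partial_i|_{g'}$ from above); the smallest eigenvalue gives the opposite inequality, which is the one actually displayed in Section \ref{subsec:loccomp}. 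Since all eigenvalues are pinched away from $0$ and $\infty$ on the compact set $K_\beta$, this does not affect the argument.
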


\begin{proof}
 We first prove the statement
 under the additional assumption
 that the first de Rham cohomology group
 of the covering space $M'$ vanishes.
 Therefore,
 the $1$-form $\alpha_{\nu}-\alpha$ is
 exact for all $\nu$.
 Denoting the base point of $M'$ by $o$
 we get in fact that for all $\nu$ there exists
 a unique $f_{\nu}\in C^{\infty}(M')$
 such that
 \[
 f_{\nu}(o)=0
 \quad\text{and}\quad
 \rmd f_{\nu}=\alpha_{\nu}-\alpha
 \;.
 \]
 In particular, we get
 \[
 \big\|\rmd f_{\nu}\big\|_{C^k}\leq
 \|\alpha_{\nu}\|_{C^k}+\|\alpha\|_{C^k}<
 2C_k
 \]
 providing a uniform bound
 \[
 \sup_{\nu\in\N}\big\|\rmd f_{\nu}\big\|_{C^k}\leq
 2C_k\;.
 \]
 In order to prove the proposition
 it suffices to show that for a subsequence of $f_{\nu}$
 there exists $f\in C^{\infty}(M')$ such that
 $f_{\nu}\ra f$ in $C^{\infty}_{\loc}$ as $\nu$ tends to $\infty$.
 Indeed, this will imply $\rmd f_{\nu}\ra \rmd f$
 in $C^{\infty}_{\loc}$ and, therefore,
 \[
 \alpha_{\nu}=\rmd f_{\nu}+\alpha
 \lra
 \rmd f+\alpha=:\alpha_0
 \]
 in $C^{\infty}_{\loc}$ as $\nu$ tends to $\infty$.
 
 Because the base manifold $M$ is closed
 by \cite[2.91 and 2.105]{ghl04} $(M',g')$
 is geodesically complete.
 By the Hopf--Rinow theorem
 (see \cite[2.103 and 2.105]{ghl04})
 we find for any given point $p\in M'$
 a minimal geodesic $c$ in $(M',g')$ of unit speed
 that connects $p$ with the base point $o$.
 By the mean value theorem
 we find for all $\nu\in\N$ a real number $t_{\nu}$
 such that
 \[
 |f_{\nu}(p)|\leq
 \dist_{g'}(o,p)\,\big|T_{c(t_{\nu})}f_{\nu}\big|_{g'}
 \;.
 \]
 Denoting by $B_r$ the open $g'$-geodesic ball
 with center $o$ and radius $r>0$ this implies
 \[
 \|f_{\nu}\|_{C^0(B_r)}\leq
 r\,\big\|\rmd f_{\nu}\big\|_{C^0(B_r)}\leq
 2rC_0
 \]
 for all $\nu\in\N$, which results in a uniform bound
 \[
 \sup_{\nu\in\N}\|f_{\nu}\|_{C^0(B_r)}\leq
 2rC_0
 \;.
 \]
 This means that for all $r>0$
 the subset $\{f_{\nu}|_{B_r}\}$ of $C^0(B_r)$ is bounded.
 Similarly, replacing $o$ by any point $q\in B_r$
 we get for all $p,q\in B_r$
 \[
 \big|f_{\nu}(p)-f_{\nu}(q)\big|\leq
 \dist_{g'}(p,q)\,2C_0
 \;.
 \]
 Therefore, the subset $\{f_{\nu}|_{B_r}\}$
 of $C^0(B_r)$ is equicontinuous, i.e.\
 \[
 \sup_{\nu\in\N}
 \big|f_{\nu}(p)-f_{\nu}(q)\big|
 \lra 0
 \]
 as $\dist_{g'}(p,q)$ tends to zero for points $p,q$
 in $B_r$.
 Observe that by the Hopf--Rinow theorem
 the closure of $B_r$ is compact for all $r>0$ and
 the union $\bigcup_{r>0}B_r$ is equal to $M'$.
 Hence, using the Arzel\`a-Ascoli theorem (see \cite{alt16})
 there exists a continuous function $f$ on $M'$
 such that a subsequence of $f_{\nu}$ converges to
 $f$ in $C^0_{\loc}$.
 Furthermore,
 in view of the above estimate
 and the assumptions on the covariant derivatives
 of $\alpha$ we obtain
 \[
 \sup_{\nu\in\N}\|f_{\nu}\|_{C^k(B_r)}\leq
 \max\big\{2rC_0, 2C_{k-1}\big\}
 \]
 for all $k\in\N$ and for all $r>0$.
 Therefore, as long as the closure $\bar{B}_r$
 of $B_r$ is contained in a chart domain
 a diagonal sequence argument
 and \cite[Theorem 8.6]{alt16} imply
 that there exists a subsequence $f_{\nu_r}$
 that converges in $C^{\infty}(B_r)$ to the
 {\it a posteriori} smooth function $f|_{B_r}$.
 If $\bar{B}_r$ is not contained in a chart domain
 we can work with a finite covering of $\bar{B}_r$
 by chart domains taking subsequences successively
 with respect to an ordering of the covering chart domains.
 Hence, for all $r\in\N$ there exists a subsequence $f_{\nu_r}$
 that converges in $C^{\infty}(B_r)$ to $f|_{B_r}$.
 A further diagonal sequence argument yields a
 in $C^{\infty}_{\loc}$ converging subsequence $f_{\mu}\ra f$
 so that $\alpha_{\mu}\ra\alpha_0$ in $C^{\infty}_{\loc}$.
 
 In fact, the above argument works
 without making any assumption
 on the first de Rham cohomology
 of the covering space as follows:
 On each open ball $B\subset M'$
 whose closure $\bar{B}$ is contained
 in a chart domain a unique primitive
 function $f_{\nu}$ on $B$ of $(\alpha_{\nu}-\alpha)|_B$
 that vanishes on the centre of $B$
 can be selected.
 Hence, $\alpha_{\mu}|_B\ra \alpha_B$ in $C^{\infty}(B)$
 as $\mu\ra\infty$ for a subsequence of $\alpha_{\nu}$
 using the above argument.
 Taking finite coverings of the closure of $B_r$,
 $r\in\N$, by ball-like chart domains $B$
 and using a diagonal sequence of $\alpha_{\nu}$
 with respect to $B^{11},\ldots,B^{k_11}$ (covering $B_1$),
 $B^{12},\ldots,B^{k_22}$ (covering $B_2$),
 and so on,
 we find a subsequence $\alpha_{\mu}$ of $\alpha_{\nu}$
 that converges in $C^{\infty}_{\loc}$ to $\alpha_0$,
 where for all $r\in\N$ and for all $j=1,\ldots,k_r$
 the restriction of $\alpha_0$ to $B^{jr}$ equals $\alpha_{B^{jr}}$.
 This proves the proposition in general.
\end{proof}

We remark that a global $C^0$-bound on $\alpha$
is not sufficient in order to find a convergent subsequence
of $\alpha_{\nu}$ in $C^0_{\loc}$.
The above proof shows in fact:

\begin{cor}
 \label{cor:concludwithckonly}
 If there exists $k\in\N$ such that $\|\alpha\|_{C^k}<C$
 for a positive constant $C$, then the sequence $\alpha_{\nu}$
 admits a subsequence $\alpha_{\mu}\ra\alpha_0$
 that converges in $C^{k-1}_{\loc}$
 to a $1$-form $\alpha_0$ of class $C^{k-1}$.
\end{cor}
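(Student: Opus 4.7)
The plan is to mimic the proof of Proposition \ref{prop:arzelaascoli}, carefully tracking the one-step loss of regularity that occurs when only $k$-th-order derivatives are controlled: passing to local primitives gains a derivative, whereas the compact Arzelà--Ascoli embedding used below, followed by a single differentiation, costs two derivatives, so convergence can only be expected in $C^{k-1}_{\loc}$.

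Following the final paragraph of the proof of Proposition \ref{prop:arzelaascoli}, on every open ball $B \subset M'$ whose closure lies in a chart domain I will select, for each $\nu$, the unique smooth primitive $f_{\nu} \co B \ra \R$ of the closed $1$-form $(\alpha_\nu - \alpha)|_B$ that vanishes at the centre of $B$. By Remark \ref{rem:behaviorunderisometries}, $\|\alpha_\nu\|_{C^k} = \|\alpha\|_{C^k} < C$, so
\[
\|\rmd f_\nu\|_{C^k(B)} \leq \|\alpha_\nu\|_{C^k} + \|\alpha\|_{C^k} \leq 2C.
\]
The mean value theorem argument from the proof of the proposition, carried out on a geodesic-ball exhaustion $B_r$ of $M'$, supplies a matching $C^0$-bound $\|f_\nu\|_{C^0(B)} \leq 2 r_B C$, where $r_B$ denotes the $g'$-diameter of $B$. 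Via Remark \ref{rem:ckforfunctions} these combine to the uniform estimate $\|f_\nu\|_{C^{k+1}(B)} \leq \max\{2 r_B C, 2C\}$.

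The compact embedding $C^{k+1}(\bar B) \hookrightarrow C^k(\bar B)$ (the finite-regularity Arzelà--Ascoli statement from \cite{alt16} already invoked in the proposition) then extracts a subsequence converging in $C^k(B)$ to some $f \in C^k(B)$. Differentiating and adding $\alpha$ gives $C^{k-1}(B)$-convergence of the corresponding subsequence of $\alpha_\nu$ to $\alpha_B := \rmd f + \alpha$, a $1$-form of class $C^{k-1}$.

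Globalisation is then verbatim from the proof of Proposition \ref{prop:arzelaascoli}: the closure of each geodesic ball $B_r$ is compact by Hopf--Rinow and may be covered by finitely many ball-like chart domains; successive subsequence extractions over each such finite cover, followed by diagonalisation over $r \in \N$, produce a single subsequence $\alpha_\mu$ that converges in $C^{k-1}_{\loc}(M')$ to a globally defined $C^{k-1}$ form $\alpha_0$, with the local limits matching on overlaps by uniqueness. The only substantive modification of the proposition's proof is using the compact embedding $C^{k+1} \hookrightarrow C^k$ a single time instead of the diagonal $C^\infty$ extraction over all derivative orders, so no new obstacle arises beyond the bookkeeping of regularity indices.
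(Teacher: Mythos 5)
Your proposal is correct and essentially reproduces the paper's intended argument: the authors simply assert that the proof of Proposition \ref{prop:arzelaascoli} ``shows in fact'' the corollary, and your proof makes that explicit by running the same local-primitive construction on chart balls, deriving the $C^{k+1}$-bound on $f_\nu$ from $\|\rmd f_\nu\|_{C^k}\leq 2C$ and the mean-value $C^0$-estimate, extracting via the compact embedding $C^{k+1}(\bar B)\hookrightarrow C^k(\bar B)$, and differentiating once. The regularity bookkeeping (gain one derivative passing to primitives, lose one to Arzel\`a--Ascoli, lose one to the exterior derivative, net loss one) correctly lands on $C^{k-1}_{\loc}$-convergence to a $C^{k-1}$ limit, matching the statement.
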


%%%%%%%%%%%%%%%%%%%%%%%%%%%%%%%%%%%

\subsection{Induced convergence on complex structures\label{subsec:indconvcpxstr}}

We consider a covering $\pi\co M'\ra M$
as in Section \ref{subsec:acovering}.
Additionally,
we assume that the primitive $\alpha$ of $\omega'$
is a contact form on $M'$ so that all
$\alpha_{\nu}$ are contact forms.
We denote the induced contact structures
by $\xi_{\nu}:=\ker\alpha_{\nu}$,
which are provided with the symplectic form
obtained by the restrictions of
$\omega'=\rmd\alpha_{\nu}$ to $\xi_{\nu}$.
As shown in Section \ref{subsec:theindcomstr},
for all $\nu\in\N$
there exists a unique section $\Phi_{\nu}$
in the endomorphism bundle of $\xi_{\nu}$
such that
\[
\omega'=g'\big(\Phi_{\nu}(\,.\,),\,.\,\big)
\qquad\text{on}\quad\xi_{\nu}
\;.
\]
We obtain complex structures $j_{\nu}$
on $(\xi_{\nu},\omega')$ by setting
\[
j_{\nu}:=\Phi_{\nu}\circ\big(\sqrt{-\Phi_{\nu}^2}\big)^{-1}
\]
so that
\[
g_{j_{\nu}}:=\omega'(\,.\,,j_{\nu}\,.\,)
\qquad\text{on}\quad\xi_{\nu}
\]
defines a bundle metric on $\xi_{\nu}$.

On the product $\R\times M'$ we consider
the non-degenerate $2$-forms
\[
\eta_{\nu}:=\rmd t\wedge\alpha_{\nu}+\omega'
\]
denoting the $\R$-coordinate by $t$.
Notice that the exterior differentials are equal to
\[
\rmd\eta_{\nu}=-\rmd t\wedge\omega'
\;.
\]
For all $\nu\in\N$ we define a unique endomorphism field
$\Psi_{\nu}$ on $\R\times M'$ by requiring
$\Psi_{\nu}$ to be $\R$-translation invariant
such that the restriction of $\Psi_{\nu}$ to $\xi_{\nu}$
equals $\Phi_{\nu}$ and such that
\[
\Psi_{\nu}(\partial_t)=R_{\nu}
\quad\text{and}\quad
\Psi_{\nu}(R_{\nu})=-\partial_t
\;,
\]
where $R_{\nu}$ denotes the Reeb vector field
of $\alpha_{\nu}$.
Taking the splitting
\[
T\big(\R\times M'\big)=
\R\partial_t\oplus\R R_{\nu}\oplus\xi_{\nu}
\]
into account we get
\[
\Psi_{\nu}=i_{\nu}\oplus\Phi_{\nu}
\;.
\]
Therefore,
\[
J_{\nu}:=
\Psi_{\nu}\circ\big(\sqrt{-\Psi_{\nu}^2}\big)^{-1}=
i_{\nu}\oplus j_{\nu}
\]
is the unique almost complex structure on $\R\times M'$
that is $\R$-translation invariant, restricts to $j_{\nu}$ on
$\xi_{\nu}$, and sends $\partial_t$ to $R_{\nu}$.
Because of
\[
\alpha_{\nu}=-\rmd t\circ J_{\nu}
\]
the bilinear form
\[
\eta_{\nu}(\,.\,,J_{\nu}\,.\,)=
\rmd t\otimes\rmd t+\alpha_{\nu}\otimes\alpha_{\nu}+
g_{j_{\nu}}
\]
is a metric on $\R\times M'$.

Similarly, we define a sequence of metrics
\[
g_{\nu}:=
\rmd t\otimes\rmd t+\alpha_{\nu}\otimes\alpha_{\nu}+
g'|_{\xi_{\nu}}
\]
on $\R\times M'$,
where
\[
g'|_{\xi_{\nu}}(v,w)=
g'\Big(v-\alpha_{\nu}(v)R_{\nu}, w-\alpha_{\nu}(w)R_{\nu}\Big)
\]
for tangent vectors $v,w$ parallel to $M'$.
We remark that
\[
\eta_{\nu}=g_{\nu}\big(\Psi_{\nu}(\,.\,),\,.\,\big)
\]
on $\R\times M'$ and that the equation characterizes
$\Psi_{\nu}$ uniquely.

\begin{lem}
\label{lem:convofaimplstheoneofj}
 We assume that $\alpha$
 satisfies condition \eqref{lowerbound}.
 If $\alpha_{\nu}\ra\alpha_0$ converges in $C^{\infty}_{\loc}$,
 then $\alpha_0$ is a contact form and
 the uniquely associated sequence
 of almost complex structures $J_{\nu}\ra J_0$
 converges in $C^{\infty}_{\loc}$
 to the almost complex structure
 $J_0$ that is defined by $\alpha_0$
 via the above construction.
\end{lem}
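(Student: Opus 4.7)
My plan is to separate the two assertions of the lemma: first, that the limit $\alpha_{0}$ is itself a contact form, and second, that the associated almost complex structures $J_{\nu}$ converge in $C^{\infty}_{\loc}$ to $J_{0}$. For the contact condition, I would exploit that every $\varphi_{\nu}$ is simultaneously an isometry of $g'$ and an odd-symplectomorphism of $(M',\omega')$. Odd-symplecticity gives $\rmd\alpha_{\nu}=\varphi_{\nu}^{*}\omega'=\omega'$ for all $\nu$, hence $\rmd\alpha_{0}=\omega'$ in the limit, and so $\ker\rmd\alpha_{0}=\ker\omega'$. It then remains to check that $\alpha_{0}$ is non-vanishing on this kernel, and this will follow by passing \eqref{lowerbound} to the limit: for $v\in\ker\omega'$, isometry of $\varphi_{\nu}$ gives
\[
|\alpha_{\nu}(v)|=|\alpha(\varphi_{\nu*}v)|\geq c\,|\varphi_{\nu*}v|_{g'}=c\,|v|_{g'},
\]
and the $C^{0}$-limit yields $|\alpha_{0}(v)|\geq c|v|_{g'}$ on $\ker\omega'$, so that $\alpha_{0}\wedge(\rmd\alpha_{0})^{n-1}\neq 0$.

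For the convergence of the almost complex structures I would replace the contact-plane definition, which depends on $\nu$ through $\xi_{\nu}$, by the global pointwise characterisation $\eta_{\nu}=g_{\nu}(\Psi_{\nu}(\,.\,),\,.\,)$ on $\R\times M'$ introduced in Section \ref{subsec:indconvcpxstr}. Both $\eta_{\nu}=\rmd t\wedge\alpha_{\nu}+\omega'$ and $g_{\nu}=\rmd t\otimes\rmd t+\alpha_{\nu}\otimes\alpha_{\nu}+g'|_{\xi_{\nu}}$ are built from the $1$-jet of $\alpha_{\nu}$ (together with the fixed data $\omega'$ and $g'$) by fibrewise smooth operations; in particular the Reeb vector field $R_{\nu}$, and hence the projection onto $\xi_{\nu}$, is a smooth function of that $1$-jet for any contact $\alpha_\nu$. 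Consequently $\alpha_{\nu}\ra\alpha_{0}$ in $C^{\infty}_{\loc}$ forces $\eta_{\nu}\ra\eta_{0}$ and $g_{\nu}\ra g_{0}$ in $C^{\infty}_{\loc}$, and since $g_{\nu}$ is positive-definite the equation $\eta_{\nu}=g_{\nu}(\Psi_{\nu}(\,.\,),\,.\,)$ expresses $\Psi_{\nu}$ as a smooth pointwise linear-algebra operation in these two tensors. Therefore $\Psi_{\nu}\ra\Psi_{0}$ in $C^{\infty}_{\loc}$, and the convergence of $J_{\nu}=\Psi_{\nu}\circ(\sqrt{-\Psi_{\nu}^{2}})^{-1}$ reduces to pushing the functional calculus $\Psi\mapsto\Psi\circ(\sqrt{-\Psi^{2}})^{-1}$ through the limit.

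The hard part will be securing the uniform spectral bounds that make this last step smooth. Explicitly, $A\mapsto A^{-1/2}$ is smooth only on the open set of positive-definite symmetric endomorphisms whose spectrum sits in a fixed compact subinterval of $(0,\infty)$, so I need the eigenvalues of $-\Psi_{\nu}^{2}$ to remain in such a subinterval uniformly in $\nu$ and in the base point of $\R\times M'$. On the span of $\partial_{t}$ and $R_{\nu}$ the eigenvalues of $-\Psi_{\nu}^{2}$ are identically $1$, so only the $\xi_{\nu}$-part, with eigenvalues of $-\Phi_{\nu}^{2}$, needs attention; here the pull-back structure $\alpha_{\nu}=\varphi_{\nu}^{*}\alpha$ enters decisively. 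Because $\varphi_{\nu}$ is an isometry, the eigenvalues of $-\Phi_{\nu}^{2}$ at $p$ coincide with those of $-\Phi^{2}$ at $\varphi_{\nu}(p)$, and the argument inside the proof of Lemma \ref{unifequiv1} already shows, using only condition \eqref{lowerbound} and compactness of the base $M$, that these eigenvalues admit positive uniform upper and lower bounds independent of the base point. These bounds therefore hold uniformly in $\nu$ and persist for $\Psi_{0}$. Combining the uniform spectral bounds with the $C^{\infty}_{\loc}$-convergence $\Psi_{\nu}\ra\Psi_{0}$ then yields the claimed $C^{\infty}_{\loc}$-convergence $J_{\nu}\ra J_{0}$, with $J_{0}$ being precisely the almost complex structure associated to $\alpha_{0}$ by the same construction.
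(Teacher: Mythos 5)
Your proof is correct and follows essentially the same route as the paper's: the same use of condition \eqref{lowerbound} together with the isometry and odd-symplectomorphism properties of $\varphi_{\nu}$ to establish contactness of $\alpha_0$, the same passage through the global endomorphism field $\Psi_{\nu}$ characterised by $\eta_{\nu}=g_{\nu}(\Psi_{\nu}(\,.\,),\,.\,)$, and the same reduction of $J_{\nu}\ra J_0$ to functional calculus for $\Psi\mapsto\Psi\circ(\sqrt{-\Psi^2})^{-1}$. The one place you go beyond the paper is the last step: you establish a \emph{uniform} (in $\nu$ and in the base point) spectral bound for $-\Psi_{\nu}^2$ via the conjugacy $\Phi_{\nu}|_p=(T\varphi_{\nu})^{-1}\Phi|_{\varphi_{\nu}(p)}T\varphi_{\nu}$ and the bounds from the proof of Lemma \ref{unifequiv1}, whereas the paper simply invokes a local power-series expansion with $\nu$-independent coefficients; your bound is more than $C^{\infty}_{\loc}$-convergence strictly requires (on any fixed compact set, $\Psi_{\nu}\ra\Psi_0$ already confines the spectra to a compact subinterval of $(0,\infty)$ for large $\nu$), but it is correct and makes the argument more self-contained.
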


\begin{proof}
 First of all observe that
 $\rmd\alpha_{\nu}=\omega'$
 converges to $\rmd\alpha_0$
 in $C^{\infty}_{\loc}$.
 Therefore,
 $\rmd\alpha_0=\omega'$ and
 $\eta_{\nu}$ converges to
 $\eta_0:=\rmd t\wedge\alpha_0+\omega'$
 in $C^{\infty}_{\loc}$.
 Using condition \eqref{lowerbound} we see
 that $|\alpha_{\nu}(v)|\geq c|v|_{g'}$
 for all $v\in\ker\omega'$
 because all $\varphi_{\nu}\in G$
 are isometries.
 In other words the limiting $1$-form
 $\alpha_0$ is a contact form.
 
 We want to show that $R_{\nu}$
 converges to the Reeb vector field $R_0$
 of $\alpha_0$.
 For that we observe $R_{\nu}$ is the unique
 $\R$-invariant vector field on $\R\times M'$
 such that $\iota_{R_{\nu}}\eta_{\nu}=-\rmd t$.
 Choosing local coordinates $x^1=t, x^2, \ldots, x^{2n}$
 we get $R^j=-\eta^{1j}$ for the components
 of $R_{\nu}$, where $\eta^{ij}$ is the inverse
 of the matrix of the $\eta_{\nu}$-coefficients.
 Hence, $R_{\nu}\ra R_0$ in $C^{\infty}_{\loc}$.
 
 This implies that $g'|_{\xi_{\nu}}$ converges
 in $C^{\infty}_{\loc}$ to $g'|_{\xi_0}$,
 which is defined with respect to the
 contact structure $\xi_0=\ker\alpha_0$.
 In order to prove this
 we denote the projection of $TM'$
 onto $\xi_{\nu}$ along the Reeb vector field
 $R_{\nu}$ by $\pi_{\xi_{\nu}}$.
 Setting $P_{\nu}:=0\oplus\pi_{\xi_{\nu}}$ on
 $T(\R\times M')$ we obtain in local coordinates
 $\gamma_{ij}=P_i^kP_j^{\ell}(g')_{k\ell}$
 for the metric coefficients of $g'|_{\xi_{\nu}}$
 ignoring the subscript $\nu$ in the notation for the
 coefficients of $P_{\nu}$.
 Because of $\pi_{\xi_{\nu}}(v)=v-\alpha_{\nu}(v)R_{\nu}$,
 which locally reads as $P_i=\partial_i-\alpha_iR$,
 we get $P_{\nu}\ra P_0$ in $C^{\infty}_{\loc}$,
 where $P_0$ is the projection defined with respect to
 $\alpha_0$.
 Therefore,
 $g'|_{\xi_{\nu}}\ra g'|_{\xi_0}$ in $C^{\infty}_{\loc}$
 as claimed.
 
 We claim that $\Psi_{\nu}$ converges to $\Psi_0$
 in $C^{\infty}_{\loc}$,
 where $\Psi_0$ is defined via the preliminary
 construction with respect to $\alpha_0$.
 Observe that $g_{\nu}$ converges to
 $\rmd t\otimes\rmd t+\alpha_0\otimes\alpha_0+g'|_{\xi_0}$
 in $C^{\infty}_{\loc}$.
 Because of $\eta_{\nu}=g_{\nu}\big(\Psi_{\nu}(\,.\,),\,.\,\big)$
 we get $\Psi_i^j=\eta_{i\ell}\,g^{\ell j}$
 for local representations of the $(1,1)$-tensors $\Psi_{\nu}$.
 Here the right hand side is the product
 of the $\eta_{\nu}$-coefficients
 and the inverse of the metric coefficients of $g_{\nu}$.
 As both converge we get
 $\Psi_{\nu}\ra \Psi_0$ in $C^{\infty}_{\loc}$.
 
 It remains to show that $J_{\nu}\ra J_0$
 converges in $C^{\infty}_{\loc}$,
 where the almost complex structure $J_0$
 is uniquely characterized by
 the preliminary construction.
 But this follows because
 \[
 J_{\nu}=
 \Psi_{\nu}\circ\big(\sqrt{-\Psi_{\nu}^2}\big)^{-1}
 \]
 locally can be expanded in a power series
 in $\Psi_{\nu}$ with coefficients being independent
 of $\nu$.
 This proves $C^{\infty}_{\loc}$-convergence of $J_{\nu}$
 and hence the lemma.
\end{proof}

\begin{rem}
\label{rem:remonckbounds}
In order to conclude
that $J_0$ can be constructed via $\alpha_0$
in the above proof,
in which situation we will simply write $(\alpha_0,J_0)$,
one needs convergence in $C^1_{\loc}$
because we have to differentiate the limiting
$1$-form $\alpha_0$.
Apart from that the above proof goes through
in the case of $C^k_{\loc}$-convergence
for all $k\in\N$.
Combined with Corollary \ref{cor:concludwithckonly}
this implies that if $\alpha$ has a global $C^k$-bound
for $k\in\N$ at least $2$,
then a subsequence of $(\alpha_{\nu},J_{\nu})$
can be selected
that converges in $C^{k-1}_{\loc}$ to $(\alpha_0,J_0)$.
It follows that
$\alpha_0$ admits a global $C^{k-1}$-bound
and satisfies \eqref{lowerbound}.
\end{rem}

\begin{rem}
\label{rem:assalmcomplstrwitha}
 The almost complex structure $J$
 introduced in Section \ref{subsec:almostcpxstr}
 equals
 \[
 J=
 \Psi\circ\big(\sqrt{-\Psi^2}\big)^{-1}
 \;,
 \]
 where the endomorphism field $\Psi$ on $\R\times M'$
 is uniquely determined by
 \[
 \eta=\Big(\rmd t\otimes\rmd t+\alpha\otimes\alpha+g'|_{\xi}\Big)
 \big(\Psi(\,.\,),\,.\,\big)
 \]
 setting $\eta=\rmd t\wedge\alpha+\omega'$.
 We consider the sequence of diffeomorphisms
 $F_{\nu}=a_{\nu}\times\varphi_{\nu}$ on $\R\times M'$,
 where $a_{\nu}$ denotes the translation
 by the real number $a_{\nu}$, 
 and claim that
 \[
 J_{\nu}=F_{\nu}^*J
 \;.
 \]
 Indeed,
 $F_{\nu}^*\eta=\eta_{\nu}$ and
 \[
 F_{\nu}^*\big(\rmd t\otimes\rmd t+\alpha\otimes\alpha+g'|_{\xi}\big)
 =g_{\nu}
 \]
 because of the following observations:
 Recall that $\pi_{\xi}(v)=v-\alpha(v)R$
 and that $g'|_{\xi}=(\pi_{\xi})^*g'$
 so that because of
 $\pi_{\xi}\circ T\varphi_{\nu}=T\varphi_{\nu}\circ\pi_{\xi_{\nu}}$
 we get
 \[
 \varphi_{\nu}^*(g'|_{\xi})=
 \varphi_{\nu}^*\pi_{\xi}^*g'=
 \pi_{\xi_{\nu}}^*\varphi_{\nu}^*g'=
 \pi_{\xi_{\nu}}^*g'=
 g'|_{\xi_{\nu}}
 \;.
 \]
 Therefore,
 \[
 \eta_{\nu}=g_{\nu}
 \big(F_{\nu}^*\Psi(\,.\,),\,.\,\big)
 \;,
 \]
 which characterizes $\Psi_{\nu}$ uniquely.
 In other words $\Psi_{\nu}=F_{\nu}^*\Psi$.
 This implies the claim
 because the eigenvalues of a matrix
 are invariant under conjugations.
\end{rem}

%%%%%%%%%%%%%%%%%%%%%%%%%%%%%%%%%%%
%%%%%%%%%%%%%%%%%%%%%%%%%%%%%%%%%%%

\section{Compactness\label{sec:comp}}

We consider a virtually contact structure
$\big(\pi\co M'\ra M, \alpha, \omega, g\big)$
together with the associated almost complex structure
$J$ on $\R\times M'$ constructed in
Sections \ref{subsec:theindcomstr} and \ref{subsec:almostcpxstr}.
We assume that the covering $\pi$ is regular,
i.e.\ that the group of deck transformations $G$
acts transitively on the fibres of $\pi$.
Furthermore,
we assume that any sequence
$\alpha_{\nu}=\varphi_{\nu}^*\alpha$,
$\varphi_{\nu}\in G$, has a in $C^{\infty}_{\loc}$
converging subsequence.
In view of Lemma \ref{lem:convofaimplstheoneofj}
the associated subsequence of $J_{\nu}$ converges
in $C^{\infty}_{\loc}$ as well.
In the same manner we assume
that for any accumulation point $\alpha_0$ of $\alpha_{\nu}$
the sequence
$\varphi_{\nu}^*\alpha_0$
and the associated sequence
of almost complex structures
have converging subsequences,
cf.\ Remark \ref{rem:remonckbounds}.

Let
 \[
 u_{\nu}=(a_{\nu},f_{\nu})\co
 (\D,\partial\D)\lra
 (\R\times M',\{0\}\times L)
 \]
be a sequence of $J$-holomorphic discs
with boundary in an open relatively compact
subset $K_L$ of a maximally $J$-totally real
submanifold $L\subset M'$.
We assume that the Hofer energy
 \[
 E_{\hh}(u):=\sup_{\tau}\int_{\D}u^*\rmd(\tau\alpha)
 \]
is uniformly bounded by $E>0$ for all $u=u_{\nu}$,
where the supremum is taken over all
smooth increasing functions
$\tau\co\R\ra[0,1]$.
By the maximum principle,
the symplectic energy
 \[
 E(u)=\int_{\D}u^*\rmd(\rme^t\alpha)
 \]
is uniformly bounded by $E$ for all $u=u_{\nu}$ too.

In this section we will carry out
a bubbling off analysis which is largely analogous
to \cite[Section 6]{gz10} and \cite[p.~543-548]{gz13}.
But it is necessary to adapt the arguments
to the present situation of a non-compact contact manifold $M'$.

%%%%%%%%%%%%%%%%%%%%%%%%%%%%%%%%%%%

\subsection{Bubbling off analysis\label{subsec:bubboffanal}}

If the maximum of the $|a_{\nu}|$ over $\D$
is uniformly bounded,
then by Corollary \ref{cor:lowerboundgromconv}
the sequence $u_{\nu}$ has a Gromov convergent subsequence 
that converges to a stable holomorphic disc
whose underlying bubble tree consists of discs only.

If the sequence of maxima is not bounded,
then we find a sequence $\zeta_{\nu}$ in $\D$
such that a subsequence of
$a_{\nu}(\zeta_{\nu})$ tends to $-\infty$.
By the mean value theorem we find a point
$z_{\nu}$ on the line segment connecting
$\zeta_{\nu}$ with $1$ in $\D$
such that
\[
a_{\nu}(\zeta_{\nu})=
T_{z_{\nu}}a_{\nu}\cdot (\zeta_{\nu}-1)
\]
using $a_{\nu}(1)=0$.
Hence,
\[
|a_{\nu}(\zeta_{\nu})|\leq
2\;\big|T_{z_{\nu}}u_{\nu}\big|_{g_0'}
\]
with respect to the complete metric
$g_0'=\rmd t\otimes\rmd t+g'$,
so that the sequence
$|Tu_{\nu}|_{g_0'}$
is not uniformly bounded.
Passing to a further subsequence
and writing
\[
|\nabla u_{\nu}|_{g_0'}=
\sqrt{|\partial_xu_{\nu}|_{g_0'}^2+|\partial_yu_{\nu}|_{g_0'}^2}
\]
instead of $|Tu_{\nu}|_{g_0'}$
we can assume that
\[
R_{\nu}:=
\max_{\D}|\nabla u_{\nu}|_{g_0'}=
|\nabla u_{\nu}(z_{\nu})|_{g_0'}
\lra\infty
\]
for a sequence $z_{\nu}\ra z_0$ in $\D$.
In the following we will distinguish the cases
whether the limit point $z_0$
lies in the interior $B_1(0)\subset\C$
or on the boundary $\partial\D$ of $\D$.

{\bf Case 1:}
$z_0\in B_1(0)$.
We can assume that no $z_{\nu}$
lies on the boundary $\partial\D$.
We choose $\varepsilon>0$
such that $B_{\varepsilon}(z_{\nu})\subset B_1(0)$
for all $\nu\in\N$.
Let $\DD\subset M'$ be a fundamental domain of the covering $\pi$
that contains the base point $o\in M'$
and choose a sequence $\varphi_{\nu}\in G$ of
deck transformations such that
$\varphi_{\nu}^{-1}$ maps $f_{\nu}(z_{\nu})$
into the closure of $\DD$,
see \cite[p.~201]{cha06}.
We consider the rescaled sequence
$v_{\nu}=(b_{\nu},h_{\nu})$ defined via
\[
b_{\nu}(z):=a_{\nu}\big(z_{\nu}+z/R_{\nu}\big)-a_{\nu}(z_{\nu})
\]
and
\[
h_{\nu}(z):=\varphi_{\nu}^{-1}\Big(f_{\nu}\big(z_{\nu}+z/R_{\nu}\big)\Big)
\]
for all $z\in B_{R_{\nu}\varepsilon}(0)$
so that $v_{\nu}(0)\in\{0\}\times\bar{\DD}$.
Moreover, because $F_{\nu}$ is an isometry with respect to
the metric $g_0'$ we have that
$|\nabla v_{\nu}(0)|_{g_0'}=1$
and that
$|\nabla v_{\nu}|_{g_0'}\leq1$
uniformly on $B_{R_{\nu}\varepsilon}(0)$.
Observe that $v_{\nu}$ is obtained from $u_{\nu}$
by a reparametrisation with a M\"obius transformation
and the composition with the inverse of the
diffeomorphism $F_{\nu}=a_{\nu}(z_{\nu})\times\varphi_{\nu}$.
Therefore,
$v_{\nu}$ is $J_{\nu}$-holomorphic
with respect to the almost complex structure
$J_{\nu}=F_{\nu}^*J$ on $\R\times M'$
(see Remark \ref{rem:assalmcomplstrwitha}),
which is associated to $\alpha_{\nu}=\varphi_{\nu}^*\alpha$,
cf.\ Section \ref{subsec:indconvcpxstr}.
We can assume that a further subsequence of $u_{\nu}$
is selected for which the sequence $(\alpha_{\nu},J_{\nu})$
converges in $C^{\infty}_{\loc}$ to $(\alpha_0,J_0)$.
We finally remark that the Hofer energy
\[
\sup_{\tau}\,
\int_{B_{R_{\nu}\varepsilon}(0)}
v_{\nu}^*\,\rmd(\tau\alpha_{\nu})\leq E
\]
is uniformly bounded,
where the supremum is taken over all
smooth increasing functions
$\tau\co\R\ra[0,1]$.

Let $k$ be a natural number
and choose $\nu_0\in\N$ such that
the closure of $B_k(0)$
is contained in
$B_{R_{\nu}\varepsilon}(0)$
for all $\nu\geq\nu_0$.
For all $z\in B_k(0)$ the $g_0'$-distance between
$v_{\nu}(0)$ and $v_{\nu}(z)$
is bounded by $k$ as the uniform gradient bound
$|\nabla v_{\nu}|_{g_0'}\leq1$ shows.
By \cite[p.~117/18 and p.~201]{cha06}
the fundamental domain $\DD$
can be chosen such that $\pi(\DD)$
equals the complement of the cut locus
of $\pi(o)$ in $(M,g)$,
where $o$ denotes the base point of $M'$.
Because the diameter $d_0$ of $(M,g)$ is finite
(as $M$ is compact)
the $g'$-distance between $o$ and $h_{\nu}(0)$
is bounded by $d_0$.
Therefore,
for all $\nu\geq\nu_0$ the discs
\[
v_{\nu}\big(\overline{B_k(0)}\big)
\subset
[-k,0]\times
\overline{B_{d_0+k}(o)}
\]
are contained in the product of the interval
$[-k,0]$ and the closure of the 
$g'$-geodesic ball $B_{d_0+k}(o)$,
which by the Hopf--Rinow theorem is compact.

Because the restriction of
$(\alpha_{\nu},J_{\nu})$
to $[-k,0]\times\overline{B_{d_0+k}(o)}$
converges uniformly with all covariant derivatives
to the restriction of $(\alpha_0,J_0)$,
with elliptic regularity we see that
$v_{\nu}|_{\overline{B_k(0)}}$
has a subsequence $v_{\nu_k}$
that converges uniformly with all derivatives,
see \cite[Theorem B.4.2]{mdsa04} and \cite[p.~559]{gz10}.
Inductively, using a diagonal sequence argument
we see that there exists a converging subsequence
(again denoted by)
\[
v_{\nu}\lra v
\qquad\text{in}\quad
C^{\infty}_{\loc}(\C)
\]
that converges to a non-constant
$J_0$-holomorphic map $v\co\C\ra\R\times M'$.
By Fatou's lemma we find for all $k\in\N$
and all smooth increasing functions $\tau\co\R\ra[0,1]$
\[
\int_{B_k(0)}v^*\,\rmd(\tau\alpha_0)=
\int_{B_k(0)}
\lim_{\nu\ra\infty}
v_{\nu}^*\,\rmd(\tau\alpha_{\nu})\leq 
\liminf_{\nu \to \infty} \int_{B_k(0)} v_\nu^*\,\rmd(\tau\alpha_{\nu})= E
\]
so that $v$ is a finite energy plane with Hofer energy
\[
\sup_{\tau}
\int_{\C}v^*\,\rmd(\tau\alpha_0)\leq E
\]
bounded by $E$.
This finishes our considerations for $z_0\in B_1(0)$.

{\bf Case 2:}
$z_0\in\partial\D$.
We identify $\D\setminus\{-z_0\}$
with the closed upper half-plane $\Hp$
conformally such that
$(z_0,0,-z_0)$ corresponds to $(0,\rmi,\infty)$.
Under this identification we regard $u_{\nu}$
as a $J$-holomorphic map
\[
u_{\nu}\co
(\Hp,\R)\lra
\big(\R\times M',\{0\}\times L\big)
\;.
\]
The sequence of the corresponding
bubble points again denoted by $z_{\nu}\ra 0$
can be assumed to be contained in
\[
\D^+:=\D\cap\Hp
\;.
\]
Moreover, we find a positive constant $c$
such that for all $z\in\D^+$ and for all $\nu\in\N$
\[
\frac1cR_{\nu}\leq|\nabla u_{\nu}(z_{\nu})|_{g_0'}
\quad
\text{and}
\quad
|\nabla u_{\nu}(z)|_{g_0'}\leq cR_{\nu}
\;.
\]
By conformal equivalence the Hofer energy stays unchanged
so that
\[
\sup_{\tau}\,
\int_{\D^+}
u_{\nu}^*\,\rmd(\tau\alpha)\leq E
\]
for all $\nu\in\N$,
where the supremum is taken over all
smooth increasing functions
$\tau\co\R\ra[0,1]$.
Passing to a suitable subsequence
we can assume that there exists $\varepsilon>0$
such that for all $\nu\in\N$
\[
B_{\varepsilon}^+(z_{\nu}):=B_{\varepsilon}(z_{\nu})\cap\Hp
\]
is contained in $\D^+$ and
that $R_{\nu}y_{\nu}\ra\varrho\in[0,\infty]$
writing $z_{\nu}=x_{\nu}+\rmi y_{\nu}$.

{\bf Case 2(a):}
$\varrho=\infty$.
Exactly as in Case 1 one defines a rescaled sequence
$v_{\nu}$ of $J_{\nu}$-holomorphic maps on
$B_{R_{\nu}\varepsilon}(0)\cap\{y\geq-R_{\nu}y_{\nu}\}$
which has the property that
$v_{\nu}(0)$ is contained in $\{0\}\times\bar{\DD}$
and that $|\nabla v_{\nu}(0)|_{g_0'}$
is bounded from below by $1/c$ independently of $\nu$.
Moreover, we have that
$|\nabla v_{\nu}|_{g_0'}\leq c$
uniformly on
$B_{R_{\nu}\varepsilon}(0)\cap\{y\geq-R_{\nu}y_{\nu}\}$
and that the Hofer energy with respect to $\alpha_{\nu}$
satisfies
$\sup_{\nu\in\N}E_{\hh}(v_{\nu})\leq E$.
Arguing as in Case 1 one selects a
$C^{\infty}_{\loc}(\C)$-converging subsequence
$v_{\nu}\ra v$ that converges to a
non-constant $J_0$-holomorphic
finite energy plane $v$ whose Hofer energy
is taken with respect to $\alpha_0$.

{\bf Case 2(b):}
$\varrho<\infty$.
This time we define a rescaled sequence $v_{\nu}$
by
\[
v_{\nu}(z):=u_{\nu}\big(x_{\nu}+z/R_{\nu}\big)
\]
for all $z\in B_{R_{\nu}\varepsilon}^+\big(\rmi R_{\nu}y_{\nu}\big)$,
where $z_{\nu}=x_{\nu}+\rmi y_{\nu}$.
Observe that 
$B_{R_{\nu}\varepsilon}^+\big(\rmi R_{\nu}y_{\nu}\big)$
defines an exhausting sequence of
open subsets of $\Hp$.
Furthermore,
we have that
$|\nabla v_{\nu}(\rmi R_{\nu}y_{\nu})|_{g_0'}$
is greater than or equal to $1/c$
independently of $\nu$,
that 
$|\nabla v_{\nu}|_{g_0'}\leq c$
uniformly on
$B_{R_{\nu}\varepsilon}^+\big(\rmi R_{\nu}y_{\nu}\big)$,
and that the Hofer energy with respect to $\alpha$
satisfies
$\sup_{\nu\in\N}E_{\hh}(v_{\nu})\leq E$.
We remark that 
$v_{\nu}(x)$ is contained in $\{0\}\times K_L$
for all $x\in\R$.

For given $k\in\N$ and $\nu$ such that
$B^+_k(0)$ is contained in
$B_{R_{\nu}\varepsilon}^+\big(\rmi R_{\nu}y_{\nu}\big)$
we find that the $g_0'$-distance from $v_{\nu}(z)$
to $\{0\}\times K_L$ is bounded by $ck$
because of the above uniform gradient bound.
Hence, for all $\nu$ sufficiently large
$v_{\nu}\big(B^+_k(0)\big)$ is a subset of the
$ck$-neighbourhood of $\{0\}\times K_L$
with respect to $g_0'$,
whose closure is compact.
Invoking
\cite[Theorem B.4.2]{mdsa04} and \cite[p.~559]{gz10}
as in Case 1 we find a 
$C^{\infty}_{\loc}(\Hp)$-converging subsequence
$v_{\nu}\ra v$, where
\[
v\co(\Hp,\R)\lra\big(\R\times M',\{0\}\times\overline{K}_L\big)
\]
is a boundary condition preserving
non-constant $J$-holomorphic
finite energy half-plane of Hofer energy
$E_{\hh}(v)\leq E$ with respect to $\alpha$.

%%%%%%%%%%%%%%%%%%%%%%%%%%%%%%%%%%%

\subsection{A finite energy cylinder\label{subsec:afinenergcyl}}

Let $v$ be a non-constant
$J_0$-holomorphic finite energy plane
with Hofer energy $E_{\hh}(v)\leq E$ with respect to $\alpha_0$
which can be obtained as in Section \ref{subsec:bubboffanal}
Case 1 or Case 2(a).
Set
\[
T^1:=\R/2\pi\Z
\;.
\]
Using the conformal map $\R\times T^1\ra\C\setminus\{0\}$
given by $(s,t)\mapsto\rme^{s+\rmi t}$
we regard $v$ as a finite energy cylinder
\[
v\co\R\times T^1\lra\R\times M'
\;.
\]
We claim that $|\nabla v|_{g_0'}$
is bounded globally on $\R\times T^1$.

Arguing by contradiction as in
\cite[Proposition 30]{hof93}
we select sequences
$\varepsilon_{\nu}\ra0$ in $(0,\infty)$,
$z_{\nu}$ on the universal cover $\R\times\R\equiv\C$
with $|z_{\nu}|\ra\infty$, and
$R_{\nu}:=|\nabla v(z_{\nu})|_{g_0'}$,
such that for a subsequence we have
$R_{\nu}\varepsilon_{\nu}\ra\infty$ and
$|\nabla v|_{g_0'}\leq2R_{\nu}$ on
$B_{\varepsilon_{\nu}}(z_{\nu})$ according to
the Hofer lemma \cite[Lemma 26]{hof93}.
Writing $v=(b,h)$
let $\varphi_{\nu}\in G$ be a sequence of
deck transformations whose inverse send
$h(z_{\nu})$ into the closure of the fundamental domain $\DD$.
We define
$u_{\nu}=(a_{\nu},f_{\nu})$ by
\[
a_{\nu}(z):=b\big(z_{\nu}+z/R_{\nu}\big)-b(z_{\nu})
\]
and
\[
f_{\nu}(z):=\varphi_{\nu}^{-1}\Big(h\big(z_{\nu}+z/R_{\nu}\big)\Big)
\]
for all $z\in B_{R_{\nu}\varepsilon_{\nu}}(0)$.
As observed in Section \ref{subsec:bubboffanal}
Case 1 we have that
$u_{\nu}(0)\in\{0\}\times\bar{\DD}$,
$|\nabla u_{\nu}(0)|_{g_0'}=1$,
and that
$|\nabla u_{\nu}|_{g_0'}\leq2$
uniformly on $B_{R_{\nu}\varepsilon_{\nu}}(0)$.
Moreover,
$u_{\nu}$ is $J_0^{\nu}$-holomorphic
with respect to the almost complex structure
$J_0^{\nu}=F_{\nu}^*J_0$ on $\R\times M'$,
where
$F_{\nu}=b(z_{\nu})\times\varphi_{\nu}$,
and the Hofer energy
with respect to
$\alpha_0^{\nu}=\varphi_{\nu}^*\alpha_0$
\[
\sup_{\tau}\,
\int_{B_{R_{\nu}\varepsilon_{\nu}}(0)}
u_{\nu}^*\,\rmd(\tau\alpha_0^{\nu})\leq E
\]
is uniformly bounded,
where the supremum is taken over all
smooth increasing functions
$\tau\co\R\ra[0,1]$.
Similarly,
\[
\int_{B_{R_{\nu}\varepsilon_{\nu}}(0)}
f_{\nu}^*\,\rmd\alpha_0^{\nu}=
\int_{B_{\varepsilon_{\nu}}(0)}
h^*\,\rmd\alpha_0
\lra 0
\;.
\]
Additionally, we can assume that a subsequence
of $(\alpha_0^{\nu},J_0^{\nu})$
converges in $C^{\infty}_{\loc}$
to $(\alpha_{\infty},J_{\infty})$.
Using the argumentation from
Section \ref{subsec:bubboffanal} Case 1 
we find a $C^{\infty}_{\loc}(\C)$-converging
subsequence $u_{\nu}\ra u$,
where $u$ is a non-constant
$J_{\infty}$-holomorphic finite energy plane
of Hofer energy
\[
\sup_{\tau}
\int_{\C}u^*\,\rmd(\tau\alpha_{\infty})\leq E
\;.
\]
For the {\bf contact area} we obtain
\[
\int_{\C}f^*\,\rmd\alpha_{\infty}=0
\]
writing $u=(a,f)$ because for all $k\in\N$
\[
\int_{B_k(0)}f^*\,\rmd\alpha_{\infty}=
\int_{B_k(0)}
\lim_{\nu\ra\infty}
f_{\nu}^*\,\rmd\alpha_0^{\nu}\leq
\liminf_{\nu\ra\infty}
\int_{B_{R_{\nu}\varepsilon_{\nu}}(0)}
f_{\nu}^*\,\rmd\alpha_0^{\nu}=0
\]
by Fatou's lemma.
Using \cite[Lemma 28]{hof93},
which holds for non-compact
contact manifolds $(M',\alpha_{\infty})$ as well,
this is a contradiction to $u$ being non-constant.
Therefore, the gradient $|\nabla v|_{g_0'}$
of the finite energy cylinder $v=(b,h)$ is globally bounded.

Modifying the proof of \cite[Theorem 31]{hof93}
we choose a sequence $\varphi_{\nu}\in G$
such that $\varphi_{\nu}^{-1}$ maps $h(\nu,0)$
into the closure of $\DD$.
We define $u_{\nu}=(a_{\nu},f_{\nu})$ via
\[
a_{\nu}(s,t):=
b(s+\nu,t)-b(\nu,0)
\]
and
\[
f_{\nu}(s,t):=\varphi_{\nu}^{-1}\big(h(s+\nu,t)\big)
\;.
\]
Observe that $u_{\nu}(0,0)\in\{0\}\times\bar{\DD}$
and that 
$u_{\nu}$ is $J_0^{\nu}$-holomorphic
with respect to the almost complex structure
$J_0^{\nu}=F_{\nu}^*J_0$ on $\R\times M'$ for
$F_{\nu}=b(\nu,0)\times\varphi_{\nu}$.
The gradient $|\nabla u_{\nu}|_{g_0'}$
is globally bounded on $\R\times T^1$
because $F_{\nu}$ is an isometry with respect to $g_0'$.
Because $J_0^{\nu}$ is associated to
$\alpha_0^{\nu}=\varphi_{\nu}^*\alpha_0$
we remark that the Hofer energy satisfies
\[
\sup_{\tau}\,
\int_{\R\times T^1}
u_{\nu}^*\,\rmd(\tau\alpha_0^{\nu})\leq E
\]
for all $\nu\in\N$.
Moreover, for given $k\in\N$
\[
\int_{[-k,k]\times T^1}
f_{\nu}^*\,\rmd\alpha_0^{\nu}=
\int_{[-k+\nu,k+\nu]\times T^1}
h^*\,\rmd\alpha_0
\lra0
\]
as $\nu$ tends to $\infty$
because the contact area $h^*\,\rmd\alpha_0$
is non-negative (see Section \ref{subsec:almostcpxstr})
and the total integral
\[
\int_{\C}
h^*\,\rmd\alpha_0
\leq E
\]
is bounded by the Hofer energy.
Similarly,
\[
\int_{\{0\}\times T^1}
f_{\nu}^*\,\alpha_0^{\nu}=
\int_{\{\nu\}\times T^1}
h^*\,\alpha_0=
\int_{(-\infty,\nu)\times T^1}
h^*\,\rmd\alpha_0
\]
tends to the contact area
$\int_{\C}h^*\,\rmd\alpha_0$
as $\nu$ tends to $\infty$.
Assuming that a subsequence
of $(\alpha_0^{\nu},J_0^{\nu})$
converges to $(\alpha_{\infty},J_{\infty})$
in $C^{\infty}_{\loc}$
we find as in
Section \ref{subsec:bubboffanal} Case 1 
a $C^{\infty}_{\loc}(\C)$-converging
subsequence $u_{\nu}\ra u$ that converges
to a non-constant $J_{\infty}$-holomorphic
finite energy cylinder $u=(a,f)\co\R\times T^1\ra\R\times M'$
of Hofer energy
\[
\sup_{\tau}
\int_{\R\times T^1}u^*\,\rmd(\tau\alpha_{\infty})\leq E
\;,
\]
vanishing contact area
\[
\int_{\R\times T^1}
f^*\,\rmd\alpha_{\infty}=0
\;,
\]
and action
\[
\int_{\{0\}\times T^1}
f^*\,\alpha_{\infty}=
\int_{\C}h^*\,\rmd\alpha_0
\;,
\]
which by \cite[Lemma 28]{hof93}
does not vanish.
Further, the gradient $|\nabla u|_{g_0'}$
is globally bounded on $\R\times T^1$.
As in the last part of the proof of
\cite[Theorem 31]{hof93}
one shows that a subsequence of
$f_{\nu}(0,\,.\,)$ converges in $C^{\infty}(T^1)$
to a contractible
periodic $\alpha_{\infty}$-Reeb orbit
whose $\alpha_{\infty}$-action
is equal to the contact area
$\int_{\C}h^*\,\rmd\alpha_0$ of $v=(b,h)$.

%%%%%%%%%%%%%%%%%%%%%%%%%%%%%%%%%%%

\subsection{Removal of boundary singularity\label{subsec:remofboundsing}}

In Section \ref{subsec:bubboffanal} Case 2(b)
we obtained a non-constant $J$-holomorphic
finite energy half-plane $v$ of Hofer energy
$E_{\hh}(v)\leq E$ with respect to $\alpha$.
Let $S$ be the strip $\R\times[0,\pi]$,
which is conformally equivalent to $\Hp\setminus\{0\}$
via $(s,t)\mapsto\rme^{s+\rmi t}$.
Using this identification
we regard $v$ as finite energy strip
\[
v\co(S,\partial S)\lra\big(\R\times M',\{0\}\times\overline{K}_L\big)
\;.
\]
We claim that $|\nabla v|_{g_0'}$
is bounded globally on $S$.

Arguing by contradiction  as in
\cite[Theorem 32]{hof93}
we obtain with help
of the Hofer lemma \cite[Lemma 26]{hof93}
$\varepsilon_{\nu}\ra0$ in $(0,\infty)$,
$z_{\nu}=x_{\nu}+\rmi y_{\nu}$ on the strip $S$
with $|x_{\nu}|\ra\infty$, and
$R_{\nu}:=|\nabla v(z_{\nu})|_{g_0'}$,
such that a subsequence satisfies
$R_{\nu}\varepsilon_{\nu}\ra\infty$ and
$|\nabla v|_{g_0'}\leq2R_{\nu}$ on
$B_{\varepsilon_{\nu}}(z_{\nu})$.
Furthermore, we can assume that
$R_{\nu}y_{\nu}\ra\varrho_0\in[0,\infty]$
and
$R_{\nu}(\pi-y_{\nu})\ra\varrho_{\pi}\in[0,\infty]$.
If both $\varrho_0$ and $\varrho_{\pi}$
equal $\infty$ we can argue
as in the first half of Section \ref{subsec:afinenergcyl}
to show that a rescaled sequence $u_{\nu}$
converges in $C^{\infty}_{\loc}(\C)$ to a non-constant
$J_{\infty}$-holomorphic finite energy plane
with respect to $\alpha_{\infty}$ that has vanishing
contact area.
By \cite[Lemma 28]{hof93}
this is a contradiction.
If at least one of the limits $\varrho_0$ or $\varrho_{\pi}$
is finite, where in the second case we precompose $v$
with $z\mapsto-(z-\rmi\pi)$,
we repeat the argumentation from
Section \ref{subsec:bubboffanal} Case 2(b),
i.e.\ the rescaled sequence $u_{\nu}$
converges in $C^{\infty}_{\loc}(\Hp)$ to a non-constant
$J_{\infty}$-holomorphic finite energy half-plane
with respect to $\alpha_{\infty}$ with vanishing
contact area $\int_{\Hp}u^*\rmd\alpha_{\infty}=0$.
This is a contradiction as shown in
the first half of the proof of \cite[Theorem 32]{hof93}.
Hence, the gradient $|\nabla v|_{g_0'}$
of the finite energy strip $v$ is globally bounded.

Finally, with the second
half of the proof of \cite[Theorem 32]{hof93}
(cf.\ \cite[Lemma 6.2]{gz13})
we see that $v$ extends to a holomorphic disc
\[
v\co(\D,\partial\D)\lra\big(\R\times M',\{0\}\times\overline{K}_L\big)
\]
after a suitable conformal change of the parametrization.

%%%%%%%%%%%%%%%%%%%%%%%%%%%%%%%%%%%

\subsection{Aperiodicity and Gromov convergence\label{subsec:aperiodandgromconv}}

We summarize the results of the proceeding section.
Let
$\big(\pi\co M'\ra M, \alpha, \omega, g\big)$
be a virtually contact structure
over a closed connected manifold $M$.
Denote by $J$ the associated almost complex structure
on $\R\times M'$ that is given by the construction in
Section \ref{subsec:theindcomstr} and Section \ref{subsec:almostcpxstr}.
Let $L\subset M'$ be a submanifold such that $\{0\}\times L$
maximally $J$-totally real in $\R\times M'$.
Let
 \[
 u_{\nu}=(a_{\nu},f_{\nu})\co
 (\D,\partial\D)\lra
 (\R\times M',\{0\}\times L)
 \]
be a sequence of $J$-holomorphic discs
with boundary in an open relatively compact
subset $K_L$ of $L$ such that the Hofer energy
 \[
 E_{\hh}(u_{\nu}):=\sup_{\tau}\int_{\D}u_{\nu}^*\rmd(\tau\alpha)
 \]
is uniformly bounded by $E>0$ for all ${\nu}\in\N$,
where the supremum is taken over all
smooth increasing functions
$\tau\co\R\ra[0,1]$.

A {\bf closed characteristic} on $(M,\omega)$
is a compact leaf of the $1$-dimensional
foliation $\ker\omega$.
Taking any orientation on a
closed characteristic it
represents a homology class.
If a non-trivial multiple of a closed characteristic
admits a contractible parametrization
by a loop we will say that the
closed characteristic is {\bf contractible}.

\begin{prop}
 \label{prop:aperioidgromconsum}
 In the situation described above
 we assume that the covering $\pi$ is regular
 and that the $C^3$-norm of $\alpha$ defined in
 Section \ref{subsec:hordcovderiv} is finite.
 If $(M,\omega)$ has no contractible
 closed characteristic,
 then $u_{\nu}$ has a Gromov converging
 subsequence that converges to a stable
 $J$-holomorphic disc with boundary on
 $\overline{K}_L\subset L$ whose underlying
 bubble tree consists of discs only.
\end{prop}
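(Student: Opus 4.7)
The plan is to combine the bubbling off analysis of Sections~\ref{subsec:bubboffanal}--\ref{subsec:remofboundsing} with the non-existence hypothesis on contractible closed characteristics of $(M,\omega)$ to rule out every bubble formation other than disc bubbles. The first dichotomy is whether the sequence $\max_{\D}|a_\nu|$ stays uniformly bounded.

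If $\sup_\nu\max_{\D}|a_\nu|<\infty$, Corollary~\ref{cor:lowerboundgromconv} at once provides a Gromov converging subsequence whose limit is a stable holomorphic disc whose bubble tree consists of discs only, and we are done. If instead $\max_{\D}|a_\nu|\to\infty$, I would run the rescaling trichotomy of Section~\ref{subsec:bubboffanal}. Case~2(b) produces, after passing to a subsequence, a $J$-holomorphic finite energy half-plane with boundary on $\overline{K}_L$; by Section~\ref{subsec:remofboundsing} this extends conformally to a permissible $J$-holomorphic disc bubble. The dangerous cases are Case~1 and Case~2(a), each of which yields a non-constant $J_0$-holomorphic finite energy plane $v$ with $E_{\hh}(v)\leq E$ with respect to some accumulation contact form $\alpha_0=\lim\varphi_\nu^*\alpha$. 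It is precisely here that the $C^3$-bound on $\alpha$ enters: via Corollary~\ref{cor:concludwithckonly}, Lemma~\ref{lem:convofaimplstheoneofj}, and Remark~\ref{rem:remonckbounds} it guarantees $C^2_{\loc}$-convergence of $(\alpha_\nu,J_\nu)$ to a genuine contact pair $(\alpha_0,J_0)$, so that the bubble limits are well defined.

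The next step is to feed $v$ into the finite energy cylinder analysis of Section~\ref{subsec:afinenergcyl}. A further rescaling using deck transformations from $G$ produces, after passing to yet another accumulation contact form $\alpha_\infty$ satisfying $\rmd\alpha_\infty=\omega'$, a non-constant $J_\infty$-holomorphic finite energy cylinder $u=(a,f)$ of vanishing contact area $\int_{\R\times T^1}f^*\rmd\alpha_\infty=0$ and non-vanishing $\alpha_\infty$-action. The asymptotic argument of \cite[Theorem~31]{hof93}, which still applies on the non-compact $(M',\alpha_\infty)$ thanks to the tame geometry built in Sections~\ref{sec:atamegeom}--\ref{sec:holomdiscs}, then produces a contractible periodic $\alpha_\infty$-Reeb orbit $\gamma$ on $M'$.

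To force the contradiction, I would project $\gamma$ to $M$ via $\pi$. Since every accumulation form $\alpha_\infty$ inherits $\rmd\alpha_\infty=\omega'=\pi^*\omega$, its Reeb vector field lies in $\ker\pi^*\omega$, so $\pi\circ\gamma$ is tangent to $\ker\omega$ and parametrises a non-trivial iterate of a closed characteristic of $(M,\omega)$. Contractibility of $\gamma$ in $M'$ descends to contractibility of $\pi\circ\gamma$ in $M$, contradicting the hypothesis and eliminating Cases~1 and~2(a). Iterating the same analysis inside any disc bubble produced by Case~2(b) rules out plane bubbles at every level, so the Gromov limit is a stable $J$-holomorphic disc whose bubble tree consists of discs only. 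The main obstacle is the successive control of the rescaling limits $\alpha_\infty$ under the deck transformation group and the verification that Hofer's asymptotic machinery continues to function on these non-compact contact manifolds; this is exactly what the $C^3$-bound on $\alpha$ together with the uniform estimates of Sections~\ref{sec:atamegeom}--\ref{sec:holomdiscs} are designed to secure.
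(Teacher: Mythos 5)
Your proof captures the paper's key idea---ruling out the plane bubbles of Cases 1 and 2(a) by projecting any resulting periodic Reeb orbit (via $\rmd\alpha_\infty=\omega'=\pi^*\omega$) to a contractible closed characteristic of $(M,\omega)$---but the final step is a genuine gap. Eliminating plane bubbles does not by itself establish that $u_\nu$ Gromov converges: the missing ingredient is a uniform bound on $\max_\D|a_\nu|$, which is a precondition for Corollary~\ref{cor:lowerboundgromconv}. You cannot simply ``iterate inside disc bubbles'' to get the conclusion, because the question whether the $\R$-coordinate drifts off to $-\infty$ at an interior point while all gradient blow-up sits on $\partial\D$ is exactly the point that needs to be ruled out. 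The paper closes this with an explicit contradiction argument: by the maximum principle the minimum of $a_\nu$ is attained at an interior point $\zeta_\nu$, which after a M\"obius reparametrization may be taken to be $0$. Once Cases 1 and 2(a) are excluded, gradient blow-up accumulates only on $\partial\D$, and the uniform lower bound on contact areas of bubbled discs (the adaptation of \cite[Lemma 35]{hof93}) together with the total Hofer energy bound forces only finitely many boundary bubble points $z_0^1,\ldots,z_0^N$. Hence $|\nabla u_\nu|_{g_0'}$ is uniformly bounded on $\D\setminus\bigcup_j B_\varepsilon(z_0^j)$, a mean value argument gives $C^0$-bounds there, and elliptic bootstrapping yields a $C^\infty_{\loc}$-convergent subsequence on the deleted disc $\D\setminus\{z_0^1,\ldots,z_0^N\}$; since $0$ lies in this deleted disc, $a_\nu(0)$ converges, contradicting $a_\nu(0)\to-\infty$. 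This finishing step is indispensable and is absent from your argument.

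A secondary imprecision: the $C^3$-bound has to finance \emph{two} Arzel\`a--Ascoli passages, not one---once for the rescaling limit $(\alpha_0,J_0)$ in Cases 1/2(a), and a second time for the accumulation form $(\alpha_\infty,J_\infty)$ appearing in the gradient-bound argument for the finite energy cylinder in Section~\ref{subsec:afinenergcyl}. Each passage drops regularity by one (Corollary~\ref{cor:concludwithckonly}, Remark~\ref{rem:remonckbounds}), landing the almost complex structures in $C^1_{\loc}$, which is just enough for elliptic regularity (\cite[Remark B.4.3]{mdsa04}) and for Hofer's asymptotic analysis of $C^1$-planes converging to $C^1$-periodic orbits. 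Your statement that the $C^3$-bound yields $C^2_{\loc}$-convergence of $(\alpha_\nu,J_\nu)$ accounts for only the first drop.
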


\begin{proof}
 First we will prove the proposition under the additional
 assumption that all $C^k$-norms of $\alpha$ are bounded.
 By Proposition \ref{prop:arzelaascoli}
 this implies that for any sequence $\varphi_{\nu}\in G$
 a in $C^{\infty}_{\loc}$ converging subsequence of
 $\alpha_{\nu}=\varphi_{\nu}^*\alpha$ can be selected.
 The associated subsequence of almost complex structures
 $J_{\nu}$ on $\R\times M'$ converges in $C^{\infty}_{\loc}$ too,
 see Lemma \ref{lem:convofaimplstheoneofj}.
 The analogues convergence statement with $(\alpha,J)$
 replaced by a resulting limit $(\alpha_0,J_0)$
 of a subsequence of $(\alpha_{\nu},J_{\nu})$ holds true as well,
 see Remark \ref{rem:remonckbounds}.
 Observe that any periodic $\alpha_{\infty}$-Reeb orbit,
 which could be obtained as in Section \ref{subsec:afinenergcyl},
 is a closed characteristic of $\rmd\alpha_{\infty}=\omega'$
 and, hence, projects to a closed characteristic of $\omega$ via $\pi$.
 Therefore, in the bubbling off analysis in Section \ref{subsec:bubboffanal}
 only Case 2(b) can occur.
 By Section \ref{subsec:remofboundsing}
 any resulting finite energy half-plane
 extends to a $J$-holomorphic disc
 and has Hofer energy less than or equal to $E$,
 which in the case of a disc equals the contact area.
 In fact, the contact areas of bubbled finite energy discs
 are uniformly bounded from below
 as a bubbling off argument as in
 \cite[Lemma 35]{hof93}
 and modified as in Sections
 \ref{subsec:bubboffanal},
 \ref{subsec:afinenergcyl}, and
 \ref{subsec:remofboundsing}
 shows.
 
 Therefore,
 we can prove the proposition
 with the following arguments:
 As remarked at the beginning of
 Section \ref{subsec:bubboffanal}
 we have to rule out that there exists a sequence
 $\zeta_{\nu}$ in $\D$ such that the minimum of
 $a_{\nu}$ is attained for all $\nu\in\N$
 and $a_{\nu}(\zeta_{\nu})\ra-\infty$ as $\nu\ra\infty$
 for a subsequence.
 By the maximum principle
 we can assume that $\zeta_{\nu}$
 is contained in the interior $B_1(0)$
 of the unit disc $\D$.
 Because we are free to precompose $u_{\nu}$
 by a M\"obius transformation
 we can assume that $\zeta_{\nu}=0$ for all $\nu$.
 By the preliminary remarks of the proof
 we have that all accumulation points $z_0$ of
 sequences $z_{\nu}$ of bubbling points of $u_{\nu}$
 are contained in the boundary $\partial\D$.
 In fact, there are only finitely many of them.
 Indeed, this is because the contact area
 $\int_{\D}f_{\nu}^*\rmd\alpha$ of $u_{\nu}$
 is uniformly bounded by the Hofer energy of $u_{\nu}$
 and, hence, by $E$.
 On the other hand the contact area
 of all possible bubbling discs
 is uniformly bounded from below.
 As the contact area is additive the arguments
 used to prove convergence modulo bubbling
 in \cite[Section 2.5]{fz15}
 carry over to the present context
 as in \cite[p.~542/43]{hof93}.
 In other words, there are boundary points
 $z_0^1,\ldots,z_0^N$ of $\D$
 such that for all $\varepsilon>0$
 the restriction of $|\nabla u_{\nu}|_{g_0'}$ to
 \[\D\setminus\bigcup_{j=1}^NB_{\varepsilon}(z_0^j)\]
 is uniformly bounded.
 Using a mean value argument to obtain $C^0$-bounds
 and \cite[Theorem B.4.2]{mdsa04} we find a subsequence
 of $u_{\nu}$ that converges in $C^{\infty}_{\loc}$
 on the deleted disc $\D\setminus\{z_0^1,\ldots,z_0^N\}$.
 This contradicts $a_{\nu}(0)\ra-\infty$.
 This proves the proposition under the assumption
 that $\alpha$ is bounded with respect to all $C^k$-norms.
 
 In order to obtain the proposition
 under the weaker assumption to have only
 $C^3$-bounds on $\alpha$ we observe that
 we have to select converging subsequences
 of $(\alpha_{\nu},J_{\nu})$ precisely in Case 1
 and Case 2(a) in Section \ref{subsec:bubboffanal}.
 This involves the Arzel\`a--Ascoli argument
 and drops regularity by $1$,
 see Corollary \ref{cor:concludwithckonly}
 and Remark \ref{rem:remonckbounds}.
 In order to obtain global gradient bounds for
 finite energy cylinders in
 Section \ref{subsec:afinenergcyl}
 we have to select converging subsequences
 of $(\alpha_0^{\nu},J_0^{\nu})$ via an
 Arzel\`a--Ascoli argument as discussed
 dropping regularity by $1$ once more.
 The elliptic convergence holds for
 almost complex structures $J_{\nu}$
 that converge in $C^1_{\loc}$
 resulting in $C^1_{\loc}$-converging subsequences
 of $u_{\nu}$,
 see \cite[Remark B.4.3]{mdsa04}.
 Because finite energy planes of class $C^1$
 converge to periodic $C^1$-orbits by the arguments in
 \cite[Theorem 31]{hof93}
 we see that we can work with an {\it a priori}
 $C^3$-bound.
\end{proof}

%%%%%%%%%%%%%%%%%%%%%%%%%%%%%%%%%%%
%%%%%%%%%%%%%%%%%%%%%%%%%%%%%%%%%%%

\section{Contractible closed characteristics\label{sec:contrclosedchcts}}

In this section we will give the main applications
of the compactness results of Section \ref{sec:comp}
in view of our periodicity questions of magnetic flows.
The proofs of Theorem \ref{thmintr:truncclasshammotion}
and \ref{thmintr:hightruncclasshammotion}
are given.

%%%%%%%%%%%%%%%%%%%%%%%%%%%%%%%%%%%

\subsection{Germs of holomorphic discs\label{subsec:germofhlomdiscs}}

We consider a non-trivial
virtually contact structure
$\big(\pi\co M'\ra M, \alpha, \omega, g\big)$
over a closed connected manifold $M$
together with the associated almost complex structure $J$
on $\R\times M'$ that is given by the construction in
Section \ref{subsec:theindcomstr}
and Section \ref{subsec:almostcpxstr}.
The induced contact structure on $M'$ defined by $\alpha$
is denoted by $\xi$.

\begin{thm}
 \label{thm:3dovertwist}
 If the $C^3$-norm of $\alpha$ is finite
 and if $(M',\xi)$ is a $3$-dimensional overtwisted
 contact manifold, then $(M,\omega)$
 admits a contractible closed characteristic.
\end{thm}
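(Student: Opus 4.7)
The plan is to adapt Hofer's Bishop family argument \cite{hof93} for the Weinstein conjecture on overtwisted $3$-manifolds to the present non-compact setting, with Proposition \ref{prop:aperioidgromconsum} replacing standard Gromov compactness. First I would fix an overtwisted disc $\Delta\subset M'$; after a $C^\infty$-small perturbation its characteristic foliation has a unique elliptic singularity $e$ in the interior, $\partial\Delta$ is a closed leaf, and the punctured disc $L:=\Delta\setminus\{e\}$ becomes maximally $J$-totally real in $\R\times M'$ under the embedding $p\mapsto(0,p)$. The local Bishop disc analysis near $e$ then produces a maximal smooth $1$-parameter family
\[
u_\tau=(a_\tau,f_\tau)\co(\D,\partial\D)\lra(\R\times M',\{0\}\times L),
\qquad\tau\in[0,\tau^*),
\]
of embedded $J$-holomorphic discs emerging from the constant disc at $e$ and whose boundaries sweep out an annular neighbourhood of $\partial\Delta$ in $L$. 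Since $a_\tau\leq 0$ by the maximum principle of Section \ref{subsec:almostcpxstr}, Stokes' theorem bounds the Hofer energies uniformly by the area $\int_\Delta\rmd\alpha$.

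I would then argue by contradiction: suppose $(M,\omega)$ carries no contractible closed characteristic. Pick a sequence $\tau_\nu\nearrow\tau^*$; after removing a small neighbourhood of $e$, all boundary curves of $u_{\tau_\nu}$ lie in a relatively compact open set $K_L\subset L$. Since $\|\alpha\|_{C^3}<\infty$, Proposition \ref{prop:aperioidgromconsum} yields a Gromov convergent subsequence whose limit is a stable $J$-holomorphic disc on $\overline{K}_L$ with a bubble tree consisting of discs only. The classical Bishop continuation argument then either extends the family smoothly past $\tau^*$, contradicting maximality, or closes it up into a family tracing out a $J$-holomorphic filling of $\{0\}\times\Delta$ in $\R\times M'$; the latter contradicts the overtwisted nature of $\partial\Delta$ via Hofer's linking-number obstruction for Bishop families at Legendrian boundaries \cite{hof93}. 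Either alternative gives the desired contradiction.

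The main obstacle will be controlling bubbling as $\tau\to\tau^*$: in the non-compact target $\R\times M'$, a priori bubbles could drift off to infinity in the $M'$-direction or escape to $-\infty$ in the $\R$-direction, and their rescaled limits could be finite energy planes asymptotic to closed Reeb orbits whose projections to $M$ need not be contractible. This is precisely what Proposition \ref{prop:aperioidgromconsum} rules out, by combining the distance estimate of Proposition \ref{prop:distestihofenerg}, the Arzel\`a--Ascoli argument of Proposition \ref{prop:arzelaascoli} (which crucially uses the $C^3$-bound on $\alpha$), and the finite-energy-cylinder analysis of Section \ref{subsec:afinenergcyl}, which would otherwise produce a contractible periodic Reeb orbit on $M'$ descending via $\pi$ to a contractible closed characteristic of $(M,\omega)$, contrary to the standing assumption. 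A uniform lower bound on the contact area of non-constant bubble discs coming from the monotonicity Proposition \ref{prop:monotonocitylemma} then confines bubbling to finitely many points of $\partial\D$, and the Bishop argument concludes as in the compact case.
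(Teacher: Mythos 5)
Your proposal captures the high-level architecture of the paper's proof — a Bishop disc family emanating from the elliptic singularity $e$ of an overtwisted disc $D$, a contradiction between the Hofer energy bound and the aperiodicity assumption, using Proposition \ref{prop:aperioidgromconsum} in place of standard Gromov compactness. However, it glosses over two points which are exactly the places where the non-compact setting introduces new difficulties.

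First, the almost complex structure $J$ of Section \ref{subsec:theindcomstr} is canonically constructed from $\alpha$ and $g'$ via the endomorphism $\Phi$, and there is no reason for this $J$ to be in Hofer's normal form near $e$; in general there need not be a $J$-holomorphic Bishop family. The paper modifies $J$ to an almost complex structure $J_U$ that agrees with $J$ away from a relatively compact ball neighbourhood $U$ of $e$ and admits the local Bishop family there. This is not a cosmetic change: your family of holomorphic discs is then $J_U$-holomorphic, not $J$-holomorphic.

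Second, and more importantly, once you work with $J_U$, Proposition \ref{prop:aperioidgromconsum} as stated does not apply directly — it is formulated for $J$-holomorphic discs with boundary in a relatively compact subset $K_L$ of a $J$-totally real $L$, and its proof invokes Proposition \ref{prop:distestihofenerg}, the Arzel\`a--Ascoli argument, and the rescaling analysis, all with respect to the invariant $J$. For the Bishop discs, the interiors pass arbitrarily close to $e$ where $J_U\neq J$, and the boundaries start out near $e$ as well. The paper handles this by \emph{(a)} running the monotonicity/distance estimate of Proposition \ref{prop:distestihofenerg} against the set $D\cup U$ rather than against $L$, and \emph{(b)} splitting the bubbling-off analysis of Section \ref{subsec:bubboffanal} and Section \ref{subsec:afinenergcyl} into two cases according to whether the rescaling base points $f_{\nu}(z_{\nu})$ stay at bounded $g'$-distance from $U$ — in which case Hofer's original compact argument applies verbatim — or drift off, in which case one shrinks the rescaling radius so that the maps stay inside $M'\setminus U$ (where $J_U=J$) and the arguments from Section \ref{sec:comp} take over. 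Without these modifications, the claim that ``Proposition \ref{prop:aperioidgromconsum} yields a Gromov convergent subsequence'' is not justified. The rest of your sketch — moduli space/filling structure (the paper prefers a moduli space with three marked points over a $1$-parameter family, but this is equivalent), the E.~Hopf maximum principle keeping discs transverse to $D_\xi$ and away from $\partial D$, and the final incompatibility with compactness — is correct and matches the paper.
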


\begin{proof}
 We can assume that the covering $\pi$ is regular
 as we always can pass to the universal covering.
 Denote by $D$ an overtwisted disc in $(M',\xi)$
 such that the characteristic foliation $D_{\xi}$
 has a unique singularity and a unique closed leaf
 given by the boundary $\partial D$.
 Denote by $e$ the singularity of $D_{\xi}$.
 Let $U$ be a open ball neighbourhood
 of $e$ whose closure in $M'$ is compact.
 Let $J_U$ be an almost complex structure on $\R\times M'$
 that is translation invariant, sends $\partial_t$ to $R$,
 and restricts to a complex structure
 on $(\xi,\omega')$ such that $J_U$ equals $J$
 in a neighbourhood of $\R\times (M'\setminus U)$
 and allows a local $J_U$-holomorphic
 Bishop disc family emerging from $e$
 in the sense of \cite[Section 4.2]{hof93}
 or \cite[Section 3.1]{hof99}.
 One considers the moduli space
 of all $J_{U}$-holomorphic discs
 with three marked points geometrically fixed
 by three mutually distinct leaves of $D_{\xi}$
 not being $\partial D$ that are homologous
 relative $D\setminus\{e\}$ to one of the Bishop discs,
 see \cite{hof93,hof99}.
 According to automatic transversality,
 positivity of intersections, and
 the relative adjunction inequality
 as worked out in \cite{gz10}
 the evaluation map from the moduli space to one
 of the distinguished leaves
 is a local diffeomorphism
 under which the Bishop discs
 have unique preimages.
 By E.\ Hopf's boundary
 version of the maximum principle
 all holomorphic discs with boundary
 on the totally real punctured disc $D\setminus\{e\}$
 are transverse to $D_{\xi}$,
 see \cite{hof93,hof99}.
 In particular, no holomorphic disc
 can touch the boundary $\partial D$.
 Therefore, the moduli space
 cannot be compact as this would imply
 surjectivity of the evaluation map.
 
 Arguing indirectely
 we assume that $(M,\omega)$
 has no contractible closed characteristic
 so that $(M',\alpha)$ does not have a
 contractible periodic Reeb orbit.
 We claim that under this assumption
 the above moduli space will be compact
 leading to the desired contradiction.
 A uniform Hofer energy bound is given by
 the $\frac12|\rmd\alpha|_{g'}$-area of $D$,
 see \cite[Lemma 33]{hof93}.
 Compactness is a consequence of
 Proposition \ref{prop:aperioidgromconsum}
 with the following modifications:
 In Proposition \ref{prop:distestihofenerg}
 one compares the distance to the set
 $D\cup U$ instead to $L$ so that
 Corollary \ref{cor:lowerboundgromconv}
 still holds.
 In the bubbling off analysis
 leading to finite energy planes
 as in Case 1 and Case 2(a)
 of Section \ref{subsec:bubboffanal}
 and of finite energy cylinders
 as in Section \ref{subsec:afinenergcyl},
 respectively,
 we distinguish the cases whether the $g'$-distance
 of the $f_{\nu}(z_{\nu})$, resp., $h(z_{\nu})$ 
 to $U$ stay bounded.
 If so one argues as in \cite{hof93,hof99}.
 If the distance is unbounded
 one replaces $R_{\nu}\varepsilon$,
 resp., $R_{\nu}\varepsilon_{\nu}$
 by the maximal radius $R_{\nu}'$
 less or equal to $R_{\nu}\varepsilon$,
 resp., $R_{\nu}\varepsilon_{\nu}$
 such that $h_{\nu}$,
 resp., $f_{\nu}$ maps $B_{R_{\nu}'}(0)$
 into $M'\setminus U$.
 With this modification the proof of
 Proposition \ref{prop:aperioidgromconsum}
 goes through (cf.\ \cite[p.~547]{gz13})
 as one can argue with $J$ instead of $J_U$.
\end{proof}

\begin{thm}
 \label{thm:3dpi2nonzero}
 If the $C^3$-norm of $\alpha$ is finite
 and if $M$ is a $3$-dimensional manifold
 with non-trivial $\pi_2M$, then $(M,\omega)$
 admits a contractible closed characteristic.
\end{thm}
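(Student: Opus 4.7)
The plan parallels the proof of Theorem \ref{thm:3dovertwist}, replacing the overtwisted disc by an embedded essential $2$-sphere supplied by the hypothesis $\pi_2 M\neq 0$. We may pass to the universal cover and thus assume that $\pi\colon M'\to M$ is regular and $\pi_2 M'=\pi_2 M\neq 0$; the pulled back data remain virtually contact with unchanged $C^3$-bound on $\alpha$. By the sphere theorem applied to the $3$-manifold $M'$ there exists an embedded essential $2$-sphere $S\subset M'$. After a $C^\infty$-small isotopy we arrange that the characteristic foliation $S_\xi$ induced by $\xi=\ker\alpha$ is Morse--Smale with only elliptic and hyperbolic singularities. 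Using Giroux's elimination lemma we cancel all cancellable elliptic--hyperbolic pairs and destroy every closed leaf; the Poincar\'e--Hopf count on $S^2$ ensures that after this reduction there remains at least one elliptic singularity $e$, which we fix together with a small open neighbourhood $U\subset M'$ of $e$ with compact closure.

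Next, choose an almost complex structure $J_U$ on $\R\times M'$ that is translation invariant, sends $\partial_t$ to $R$, restricts to a complex structure on $(\xi,\omega')$ compatible with $\omega'$, agrees with $J$ on $\R\times(M'\setminus U)$, and admits a local $J_U$-holomorphic Bishop family emerging from $e$ in the sense of \cite[Section 4.2]{hof93} or \cite[Section 3.1]{hof99}. Form the moduli space $\mathcal M$ of all $J_U$-holomorphic discs
\[
 u\colon(\D,\partial\D)\lra\bigl(\R\times M',\{0\}\times(S\setminus\{e\})\bigr)
\]
homologous to the Bishop discs relative $S\setminus\{e\}$, equipped with three geometrically fixed marked points mapped to three pairwise distinct non-singular leaves of $S_\xi$. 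Automatic transversality, positivity of intersections, and the relative adjunction inequality as worked out in \cite{gz10}, combined with E.\ Hopf's boundary version of the maximum principle, imply that the evaluation map from $\mathcal M$ to one of the distinguished leaves is a local diffeomorphism under which the local Bishop disc has a unique preimage, and that every element of $\mathcal M$ is transverse to $S_\xi$. In particular, no disc in $\mathcal M$ can touch any other singularity of $S_\xi$.

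Arguing by contradiction, assume that $(M,\omega)$ carries no contractible closed characteristic. A uniform Hofer energy bound for $\mathcal M$ is provided by the $\tfrac12|\rmd\alpha|_{g'}$-area of $S$, see \cite[Lemma 33]{hof93}. Proposition \ref{prop:aperioidgromconsum}, with the same modifications as in the proof of Theorem \ref{thm:3dovertwist} (comparing distances to $S\cup U$ instead of $L$ in Proposition \ref{prop:distestihofenerg} and Corollary \ref{cor:lowerboundgromconv}, and replacing the rescaling radii $R_\nu\varepsilon$ by maximal radii $R_\nu'$ ensuring that the rescaled sequence remains in $M'\setminus U$ whenever bubbling points escape $U$), then forces $\mathcal M$ to be compact. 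Compactness turns the evaluation map into a proper surjection onto the distinguished leaf; but the endpoints of that leaf are at singularities of $S_\xi$ different from $e$, which no element of $\mathcal M$ may approach by the maximum principle. This contradicts surjectivity and thereby produces the required contractible closed characteristic. The principal difficulty I anticipate is the preparatory step of reducing $S_\xi$ to a configuration that supports a Bishop family whose associated moduli problem is globally topologically controlled, as well as ensuring that the truncation of rescaling radii in the bubbling off analysis of Section \ref{subsec:bubboffanal} does not introduce spurious limits; both points are handled exactly as in Theorem \ref{thm:3dovertwist}, but verifying compatibility with the presence of additional singularities of $S_\xi$ requires care.
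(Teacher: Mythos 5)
Your overall strategy — apply the sphere theorem in $M'$, build a Bishop disc moduli space with boundary on the punctured sphere, and invoke Proposition~\ref{prop:aperioidgromconsum} with the same modifications as in Theorem~\ref{thm:3dovertwist} — matches the paper's plan, but there are two substantive gaps.

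First, you never reduce to the tight case, and the reduction of the characteristic foliation you propose does not exist in general. Giroux elimination cancels elliptic--hyperbolic pairs of singularities of the same sign; it does \emph{not} destroy closed leaves. In an overtwisted contact manifold a closed leaf on an embedded sphere is precisely the kind of feature that cannot be removed by a $C^\infty$-small isotopy — if it could, the contact structure would be tight near the sphere. The paper handles this by first invoking Theorem~\ref{thm:3dovertwist}: if $\xi$ is overtwisted we are already done, so one may assume $\xi$ tight, and then Eliashberg's classification of characteristic foliations on spheres in tight contact $3$-manifolds (the reference is \cite[Section~4.6]{gei08}, not the elimination lemma) gives, after isotopy, exactly two elliptic singularities $e^{\pm}$ and no limit cycle. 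Without this reduction the configuration you want (``destroy every closed leaf, keep one elliptic point'') is not available.

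Second, the way you derive a contradiction from compactness is not quite right in this geometry, and differs from what the paper does. In the overtwisted-disc setting the leaf of $D_\xi$ flows into the closed leaf $\partial D$, so surjectivity of the evaluation map would force a disc boundary to touch $\partial D$, which the boundary maximum principle forbids. On the sphere, however, the leaves run from $e^+$ to $e^-$, and the Bishop family emanating from $e^+$ is \emph{expected} to approach the constant disc at $e^-$ in the Gromov topology; that degeneration is not forbidden by the maximum principle. Consequently ``discs cannot approach the other singularity'' does not produce the contradiction by itself. The paper instead runs two Bishop families, from $e^+$ and from $e^-$, and argues geometrically: if the moduli space were compact, the filled family of discs would exhibit a $3$-ball in $M'$ bounded by $S$ (as in Hofer's original argument and \cite{gz10}), contradicting that $[S]\neq 0$ in $\pi_2 M'$. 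The topological contradiction — $S$ bounding a ball versus $S$ essential — is the content that replaces the ``touch a closed leaf'' contradiction of the overtwisted case, and is the step your argument is missing.
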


\begin{proof}
 Observe that $\pi_2M'$ is non-trivial as well.
 By the sphere theorem there exists
 a non-contractible embedding of a two sphere
 into $M'$ whose image we denote by $S$.
 By Theorem \ref{thm:3dovertwist}
 it suffices to consider the case
 of a tight contact structure $\xi$
 so that we can assume
 that the characteristic foliation
 of $S$ has precisely two singular points
 $e^+$ and $e^-$, which are elliptic,
 but has no limit cycle,
 see \cite[Section 4.6]{gei08}.
 One uses a filling by holomorphic discs
 argument as in the proof of
 Theorem \ref{thm:3dovertwist}
 based on two Bishop disc families
 emerging from $e^+$ and $e^-$,
 respectively.
 A contradiction to the non-existence of
 contractible closed characteristics
 can be obtained as
 compactness of the moduli space
 corresponding to the Bishop families
 would result in a $3$-ball inside $M'$
 that is bounded by $S$,
 see \cite{hof93} and cf.\ \cite{gz10},
 which is not possible.
\end{proof}

An embedded $(2n-2)$-sphere $S$ in $(M',\xi)$
is called {\bf standard} provided that the
restriction of the contact form $\alpha$ to $TS$
equals the restriction of
$\frac12\big(\mathbf{x}\rmd\mathbf{y}-\mathbf{y}\rmd\mathbf{x}\big)$
to $TS$,
where we identify $S$
with the unit sphere $S^{2n-2}$
in $\R\times\R^{n-1}\times\R^{n-1}$
equipped with coordinates $(w,\mathbf{x},\mathbf{y})$,
see \cite[p.~326/27]{gz16a}.

\begin{thm}
 \label{thm:2n-1dpi2n-2nonzero}
 Let $n\geq3$.
 If the $C^3$-norm of $\alpha$ is finite
 and if $(M',\xi)$ is a $(2n-1)$-dimensional
 contact manifold that contains a standard sphere $S$
 whose class $\{S\}$ in $\pi_{2n-2}M'$ is non-trivial,
 then $(M,\omega)$ admits a contractible
 closed characteristic.
\end{thm}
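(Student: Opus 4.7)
The plan is to mimic the strategy of Theorems \ref{thm:3dovertwist} and \ref{thm:3dpi2nonzero}, adapted to the higher-dimensional setting via the Bishop disc techniques from \cite{gz16a}. First I would reduce to the case of a regular covering by passing to the universal cover, and then argue by contradiction, assuming that $(M,\omega)$ admits no contractible closed characteristic, so that $(M',\alpha)$ has no contractible periodic Reeb orbit.

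Since $S$ is a standard sphere, its characteristic foliation $S_\xi$ has precisely two elliptic singularities $e^\pm$, and all other leaves are Legendrian arcs from $e^-$ to $e^+$. I would choose open ball neighbourhoods $U^\pm$ of $e^\pm$ with compact closure in $M'$, and modify the almost complex structure $J$ on $\R\times (U^+\cup U^-)$ to $J_U$, translation-invariant, sending $\partial_t$ to $R$, restricting to a complex structure on $(\xi,\omega')$, and agreeing with $J$ near $\R\times(M'\setminus(U^+\cup U^-))$, such that two local Bishop disc families emerge from $e^\pm$ in the sense of \cite[Section 4.2]{hof93} and \cite{gz16a}. I would then set up the moduli space $\mathcal{M}$ of $J_U$-holomorphic discs with boundary on $S\setminus\{e^+,e^-\}$, geometrically fixed by enough interior leaves of $S_\xi$ as in \cite{gz16a}. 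By automatic transversality, positivity of intersections, and the relative adjunction-type inequality worked out in \cite{gz16a}, the evaluation map from $\mathcal{M}$ into a chosen reference leaf is a local diffeomorphism under which the Bishop discs have unique preimages; E.~Hopf's boundary maximum principle forces the boundaries to stay transverse to $S_\xi$, so no disc can touch $\partial$-limits of leaves beyond what the Bishop families already supply.

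The crucial step is compactness of $\mathcal{M}$. A uniform Hofer energy bound is given by the $\tfrac12|\rmd\alpha|_{g'}$-area of $S$, compare \cite[Lemma 33]{hof93}. Under the contradictory assumption that $(M,\omega)$ carries no contractible closed characteristic, Proposition \ref{prop:aperioidgromconsum} applies, with the same modifications used in the proof of Theorem \ref{thm:3dovertwist}: in Proposition \ref{prop:distestihofenerg} one compares the $g'$-distance to $S\cup U^+\cup U^-$ instead of to $L$, and in the bubbling-off analysis of Section \ref{subsec:bubboffanal} one replaces the rescaling radii $R_\nu\varepsilon$, respectively $R_\nu\varepsilon_\nu$, by the maximal radii for which the discs are mapped into $M'\setminus(U^+\cup U^-)$, so that the argument can proceed with $J$ rather than $J_U$. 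Compactness together with the two Bishop families would then exhibit a $J_U$-holomorphic filling of $S$ by a $(2n-1)$-ball in $M'$, contradicting $\{S\}\neq 0$ in $\pi_{2n-2}M'$.

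The main obstacle is ensuring that the filling-by-holomorphic-discs construction really produces a ball bounded by $S$ in the higher-dimensional context, which rests on the automatic transversality and positivity-of-intersection inputs of \cite{gz16a}; this is precisely what makes the Bishop family globalize correctly. Once that higher-dimensional framework is in place, the new content of the present argument is the compactness analysis, which is supplied by Sections \ref{sec:atamegeom}--\ref{sec:comp} and only requires the $C^3$-bound on $\alpha$.
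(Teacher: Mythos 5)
Your proposal correctly identifies the ingredients it shares with Theorems \ref{thm:3dovertwist} and \ref{thm:3dpi2nonzero} — the reduction to the universal cover, the uniform Hofer energy bound from the area of $S$, and the compactness package (Proposition \ref{prop:aperioidgromconsum}, the distance estimate modification, the modified bubbling radii). But the final step, which is where all the new higher-dimensional content lives, does not work as you describe it.

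Your argument culminates in: ``Compactness together with the two Bishop families would then exhibit a $J_U$-holomorphic filling of $S$ by a $(2n-1)$-ball in $M'$.'' This is the $3$-dimensional picture, and it does not globalize to $n\geq3$. In dimension $3$ (Theorem \ref{thm:3dpi2nonzero}) the moduli space of Bishop discs is $1$-dimensional, the evaluation map sweeps out a $3$-ball, and the contradiction is immediate. In higher dimensions the moduli space is $(2n-3)$-dimensional and the evaluation map is a map $f\colon(\WW\times\D,\partial(\WW\times\D))\to(M',S)$ whose image need not be a ball at all; the conclusion has to be extracted homologically and homotopically. The paper does not work with discs with boundary on $S\setminus\{e^+,e^-\}$ inside the symplectisation with a Bishop family emerging from elliptic singularities. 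Instead it performs a reversed contact surgery along $S$ (using \cite[Proposition 6.4]{dige12} to realize $S$ as the belt sphere of a $1$-handle), builds a partial Liouville filling $W$ by gluing the model region between the thin and thick handles of \cite[Chapter 2]{gz16a} to $(-\infty,0]\times M'$, and uses the moduli space $\WW$ of \cite[Section 3.2]{gz16a}, whose boundary condition is a family of Legendrian spheres in the handle region — a Legendrian open book — rather than the punctured belt sphere itself. You need this filling $W$ both to set up the correct boundary condition and to make the automatic transversality and intersection arguments of \cite{gz16a} apply.

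The contradiction is then purely topological and is not supplied by the monotonicity/compactness arguments of this paper: one deforms the evaluation map to a map transverse to $S$ with boundary degree $1$ and with $\WW\times\{1\}$ landing in a cell, deduces that $[S]=0$ in homology so $S$ separates $M'$, obtains $\deg f_1-\deg f_2=\pm1$ on the two components by counting preimages near $S$ (\cite[Lemma 8]{gz16a}), infers that the compact component $M_1$ is simply connected and has the homology of a ball (\cite[Propositions 11 and 12]{gz16a}), and then invokes the $h$-cobordism theorem (\cite[Proposition A, p.~108]{miln65}), which requires $n\geq3$, to conclude $M_1$ is a ball. Only then does $\{S\}=0$ in $\pi_{2n-2}M'$ follow. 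None of this degree-theoretic or $h$-cobordism input appears in your sketch, and it cannot be replaced by appealing to ``automatic transversality and positivity of intersections'' alone. You flagged that the ball-filling step is ``the main obstacle'' but the resolution is not a sharpening of the Bishop picture — it is an entirely different line of reasoning, and your proof has a genuine gap exactly at that point.
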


\begin{proof}
 We equip $\R\times S^{2n-2}$
 with the standard contact structure
 defined by the identification
 with the upper boundary of an index $1$ handle,
 see \cite[p.~329]{gz16a}.
 According to \cite[Proposition 6.4]{dige12}
 we find a contact embedding of
 $(-2,2)\times S^{2n-2}$ into $(M',\xi)$
 mapping $\{0\}\times S^{2n-2}$ onto $S$.
 We identify $(-2,2)\times S^{2n-2}$
 with its image $U$ in $M'$.
 Let $\alpha_1$ be the $\xi$-defining
 contact form on $M'$ that coincides with
 $\alpha$ in a neighbourhood of $M'\setminus U$
 and with the $\xi$-defining contact form
 on the upper boundary of the $1$-handle
 in a neighbourhood of $[-1,1]\times S^{2n-2}$
 by convex interpolation.
 Performing a reversed contact surgery
 along the belt sphere $S$
 we see that $(M',\xi)$ is the result
 of an index $1$ surgery
 on a contact manifold $(N,\eta)$.
 The complement of the surgery region in $N$
 equals $M'\setminus U$ and
 $\eta$ admits a defining contact form
 that is given by $\alpha$ on $M'\setminus U$.
 
 This places us in the situation of
 \cite[Chapter 2]{gz16a}.
 We assume that the index $1$ surgery
 described in \cite[Section 2.1]{gz16a}
 is realized in such a way that
 the contact form that corresponds to the thin handle
 equals $\alpha_1$.
 This eventually requires
 a global contactomorphism with support in $U$
 along which $\alpha$ is assumed to be pulled back.
 The contact form $\alpha_R$ corresponding to the thick
 handle can be assumed to coincide with $\alpha$
 in a neighbourhood of $M'\setminus U$.
 The scaling factor $R$ is chosen according to
 \cite[Lemma 6]{gz16a}.
 All this eventually results in a multiplication
 of $\alpha$ by a large constant
 which we are free to ignore in the following.
 
 We consider the manifold $W$
 obtained by gluing the region between the
 thin and the thick handle inside the surgery model
 (see \cite[Section 2.1]{gz16a})
 to $(-\infty,0]\times M'$ along the thin handle
 being contained in $\{0\}\times U$.
 Similarly to \cite[Section 2.2]{gz16a}
 we equip $W$ with the symplectic form
 $\rmd\lambda$, where $\lambda$ on the model region
 is given by the dual of the Liouville vector field on
 \cite[p.~330]{gz16a}.
 On $(-\infty,0]\times M'$ we define $\lambda$
 as follows:
 Because $\alpha$ and $\alpha_1$ define
 the same contact structure
 we find a function $h$ on $M'$
 that has {\it a posteriori} compact support in $U$
 and satisfies $\alpha=\rme^h\alpha_1$.
 The Liouville form $\lambda$
 at $(t,p)$ on $(-\infty,0]\times M'$
 by definition is given by $\rme^{t+b(t)h}\alpha_1$
 for a smooth function $b$ on $\R$
 that equals $0$ on $[0,\infty)$,
 $1$ on $(-\infty,t_0]$ for a suitable $t_0<0$,
 and satisfies $b'(t)h>-1$ on $M'$ for all $t$.
 
 We remark that the interior of the complement
 of $(t_0,0)\times U$ has two
 connected components.
 On the unbounded component
 $\lambda$ is given by $\rme^t\alpha$,
 on which the almost complex structure $J$
 is defined.
 We define a compatible almost complex structure
 $J_U$ on $(W,\rmd\lambda)$
 that extends $J$ such that $J_U$
 equals the complex structure on the model region,
 see \cite[(J2) p.~331]{gz16a},
 is chosen to be generic on $(t_0,0)\times U$
 in the sense of \cite[Section 3.5]{gz16a},
 and makes the boundary of $W$,
 which is equal to a copy of $M'$ with
 $\lambda|_{T\partial W}=\alpha_R$, $J_U$-convex.
 In other words
 we are in a situation as described in
 \cite[Chapter 3]{gz16a} besides the fact
 that the entire construction is based
 on a non-compact contact manifold $(M',\xi)$.
 
 We define a moduli space $\WW$
 of holomorphic discs
 as in \cite[Section 3.2]{gz16a}.
 All properties formulated in \cite[Chapter 3]{gz16a}
 hold true with the following
 modification in the compactness argument:
 Assuming aperiodicity of the $\alpha$-Reeb flow
 on the universal cover $M'$,
 to which we without loss of generality can pass,
 we can prove compactness as in 
 \cite[Section 3.4]{gz16a} for all
 $J_U$-holomorphic discs belonging to $\WW$
 that have uniformly bounded projections to $M'$,
 cf.\ \cite[Chapter 6]{gz13}.
 Otherwise one argues as at the end of the proof of
 Theorem \ref{thm:3dovertwist}.
 In other words,
 $\WW$ is a $(2n-3)$-dimensional
 compact manifold with boundary.

 If $\WW$ is not connected, 
 we replace it by its connected
 component that contains the standard holomorphic discs,
 see \cite[Section 3.1]{gz16a}.
 A deformation of the evaluation map
 as described in \cite[Chapter 4 and Section 5.1]{gz16a}
 yields a continuous map
 \[
 f\co
 \big(\WW\times\D,\partial(\WW\times\D)\big)
 \lra
 (M',S)
 \]
 transverse to the belt sphere $S$
 whose restriction to the boundary of $\WW\times\D$
 has degree $1$
 and which maps $\WW\times\{1\}$
 to a $(2n-3)$-dimensional cell contained in $S$.
 A first consequence is that the homology class
 represented by $S$ in $M'$ must vanish
 so that $S$ separates $M'$.
 Denote the closures of the connected
 components of $M'\setminus S$
 by $M_1$ and $M_2$.
 Notice, that at least one of the $M_i$
 has to be non-compact as $M'$ is not compact
 by our assumption on the non-triviality
 of the virtually contact structure.
 We denote by $V_i$ the preimage
 of $M_i$ under $f$ and observe
 that the restriction $f_i$ of $f$ to $V_i$
 has a well-defined mapping degree.
 Counting the number of preimages with signs
 of generic points near $S$
 of different components of $M'\setminus S$
 we obtain $\deg f_1-\deg f_2=\pm1$
 as in \cite[Lemma 8]{gz16a}.
 Therefore,
 after eventually changing the notation
 we get that $M_1$ is compact
 and the degree of $f_1$ equals $\pm1$.
 By \cite[Proposition 11]{gz16a},
 which uses that $\WW\times\{1\}$
 gets mapped to a cell, we infer that
 $M_1$ is simply connected.
 Moreover, with \cite[Proposition 12]{gz16a}
 $M_1$ has the homology of a ball.
 Because $M_1$ is bounded by a $(2n-2)$-sphere
 for $n\geq3$ the $h$-cobordism theorem implies
 that $M_1$ has to be a ball,
 see \cite[Proposition A on p.~108]{miln65}.
 Hence, the belt sphere $S$ is contractible in $M'$.
 This contradiction shows that the $\alpha$-Reeb flow
 on $M'$ cannot be aperiodic.
\end{proof}

\begin{rem}
 \label{rem:connsumimplsexoforb}
 Let $(M,\omega)$ be a closed connected
 odd-symplectic manifold
 that admits a non-trivial
 virtually contact structure
 $\big(\pi\co M'\ra M,\alpha,\omega,g\big)$.
 Assume that $\big(\pi\co M'\ra M,\alpha,\omega,g\big)$
 is obtained by a covering connected sum
 (as defined in \cite[Section 2.2]{wz16}) 
 up to a multiplication of $\alpha$ by a positive function
 that is different from $1$ only on a compact subset of $M'$.
 We assume that $\alpha$ is $C^3$-bounded,
 cf.\ Remark \ref{rem:remonsomecontconnsum}.
 If the induced connected sum
 of the underlying manifold $M$
 is non-trivial meaning that none of the summands
 is a homotopy sphere,
 then there exists a
 contractible closed characteristic on $(M,\omega)$.
 
 Indeed,
 the case $n=2$ follows with Theorem \ref{thm:3dpi2nonzero}
 denoting the dimension of $M$ by $2n-1$,
 cf.\ \cite[Theorem 1]{gz16a}.
 For the case $n\geq3$ we can assume
 the situation of the proof of Theorem \ref{thm:2n-1dpi2n-2nonzero}.
 Under the assumption of aperiodicity
 one shows that any belt sphere of the
 covering connected sum
 bounds a $(2n-1)$-dimensional disc.
 With the arguments of
 \cite[Proposition 1.6 and Proposition 3.10]{Hatcher}
 the existence of a closed characteristic
 on $(M,\omega)$ follows.
\end{rem}

We say that the {\bf standard contact handle}
of index $k$
admits a contact embedding into 
$(M',\xi)$ if $(M',\xi)$ contains the image of
the upper boundary
$D^k\times S^{2n-1-k}$
of a symplectic handle
$D^k\times D^{2n-k}$
under a suitable contact embedding
as described in \cite[Section 6.2]{gei08}
in the context of contact surgery.
We call $\{\mathbf{0}\}\times S^{2n-1-k}$
or its image the {\bf belt sphere} of the handle.
The index $k$ of the handle is
{\bf subcritical} provided that $k\leq n-1$.

\begin{thm}
 \label{thm:2n-1dhnnonzero}
 Let $n\geq3$.
 If the $C^3$-norm of $\alpha$ is finite
 and if $(M',\xi)$ is a $(2n-1)$-dimensional
 contact manifold that admits an embedding
 of the standard contact handle
 of subcritical index $k$
 with belt sphere $S$
 whose class $[S]$
 in the oriented bordism group
 $\Omega^{SO}_{2n-1-k}M'$
 is non-trivial,
 then $(M,\omega)$ admits a contractible
 closed characteristic.
\end{thm}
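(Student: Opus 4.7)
The plan is to argue by contradiction: assume the Reeb flow of $\alpha$ on $M'$ carries no contractible periodic orbit, so that $(M,\omega)$ has no contractible closed characteristic. Passing to the universal cover I may assume the covering $\pi$ is regular. The strategy is to build a compact moduli space of $J_U$-holomorphic discs filling the Lagrangian cocore of the standard contact handle whose boundary evaluation realizes the belt sphere $S$ as null-bordant in $M'$, contradicting the hypothesis $[S]\neq 0$ in $\Omega^{SO}_{2n-1-k}(M')$.

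Following the pattern of the proof of Theorem \ref{thm:2n-1dpi2n-2nonzero}, I use \cite[Proposition 6.4]{dige12} to identify a contact neighbourhood of the upper boundary of the standard symplectic handle with an open region $U\subset(M',\xi)$ containing $S$, and interpolate between $\alpha$ and the model contact form in $U$. This exhibits $(M',\xi)$ as the result of a subcritical contact surgery of index $k$. Following \cite[Chapter 2]{gz16a}, adapted from index $1$ to general subcritical index $k$ along the lines of \cite{gnw16}, I glue a symplectic handle to the negative end of the symplectisation of $(M',\alpha)$ to obtain a symplectic manifold $(W,\rmd\lambda)$ whose Liouville form coincides with $\rme^t\alpha$ on the unbounded end. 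An almost complex structure $J_U$ is chosen compatible with $\rmd\lambda$, standard on the handle model, $J_U$-convex along $\partial W$, and equal to the translation-invariant $J$ outside a compact set. The Bishop family of $J_U$-holomorphic discs with boundary on the cocore then generates a moduli space $\WW$ of unparametrized discs, rigidified by geometric marked-point conditions so as to be regular.

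The key technical step, and the main obstacle, is compactness of $\WW$. Bubbling inside the compact region $U$ is excluded by positivity of intersections, $J_U$-convexity of $\partial W$, and the relative adjunction/transversality arguments of \cite[Chapter 3]{gz16a}. Bubbling on the non-compact end is excluded by Proposition \ref{prop:aperioidgromconsum}: since $\|\alpha\|_{C^3}<\infty$, any escape to infinity of a sequence of discs would, after the deck-transformation rescaling of Sections \ref{subsec:bubboffanal}--\ref{subsec:afinenergcyl}, produce a non-constant finite energy cylinder whose asymptotic loop descends to a contractible closed characteristic on $(M,\omega)$, against our standing assumption. For sequences of discs that remain bounded but may pass outside $U$ one replaces the rescaling radius $R_\nu\varepsilon$ by the maximal radius on which the rescaled disc remains in $M'\setminus U$, exactly as at the end of the proof of Theorem \ref{thm:3dovertwist}. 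This yields $\WW$ as a compact oriented manifold with boundary corresponding to broken configurations attached to $S$, and the boundary evaluation sweeps out $S$ with degree $\pm1$.

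A deformation of the disc evaluation map as in \cite[Chapter 4, Section 5.1]{gz16a} then produces a continuous map of pairs
\[
f\co\big(\WW\times\D,\partial(\WW\times\D)\big)\lra(M',S)
\]
transverse to $S$, whose boundary restriction factors as a degree-$\pm1$ map onto $S$ followed by the inclusion $S\hookrightarrow M'$. Under the identification $\widetilde{\Omega}^{SO}_{2n-1-k}(S^{2n-1-k})\cong\Z$ by degree, this boundary restriction represents, up to signed multiples of classes coming from $\Omega^{SO}_{2n-1-k}(\mathrm{pt})$ that vanish after pushforward by the constant map into $S$, the class $[S]$ in $\Omega^{SO}_{2n-1-k}(M')$. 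Since it is filled by the compact oriented manifold $\WW\times\D$ via $f$, the class $[S]$ vanishes in $\Omega^{SO}_{2n-1-k}(M')$, contradicting the hypothesis. This contradiction proves the theorem.
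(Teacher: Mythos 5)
Your proposal follows essentially the same route as the paper: the paper's proof is a brief pointer to \cite[Theorem 1.4, Sections 3 and 7]{gnw16} and to the argument of Theorem \ref{thm:2n-1dpi2n-2nonzero}, and you spell out exactly those steps --- a surgered moduli space of holomorphic discs with boundary on the cocore, compactness on the non-compact $M'$ secured by the $C^3$-bound through Proposition \ref{prop:aperioidgromconsum}, and a deformation of the evaluation map exhibiting $S$ as null-bordant. One minor caveat: the final bordism-class accounting (that the correction terms from $\Omega^{SO}_{2n-1-k}(\mathrm{pt})$ ``vanish after pushforward'') is not correct as phrased, but since both you and the paper defer the precise identification to \cite[Section 7]{gnw16}, this is cosmetic rather than a gap in the strategy.
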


\begin{proof}
 In view of \cite[Theorem 1.4]{gnw16}
 one can argue as in the proof of
 Theorem \ref{thm:2n-1dpi2n-2nonzero}.
 Under the assumption of aperiodicity
 of the $\alpha$-Reeb flow on the universal cover $M'$
 one shows with \cite[Sections 3 and 7]{gnw16}
 that $S$ is null-bordant
 via a deformation of the evaluation map
 defined on a surgered moduli space of holomorphic discs.
\end{proof}

\begin{rem}
 With the same reasoning
 and \cite[Sections 5]{gnw16}
 it follows that in the situation
 of Theorem \ref{thm:2n-1dhnnonzero}
 $(M,\omega)$ admits a contractible
 closed characteristic provided that
 the class $\{S\}$ of the belt sphere $S$
 is non-trivial in $\pi_3M'$ if $n=3$ and $k=2$,
 resp., in $\pi_4M'$ if $n=4$ and $k=3$.
\end{rem}

\begin{rem}
 The result of Theorem \ref{thm:2n-1dhnnonzero}
 is based on moduli spaces of holomorphic discs
 with boundary on a
 family of Legendrian open books
 in $(M',\xi)$.
 The involved arguments
 for the holomorphic curves
 in symplectisations of $M'$
 of the preceding section
 work equally well for
 Legendrian open books with boundary.
 Similarly to Theorem \ref{thm:3dovertwist}
 and \cite{ah09,gnw16,mnw13,nr11}
 one shows:
 If the $C^3$-norm of $\alpha$ is finite
 and if $(M',\xi)$ admits a Legendrian
 open book with boundary,
 then $(M,\omega)$ admits a contractible
 closed characteristic.
 Such examples can be obtained with
 \cite[Proposition 2.6.1 or Proposition 2.6.2]{wz16}.
\end{rem}

%%%%%%%%%%%%%%%%%%%%%%%%%%%%%%%%%%%

\subsection{Magnetic energy surfaces\label{subsec:magensur}}

Let $(Q,h)$ be a closed $n$-dimensional
Riemannian manifold
and denote by $\tau\co T^*Q\ra Q$
the cotangent bundle of $Q$.
Using the Levi-Civita connection $D^Q$ of $h$
and the dual metric $h^{\flat}$
we split the tangent bundle $TT^*Q=\HH\oplus\VV$
of $T^*Q$ into the horizontal and vertical distribution.
A metric $m$ on $T^*Q$ is defined via
\[
m\big((v,a),(w,b)\big):=
h\big(T\tau(v),T\tau(w)\big)+
h^{\flat}(a,b)
\]
for all $(v,a),(w,b)$ in $\HH\oplus\VV$.
The Levi-Civita connection of $m$ is denoted by $D$.
This turns $\tau$ into a Riemannian submersion
with totally geodesic fibres as is readily verified in
Riemannian normal coordinates.
Indeed,
straight vector space lines
contained in a cotangent fibre $T^*_qQ$
minimize the length
in the class of curves
connecting two given sufficiently close points
on $T^*_qQ$.

On $T^*Q$ we consider the twisted symplectic form
$\omega_{\sigma}=\rmd\lambda+\tau^*\sigma$,
where $\lambda$ denotes the Liouville $1$-form 
of $\tau$ and $\sigma$ is a magnetic $2$-form on $Q$,
and the Hamiltonian function
\[
H=\frac12|\,.\,|^2_{h^{\flat}}+V\circ\tau\,,
\]
where $V\co Q\ra\R$ is a potential function on $Q$.
For energies $e>\max_QV$ we denote the
regular level set $\{H=e\}\subset T^*Q$
by $M$, which is equipped with the odd-symplectic form
$\omega=\omega_{\sigma}|_{TM}$ and the metric
$g=m|_{TM}$.
The Levi-Civita connection on $(M,g)$
is denoted by $\nabla$
and the second fundamental form of
$(M,g)$ in $(T^*Q,m)$ by $\II$.

We assume that there exists a $1$-form
$\theta$ on the universal cover $\widetilde{Q}$
such that the lift $\mu^*\sigma=\rmd\theta$
of the magnetic form
along the universal covering map
$\mu\co\widetilde{Q}\ra Q$
has primitive $\theta$.
To $\widetilde{Q}$ we lift the metric $h$
via pull back $\tilde{h}=\mu^*h$
so that $\mu$ is turned into a local isometry.
The induced Levi-Civita
connection is denoted by
$\widetilde{D}^Q$.
In the same way the Hamiltonian $H$
lifts to $\widetilde{H}$ on $T^*\widetilde{Q}$
defining $M'$ via $\{\widetilde{H}=e\}$.
Along the induced universal covering map
$T^*\mu\co T^*\widetilde{Q}\ra T^*Q$
we lift the geometry of $(T^*Q,m,D)$
to $(T^*\widetilde{Q},\tilde{m},\widetilde{D})$
turning $T^*\mu$ into a local isometry.
This induces a metric $g'=\tilde{m}|_{TM'}$ on $M'$,
which is the lifted metric,
such that $\pi=T^*\mu|_{M'}$ is a local isometry.
The Levi-Civita connection of $g'$ is denoted by
$\nabla'$ and the second fundamental form of
$(M',g')$ in $(T^*\widetilde{Q},\tilde{m})$
is denoted by $\II'$.

The restriction
\[
\alpha=
\big(\tilde{\lambda}+\tilde{\tau}^*\theta\big)|_{TM'}\;,
\]
where $\tilde{\lambda}$ denotes the Liouville $1$-form 
of the cotangent bundle
$\tilde{\tau}\co T^*\widetilde{Q}\ra\widetilde{Q}$,
serves as a primitive of $\omega'=\pi^*\omega$.
By \cite[Proposition 2.4.1]{wz16}
the tuple
\[
\big(\pi\co M'\ra M,\alpha,\omega,g\big)
\]
is a virtually contact structure for all energies
$e>\sup_{\widetilde{Q}}\widetilde{H}(\theta)$
provided that $\|\theta\|_{C^0}$ is finite.

\begin{prop}
 \label{ckthetatockalpha}
 Let $k\in\N$ be a natural number.
 If the $C^k$-norm of $\theta$
 with respect to $(\widetilde{Q},\tilde{h},\widetilde{D}^Q)$
 is finite,
 then the $C^k$-norm of $\alpha$
 with respect to $(M',g',\nabla')$
 is finite too.
\end{prop}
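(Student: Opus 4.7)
The plan is to split the primitive as $\alpha=\tilde{\lambda}|_{TM'}+\tilde{\tau}^*\theta|_{TM'}$ and bound the two summands separately. For the Liouville part, the naturality of $\lambda$ under cotangent lifts gives $\tilde{\lambda}=(T^*\mu)^*\lambda$, and hence $\tilde{\lambda}|_{TM'}=\pi^*(\lambda|_{TM})$. Since $\pi\co(M',g')\to(M,g)$ is a local isometry, the generalised theorema egregium (Remark \ref{rem:behaviorunderisometries}) yields $\|\pi^*(\lambda|_{TM})\|_{C^k}=\|\lambda|_{TM}\|_{C^k}$, which is finite because $M$ is compact. So this summand contributes nothing to the difficulty.

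For the magnetic term $\tilde{\tau}^*\theta|_{TM'}$ the plan is to work in charts adapted to the Riemannian submersion $\tilde{\tau}\co(T^*\widetilde{Q},\tilde{m})\to(\widetilde{Q},\tilde{h})$, which has affine, hence totally geodesic, fibres. In local cotangent coordinates $(q,p)$ in which $\theta=\theta_i(q)\,\rmd q^i$ one has $\tilde{\tau}^*\theta=\theta_i(q)\,\rmd q^i$. By O'Neill's formulas for a submersion with vanishing $T$-tensor, the Christoffel symbols of $\tilde{m}$ and all their derivatives are expressible through the Christoffel symbols and curvatures of $\tilde{h}$ and the fibre coordinates $p$. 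Iterating the formula for $\widetilde{D}(\tilde{\tau}^*\theta)$ from Section \ref{subsec:loccomp} produces a finite sum of terms, each of which is polynomial in $p$ and multilinear in the $\theta_i$ together with their $q$-derivatives and in the Christoffel symbols of $\tilde{h}$ together with their derivatives. Since $Q$ is compact, every quantity built from $h$ descends from $(Q,h)$ and is therefore uniformly bounded together with all its derivatives; the $\theta$-dependent factors are bounded by $\|\theta\|_{C^k}$ via the chart estimate at the end of Section \ref{subsec:loccomp}. Restriction to $M'$ then forces $|p|^2=2(e-\widetilde{V}(q))\le 2(e-\min_{Q}V)$, so the fibre coordinates are uniformly bounded and every polynomial-in-$p$ term is controlled.

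To pass from covariant derivatives of $\tilde{\tau}^*\theta$ in the ambient space $(T^*\widetilde{Q},\widetilde{D})$ to covariant derivatives of its restriction on $(M',\nabla')$, I would apply the Gauss formula $\nabla'_X Y=\widetilde{D}_X Y-\II'(X,Y)$ iteratively. The second fundamental form $\II'$ is the lift along the local isometry $\pi$ of the second fundamental form $\II$ of the compact hypersurface $M\subset T^*Q$, so $\II'$ has finite $C^\ell$-norm for every $\ell$. A Leibniz-type induction then expresses $(\nabla')^k(\tilde{\tau}^*\theta|_{TM'})$ as the restriction of $\widetilde{D}^k(\tilde{\tau}^*\theta)$ corrected by multilinear combinations of covariant derivatives of $\II'$ up to order $k-1$, all of which are uniformly bounded by the previous step.

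The main obstacle is the bookkeeping in the second step: controlling the polynomial dependence on the fibre coordinates $p$ in the iterated covariant derivatives of $\tilde{\tau}^*\theta$. What makes this tractable is that $\tilde{\tau}^*\theta$ is constant along the fibres, so $\theta$ itself contributes no $p$-growth; only the submersion geometry does, and on the level set $M'$ the quantity $|p|$ is bounded in terms of $e$ and $\min_QV$. Combining the three steps gives a bound of the form $\|\alpha\|_{C^k}\le c_k\big(1+\|\theta\|_{C^k}\big)$ for a constant $c_k$ depending only on $(Q,h,V)$ and on $e$, which proves the proposition.
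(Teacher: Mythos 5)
Your proof is correct in substance but takes a genuinely different path from the paper's in two places. First, the paper keeps the full $1$-form $\beta=\tilde{\lambda}+\tilde{\tau}^*\theta$ together through the Gauss-formula iteration, showing that $(\nabla')^k\alpha$ is a universal multilinear combination of $\widetilde{D}^\ell\beta$ for $\ell\le k$ and $\widetilde{D}^\ell\II'$ for $\ell\le k-1$, and only then splits $\widetilde{D}^\ell\beta$ into Liouville and magnetic contributions; you instead split $\alpha$ at the very start and dispose of the Liouville summand intrinsically via $\tilde{\lambda}|_{TM'}=\pi^*(\lambda|_{TM})$ and the invariance $\|\varphi^*\cdot\|_{C^k}=\|\cdot\|_{C^k}$ under local isometries, which avoids the $\II'$-corrections entirely for that piece and is a clean simplification since it uses that $\pi\co M'\to M$ (not merely $T^*\mu$) is a local isometry. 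Second, for the magnetic term $\gamma=\tilde{\tau}^*\theta$ you argue in cotangent coordinates, observing that the Sasaki-type Christoffel symbols of $\tilde{m}$ are polynomial in $p$ with coefficients built from $\tilde{h}$, so that iterated coordinate covariant derivatives of $\gamma$ are polynomial in $p$ and controlled on $M'$ by the energy constraint $|p|^2=2(e-\widetilde{V})$; the paper instead works tensorially in the $\HH\oplus\VV$-splitting, expresses $\widetilde{D}\gamma$ through the O'Neill $A$-tensor and the vanishing of the second fundamental form of the fibres, and bounds everything on a fixed disc subbundle $DT^*\widetilde{Q}\supset M'$ because $A$ and its covariant derivatives descend from the compact $T^*Q$. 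Both routes yield a bound $\|\alpha\|_{C^k}\le c_k(1+\|\theta\|_{C^k})$; the paper's abstract route avoids tracking the polynomial growth in $p$ and sidesteps the (true but unstated in your argument) need to control the smallest eigenvalue of $(\tilde{m}_{ij})$ uniformly over a covering atlas lifted from $Q$, while your coordinate version makes the underlying polynomial structure and the role of the energy-level constraint explicit.
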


\begin{proof}
 We abbreviate
 \[
 \beta=
 \tilde{\lambda}+\tilde{\tau}^*\theta\;.
 \]
 Because of the Gau{\ss} formula
 (cf.\ \cite[Theorem 1.72]{bes87})
 we obtain for vector fields
 $X,Y$ on $M'$ suitably extended
 to $T^*\widetilde{Q}$ that
 \[
 \big(\nabla'_X\alpha\big)(Y)=
 \big(\widetilde{D}_X\beta\big)(Y)|_{M'}+
 \beta\big(\II'(X,Y)\big)|_{M'}
 \;.
 \]
 For the extension
 we used a small tubular neighbourhood
 $\big\{e-\varepsilon<\widetilde{H}<e+\varepsilon\big\}$
 of $M'$ and the vector fields are required
 to be tangent to each level set of $\widetilde{H}$.
 Moreover,
 $\II'$ stands for the $(1,2)$-tensor
 that is defined as in \cite[(9.17)]{bes87}
 and coincides with the second fundamental form
 restricted to each energy surface.
 Similarly,
 \[
 \big(\nabla'_X(\nabla'\alpha)\big)(Y_1,Y_2)
 \]
 is in view of Section \ref{subsec:hordcovderiv}
 the sum of 
 \[
 \Big(
 \widetilde{D}_X
 \big(\widetilde{D}\beta\big)\Big)
 (Y_1,Y_2)|_{M'}
 \]
 and
 \[
 \big(\widetilde{D}\beta\big)
 \Big(\II'(X,Y_1),Y_2\Big)|_{M'}+
 \big(\widetilde{D}\beta\big)
 \Big(Y_1,\II'(X,Y_2)\Big)|_{M'}
 \]
 and
 \[
 \beta\circ\II'
 \Big(\II'(X,Y_1),Y_2\Big)|_{M'}+
 \beta\circ\II'
 \Big(Y_1,\II'(X,Y_2)\Big)|_{M'}
 \]
 as well as
 \[
 \Big(
 \widetilde{D}_X
 \big(\beta\circ\II'\big)
 \Big)
 (Y_1,Y_2)|_{M'}
 =
 \big(\widetilde{D}_X\beta\big)
 \big(\II'(Y_1,Y_2)\big)|_{M'}+
 \beta\Big(
 \big(\widetilde{D}_X\II'\big)(Y_1,Y_2)
 \Big)|_{M'}
 \;.
 \]
 Inductively one finds that
 $\big((\nabla')^k\alpha\big)(X,Y_1,\ldots,Y_k)$
 admits an analogous expression that involves
 $\widetilde{D}$-covariant derivatives of $\beta$
 up to order $k$ and $\widetilde{D}$-covariant
 derivatives of $\II'$ up to order $k-1$,
 cf.\ Appendix \ref{thirdcovderiv} for the third derivative.
 Invoking the local isometry $T^*\mu$,
 the $(1,\ell+2)$-tensor $\widetilde{D}^{\ell}\!\II'$
 along $M'$ can be estimated along the compact
 manifold $M$ by $D^{\ell}\!\II$,
 which is uniformly bounded,
 for all $\ell=0,1,\ldots,k-1$.
 
 Therefore,
 it is enough to estimate
 $\widetilde{D}^{\ell}\beta$
 on a disc subbundle
 $DT^*\widetilde{Q}$ of $T^*\widetilde{Q}$
 that contains $M'$.
 Observe that the local isometry
 $T^*\mu$ from
 $(T^*\widetilde{Q},\tilde{m},\widetilde{D})$
 to $(T^*Q,m,D)$
 pulls the Liouville form $\lambda$ of $\tau$
 back to the Liouville form
 $\tilde{\lambda}=(T^*\mu)^*\lambda$
 of $\tilde{\tau}$.
 Hence,
 using Remark \ref{rem:behaviorunderisometries}
 we get
 $\widetilde{D}^{\ell}\tilde{\lambda}=(T^*\mu)^*D^{\ell}\lambda$
 for all $\ell$.
 This implies that the $C^k$-norm of $\tilde{\lambda}$
 restricted to $DT^*\widetilde{Q}$
 is equal to the $C^k$-norm of $\lambda$
 restricted to $DT^*Q$
 relative to the respective geometries.
 As the latter is finite
 by the compactness of the disc bundle
 it remains to be shown finiteness
 of $\|\tilde{\tau}^*\theta\|_{C^k}$
 on $(DT^*\widetilde{Q},\tilde{m},\widetilde{D})$.
 
 We will estimate
 \[
 \gamma=\tilde{\tau}^*\theta
 \]
 assuming $C^k$-bounds on $\theta$.
 According to the splitting into
 horizontal and vertical spaces
 the first covariant derivative
 $\widetilde{D}\gamma$
 on tangent vectors
 $(v,a),(w,b)$ in $\HH\oplus\VV$
 at $u\in T^*\widetilde{Q}$
 equals to
 \[
 \big(\widetilde{D}\gamma\big)(v,w)+
 \big(\widetilde{D}\gamma\big)(v,b)+
 \big(\widetilde{D}\gamma\big)(a,w)+
 \big(\widetilde{D}\gamma\big)(a,b)
 \,.
 \]
 We will estimate each term of this expression separately.
 
 Denote by $X,Y$ horizontal vector fields
 that are lifts along $T\tilde{\tau}$
 of tangent vector fields $X^{\widetilde{Q}},Y^{\widetilde{Q}}$
 on $\widetilde{Q}$.
 We obtain
 \[
 \big(\widetilde{D}\gamma\big)(X,Y)|_u=
 \big(
 \widetilde{D}^Q\theta
 \big)
 (X^{\widetilde{Q}},Y^{\widetilde{Q}})|_{\tilde{\tau}(u)}
 \]
 because
 $\gamma(Y)=\theta(Y^{\widetilde{Q}})$
 is constant along the cotangent fibres
 and the horizontal part of $\widetilde{D}_XY$
 is projected to
 $\widetilde{D}^Q_{X^{\widetilde{Q}}}Y^{\widetilde{Q}}$
 via $T\tilde{\tau}$,
 see \cite[p.~240]{bes87}.
 Using the assumption of $\theta$ being $C^1$-bounded
 we infer that
 $\widetilde{D}\gamma$
 is finite on unit horizontal vectors $v,w$.
 
 For a horizontal vector field $X$
 and a vertical vector field $V$ we get
 \[
 \big(\widetilde{D}\gamma\big)(X,V)=
 -\theta
 \Big(
 T\tilde{\tau}\big(A_XV\big)
 \Big)
 \]
 because $T\tilde{\tau}(V)$ vanishes identically.
 Here $A$ is the $(1,2)$-tensor field on $T^*\widetilde{Q}$
 defined by decomposing $\widetilde{D}_XV$
 orthogonally into the respective horizontal and vertical parts
 as in \cite[Definition 9.20]{bes87} measuring
 the non-integrability of the horizontal distribution.
 Because $\theta$ is a bounded $1$-form on $\widetilde{Q}$,
 $T\tilde{\tau}$ is an orthogonal projection,
 and $A$ is a geometric tensor of the lifted geometry
 of $(T^*\widetilde{Q},\tilde{m},\widetilde{D})$,
 the tensor field
 $\widetilde{D}\gamma$ restricted to
 the chosen disc bundle $DT^*\widetilde{Q}$
 is bounded on mixed unit tangent vectors
 $v\in\HH_u$ and $b\in\VV_u$.
 
 Similarly,
 for a vertical vector field $U$
 and a lifted horizontal vector field $Y$
 \[
 \big(\widetilde{D}\gamma\big)(U,Y)=
 -\theta
 \Big(
 T\tilde{\tau}\big(A_YU\big)
 \Big)
 \]
 because
 $\gamma(Y_u)=\theta\big(Y^{\widetilde{Q}}_{\tilde{\tau}(u)}\big)$
 is constant along the cotangent fibres,
 so that an application of the vertical vector field $U$
 vanishes,
 and by \cite[Definition 9.21b]{bes87}
 combined with the fact that $[Y,U]$ is vertical,
 see \cite[p.~240]{bes87}.
 Again,
 finiteness of the $C^0$-norm of $\theta$
 implies that
 $\widetilde{D}\gamma$ is bounded
 on mixed unit tangent vectors
 $a\in\VV_u$ and $w\in\HH_u$
 for all $u\in DT^*\widetilde{Q}$.
 
 Finally,
 because $\gamma(V)$ vanishes identically
 and because the fibres of $\tilde{\tau}$
 are totally geodesic
 we obtain with \cite[9.25a and 9.26]{bes87}
 that
 \[
 \big(\widetilde{D}\gamma\big)(U,V)=0
 \]
 for vertical vector fields $U,V$.
 Summarizing,
 we obtain that $\|\gamma\|_{C^1}$
 is bounded by
 $2\|\theta\|_{C^1}\big(\|A\|_{C^0}+1\big)$
 on $DT^*\widetilde{Q}$.
 Moreover, we get that
 \[
 \big(\widetilde{D}\gamma\big)(X\oplus U,Y\oplus V)
 \]
 decomposes as
 \[
 \big(
 \widetilde{D}^Q\theta
 \big)
 \Big(T\tilde{\tau}(X),T\tilde{\tau}(Y)\Big)
 -\theta
 \Big(
 T\tilde{\tau}\big(A_XV\big)
 \Big)
 -\theta
 \Big(
 T\tilde{\tau}\big(A_YU\big)
 \Big)
 \]
 for all lifted horizontal vector fields $X,Y$
 and vertical vector fields $U,V$
 on $T^*\widetilde{Q}$
 with respect to $\HH\oplus\VV$.
 In other words,
 the tensor $\widetilde{D}\gamma$
 is equal to
 \[
 \widetilde{D}\gamma=
 \widetilde{D}^Q\theta\circ T\tilde{\tau}-
 \theta\circ T\tilde{\tau}\circ 2(A)
 \]
 by \cite[9.21]{bes87}
 denoting the symmetrization
 of the tensor $A$ by $(A)$.
 
 The second covariant derivative
 $\widetilde{D}^2\gamma$
 can be computed in a similar fashion.
 For any tangent vector field $X$ on
 $T^*\widetilde{Q}$
 we consider
 $\widetilde{D}_X\widetilde{D}\gamma$
 read as $\widetilde{D}_X$
 applied to the difference expression
 of $\widetilde{D}\gamma$.
 The minuend
 \[
 \widetilde{D}_X
 \Big(
 \widetilde{D}^Q\theta\circ T\tilde{\tau}
 \Big)
 \]
 can be treated as the first covariant derivative
 of $\gamma$ analogously
 to the above considerations.
 The subtrahend can be computed
 according to the product rule for
 the composition $\circ$ of tensors as
 \[
 \widetilde{D}_X
 \Big(
 \theta\circ T\tilde{\tau}\circ 2(A)
 \Big)=
 \widetilde{D}_X\big(\theta\circ T\tilde{\tau}\big)\circ 2(A)+
 \big(\theta\circ T\tilde{\tau}\big)\circ\widetilde{D}_X\big(2(A)\big)
 \,.
 \]
 Because of $\gamma=\tilde{\tau}^*\theta$
 this sum can be discussed
 with the arguments for $\gamma$ as above
 with the additional phenomenon that a
 covariant derivative of $A$,
 which still is a geometric tensor,
 has to be taken into account.
 Inductively,
 one shows finiteness
 of $\|\tilde{\tau}^*\theta\|_{C^k}$
 on $(DT^*\widetilde{Q},\tilde{m},\widetilde{D})$
 as claimed.
\end{proof}

%%%%%%%%%%%%%%%%%%%%%%%%%%%%%%%%%%%

\subsection{Truncating the magnetic field\label{subsec:truncthemagfiel}}

We will continue the considerations
and the use of the notation from
Section \ref{subsec:magensur}.
In the following we will recall the
truncation construction from
\cite[Section 2.5]{wz16}:
Let $U$ be an embedded closed $n$-disc in $Q$.
The image of the origin is denoted by $q\in Q$.
We choose $U$ so small
such that the decomposition of
$\mu^{-1}(U)$ into the connected components
$U^p$, $p\in\mu^{-1}(q)$, results into
a family of diffeomorphisms
$\mu|_{U^p}$ onto $U$,
which are in fact isometries.
Let $\chi$ be a cut off function on $Q$
that is identically $1$ on $Q\setminus U$
and vanishes on a disc neighbourhood $W$ of $q$.
We define a closed $2$-form $\hat{\sigma}$
on $Q$ that equals $\sigma$ on $Q\setminus U$
and $\rmd(\chi\theta_U)$ on $U$,
where $\theta_U$ is a primitive of $\sigma|_U$.
Notice,
that $\hat{\sigma}$ and $\sigma$
are cohomologous.
In fact,
$(\chi-1)\theta_U$ is a primitive
of the difference $\hat{\sigma}-\sigma$.
Therefore,
\[
\theta':=\theta+(\tilde{\chi}-1)\mu^*\theta_U
\]
is a primitive of $\mu^*\hat{\sigma}$,
where $\tilde{\chi}:=\mu^*\chi$.
The restriction $\theta'|_{W^p}$ of $\theta'$
to all connected components $W^p$
of $\mu^{-1}(W)$ is closed, and, hence, exact.
As in the proof of \cite[Proposition 2.5.1]{wz16}
we select primitive functions $f^p$ of
$\theta'|_{W^p}=\rmd f^p$ via the Poincar\'e Lemma.
Define a primitive $\hat{\theta}$ of $\hat{\sigma}$
to be the $1$-form on $\widetilde{Q}$
that coincides with $\theta'$ on
$\widetilde{Q}\setminus\mu^{-1}(W)$
and with $\rmd(\tilde{\chi}_Wf^p)$ on $W^p$
for all $p\in\mu^{-1}(q)$,
where $\tilde{\chi}_W$ is the $\mu$-pull back
of a cut off function $\chi_W$
that is equal to $1$ on $Q\setminus W$
and vanishes on a disc like neighbourhood
of $q$ contained in the interior of $W$.
We call the pair $(\hat{\theta},\hat{\sigma})$
a {\bf truncation} of $(\theta,\sigma)$.

\begin{lem}
 \label{lem:truncationckbounds}
 Let $k\in\N$ be a natural number.
 If the $C^k$-norm of $\theta$
 with respect to $(\widetilde{Q},\tilde{h},\widetilde{D}^Q)$
 is finite,
 then the $C^k$-norm of the truncation $\hat{\theta}$
 with respect to $(\widetilde{Q},\tilde{h},\widetilde{D}^Q)$
 is finite too.
\end{lem}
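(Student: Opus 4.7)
The approach is to verify $C^k$-bounds for each piece of the piecewise definition of $\hat{\theta}$ separately, using that all the cut-off functions and local primitives involved are controlled by objects living on compact subsets of $Q$, transported to $\widetilde Q$ by local isometries.

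First I would show that the intermediate form $\theta'=\theta+(\tilde\chi-1)\mu^*\theta_U$ satisfies $\|\theta'\|_{C^k}<\infty$ on $\widetilde Q$. The term $\theta$ is $C^k$-bounded by hypothesis. For the correction term, note that $(\tilde\chi-1)\mu^*\theta_U$ is supported on $\mu^{-1}(U)$, and on each connected component $U^p$ the restriction of $\mu$ is an isometry onto the compact disc $U$. Hence, via Remark \ref{rem:behaviorunderisometries} and the product rule for covariant derivatives, the $C^k$-norm of $(\tilde\chi-1)\mu^*\theta_U$ on each $U^p$ is bounded by a constant depending only on $\|\chi-1\|_{C^k(Q)}$ and $\|\theta_U\|_{C^k(U)}$, both of which are finite by compactness of $Q$ and $U$. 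This yields a uniform bound independent of $p$.

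Next, the crux is to produce the primitives $f^p$ on $W^p$ with $C^{k+1}$-bounds that are uniform in $p\in\mu^{-1}(q)$. Since $\mu|_{W^p}\colon W^p\to W$ is an isometry onto the compact disc $W$, I would apply the standard Poincar\'e-lemma construction in $W^p$: fix the lift $q_p$ of $q$, choose normal coordinates at $q_p$ (which exist uniformly by the isometry with $W$), and set $f^p(x)=\int_0^1\theta'(\dot\gamma_x(t))\,\rmd t$ along the radial geodesic $\gamma_x$ from $q_p$ to $x$. Differentiating under the integral, together with the uniform geometry of $W^p$, shows that
\[
\|f^p\|_{C^{k+1}(W^p)}\leq C(W)\cdot\|\theta'|_{W^p}\|_{C^k(W^p)},
\]
and by the previous paragraph the right-hand side is bounded by $C(W)\cdot\|\theta'\|_{C^k}$ uniformly in $p$.

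From here the conclusion is a routine application of the product rule: on each $W^p$,
\[
\rmd(\tilde\chi_Wf^p)=\rmd\tilde\chi_W\cdot f^p+\tilde\chi_W\cdot\rmd f^p,
\]
where $\tilde\chi_W=\mu^*\chi_W$ is $C^k$-bounded as a pullback of a smooth function on the compact manifold $Q$, while $f^p$ is uniformly $C^{k+1}$-bounded and $\rmd f^p=\theta'|_{W^p}$ is uniformly $C^k$-bounded by Step 2. Hence $\hat\theta|_{W^p}$ has $C^k$-norm bounded independently of $p$. Combined with the $C^k$-bound on $\theta'$ from Step 1, and the fact that $\tilde\chi_W\equiv1$ on a neighbourhood of $\partial W^p$ (so the two local formulas agree smoothly where they overlap), this yields the global bound $\|\hat\theta\|_{C^k}<\infty$. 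The main obstacle is the uniformity in $p$ of the primitive $f^p$: although each $W^p$ is individually a compact geodesic ball where the Poincar\'e lemma is unproblematic, the argument relies crucially on deck transformations acting by isometries so that the constant $C(W)$ does not depend on the sheet, together with the global $C^k$-control on $\theta$ propagating through Step 1 to a global $C^k$-control on $\theta'$.
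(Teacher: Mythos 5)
Your proposal is correct and reaches the conclusion, but it does more work than necessary in the key step of bounding $f^p$, and in doing so it slightly obscures what makes the lemma essentially trivial once the right definitions are in place.

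Where you and the paper agree: the first step (uniform $C^k$-bound on $\theta'$ via local isometries, compactness of $U$, and Remark~\ref{rem:behaviorunderisometries}) and the final Leibniz step on $\tilde\chi_W f^p$ are the same in spirit. Where you diverge is in how you bound the higher covariant derivatives of $f^p$. You propose to differentiate the Poincar\'e-lemma integral $f^p(x)=\int_0^1\theta'(\dot\gamma_x(t))\,\rmd t$ under the integral sign and invoke the uniform geometry of $W^p$ to get a $C^{k+1}$-estimate directly. This works, but it is more delicate than you make it sound: differentiating $k+1$ times in $x$ produces derivatives of the exponential map (Jacobi fields and their covariant derivatives), and the constant $C(W)$ really depends on curvature bounds on $W$. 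The paper avoids all of this by exploiting the definition in Remark~\ref{rem:ckforfunctions}: for $\ell\geq1$ one has tautologically $\big(\widetilde D^Q\big)^{\ell}f^p=\big(\widetilde D^Q\big)^{\ell-1}\rmd f^p=\big(\widetilde D^Q\big)^{\ell-1}\theta'|_{W^p}$, so \emph{only} the $C^0$-bound on $f^p$ requires the integral formula (and this $C^0$-bound is the easy part, already recorded in \cite[Proposition~2.5.1]{wz16}); all higher derivatives of $f^p$ are literally derivatives of $\theta'$, no Jacobi-field bookkeeping needed. The paper then applies the Leibniz rule to $\tilde\chi_W f^p$ as a function, giving
\[
\sum_{j=0}^{\ell+1}\binom{\ell+1}{j}\,\big(\widetilde D^Q\big)^{j}\tilde\chi_W\otimes\big(\widetilde D^Q\big)^{\ell+1-j}f^p,
\]
and substitutes the identity above for each $j\leq\ell$. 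Your approach buys nothing extra here and adds technical overhead; if you keep it, you should at least acknowledge that the differentiation-under-the-integral step relies on uniform control of the exponential map of $W^p$, which holds because each $W^p$ is isometric to the fixed compact disc $W$. The cleaner route, and the one you should prefer, is to observe that a $C^0$-bound on $f^p$ plus $\rmd f^p=\theta'|_{W^p}$ already gives the $C^{k+1}$-bound for free.
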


\begin{proof}
 First of all observe that $\theta'$
 is bounded in the $C^k$-norm
 as it is obtained from $\theta$ by adding
 the pull back of the compactly supported
 $1$-form $(\chi-1)\theta_U$ on $Q$
 along the local isometry $\mu$.
 That $\hat{\theta}$ is bounded in $C^0$
 was verified in \cite[Proposition 2.5.1]{wz16}.
 In order to estimate the $\ell$-th
 covariant derivative of $\hat{\theta}$ for $\ell\leq k$
 it suffices to do so for the restrictions
 $\rmd(\tilde{\chi}_Wf^p)$ of $\hat{\theta}$
 to $W^p$ for all $p\in\mu^{-1}(W)$.
 In view of Remark \ref{rem:ckforfunctions}
 we point out that the $(\ell+1)$-th
 covariant derivative of the products
 $\tilde{\chi}_Wf^p$ equals the sum of
 tensor products
 \[
 \sum_{j=0}^{\ell+1}
 \textstyle{{\ell+1\choose j}}\,
 \big(\widetilde{D}^Q\big)^j\tilde{\chi}_W
 \otimes
 \big(\widetilde{D}^Q\big)^{\ell+1-j}f^p
 \]
 according to the Leibniz rule.
 All covariant derivatives of $\tilde{\chi}_W$
 are bounded in view of Remark
 \ref{rem:behaviorunderisometries}.
 Moreover,
 a $C^0$-bound for $f^p$
 can be obtained as in
 \cite[Proposition 2.5.1]{wz16}.
 For $\ell+1-j\geq1$ we have by
 Remark \ref{rem:ckforfunctions}
 \[
 \big(\widetilde{D}^Q\big)^{\ell+1-j}f^p
 = 
 \big(\widetilde{D}^Q\big)^{\ell-j}\theta'|_{W^p}
 \,,
 \]
 which is bounded by the $C^k$-norm of $\theta'$
 and, hence, by the $C^k$-norm of $\theta$.
\end{proof}

\begin{rem}
\label{rem:remonsomecontconnsum}
If $\|\theta\|_{C^k}$ is finite,
then the primitive
\[
\hat{\alpha}=
\big(\tilde{\lambda}+\tilde{\tau}^*\hat{\theta}\big)|_{TM'}
\]
of $\pi^*{\hat{\omega}}$
is $C^k$-bounded,
where $\hat{\omega}:=\omega_{\hat{\sigma}}|_{TM}$,
and defines a virtually contact structure
\[
\big(\pi\co M'\ra M,\hat{\alpha},\hat{\omega},g\big)
\]
for all energies
$e>\sup_{\widetilde{Q}}\widetilde{H}(\hat{\theta})$
by \cite[Proposition 2.4.1]{wz16}
that is somewhere contact in the sense of
\cite[Section 2.5]{wz16}.
With the help of the covering connected sum
described in \cite[Section 2.2]{wz16}
a connected sum of somewhere contact
virtually contact structures
is defined.
Given two $C^k$-bounded
somewhere contact
virtually contact structures
the resulting virtually contact structure
on the connected sum
will be $C^k$-bounded too.
\end{rem}

%%%%%%%%%%%%%%%%%%%%%%%%%%%%%%%%%%%

\subsection{Classical Hamiltonians and magnetic fields\label{subsec:clahammagfie}}

In view of Section \ref{subsec:magensur}
let $(Q,h)$ be a $n$-dimensional
closed Riemannian manifold
that splits off a product of closed hyperbolic surfaces.
An $\R$-linear combination
of the corresponding area forms
coming from the surface factors
defines a magnetic form $\sigma$ on $Q$.
The lift of $\sigma$
to the universal cover $\widetilde{Q}$ of $Q$
has a primitive $1$-form $\theta$
that is the corresponding linear combination
of $\frac1y\rmd x$ for $(x,y)\in H^+$,
which is $C^k$-bounded
for all $k\in\N$ by the computations
in Section \ref{subsec:ucibanexample}.
If we replace $\sigma$
by a $2$-form on $Q$ whose
cohomology class is contained
in the subspace of $H_{\dR}^2Q$
spanned by the classes of the area forms
coming from the surface factors,
we obtain a magnetic form whose lift
admits a $C^{\infty}$-bounded primitive $\theta$.
This is because adding an exact $2$-form
on $Q$ keeps the property of having a bounded primitive
on $\widetilde{Q}$.
As discussed in Section \ref{subsec:magensur}
this leads to a rich class of examples of
virtually contact type
energy surfaces in classical mechanics with magnetic fields.

We would like to describe a particular class
of Hamiltonian systems on $(Q,h)$.
Let $\sigma$ and $\theta$ as above
and consider a Morse function $V$
on the manifold $Q$.
By composing $V$ with a strictly increasing function
we can arrange that:
\begin{enumerate}
\item
$V$ has a unique local maximum
which is assumed to be positive.
\item
All critical values of $V$
that correspond to critical points
of index $\leq n-1$
are strictly smaller than
\[
-\frac12 t_0^2\,,
\]
where $t_0:=\max_{\widetilde{Q}}|\theta|_{(\tilde{h})^{\flat}}$.
\item
\label{item3}
Denote by $c_{n-1}$ the largest
critical value  of $V$
not equal to the maximum of $V$.
Let $-v_0<0$ be a regular value of $V$
such that
\[
-v_0\in\left(c_{n-1},-\frac12 t_0^2\right)\,.
\]
We require that $\sigma$
vanishes on the $n$-disc
\[
\big\{V\geq-v_0\big\}
\subset Q
\]
and that $\theta$ vanishes on
\[
\big\{\widetilde{V}\geq-v_0\big\}
\subset\widetilde{Q}
\,,
\]
denoting by $\widetilde{V}$ the lift
of $V$ to $\widetilde{Q}$.
\end{enumerate}
Notice that item \eqref{item3} can be achieved
with truncation from
Section \ref{subsec:truncthemagfiel}.
It follows from \cite[Section 3.3]{wz16}
that $M=\{H=0\}$ is of virtually contact type
and somewhere contact.
We remark that the Ma\~n\'e critical value
of the magnetic system,
which is greater or equal than the maximum of $V$,
is positive.

\begin{proof}[{\bf Proof of Theorem \ref{thmintr:truncclasshammotion}}]
 We can assume the situation of the proceeding
 section for $n=2$.
 By \cite[Theorem 1.2]{wz16} the $3$-dimensional
 energy surface $M$ has non-vanishing $\pi_2M$.
 The existence of a periodic solution
 of the equations of motion that is contractible in $\{V\leq0\}$
 follows from Theorem \ref{thm:3dpi2nonzero}
 by projecting the obtained contractible periodic solution
 of $X_H$ on $M$ to $Q$ via $\tau$.
 The solution is non-constant according to the
 equations of motion.
 Indeed, a constant zero energy solution
 would be contained in the regular level set $\{V=0\}$ in $Q$,
 on which the magnetic form $\sigma$ vanishes,
 cf.\ \cite[p.~135]{sz16}.
\end{proof}

\begin{exwith}
\label{ex:reginflmagnontriv}
 We will describe magnetic Hamiltonian systems
 to which Theorem \ref{thmintr:truncclasshammotion}
 applies such that the obtained solution inside $\{V\leq0\}$
 cannot stay entirely in the annulus 
 $\big\{-v_0\leq V\leq0\big\}$,
 on which the magnetic form vanishes:
 Integrating gradient flow lines as in \cite[p.~153]{hir94}
 we find a diffeomorphism of $[-v_0,0]\times S^1$
 onto $\big\{-v_0\leq V\leq0\big\}$ such that
 the metric tensor $h_{ij}$ is given by a diagonal matrix
 with respect to the $(r,\theta)$-coordinates
 and such that $V(r,\theta)=r$.
 In fact,
 we can conformally change the metric $h$
 by multiplying with a function
 that restricted to the annulus equals
 the norm square of the gradient of $V$
 so that $h_{11}=1$.
 On $Q$ we choose magnetic fields subject to the requirements
 of Theorem \ref{thmintr:truncclasshammotion}.
 
 For the systems described we find
 a non-constant periodic solution $\gamma$
 of the equations of motion with $H(\gamma)=0$.
 Moreover,
 $\gamma$ is contractible in $\{V\leq0\}$.
 We claim that the trace of $\gamma$
 cannot be contained in the annulus
 $\big\{-v_0\leq V\leq0\big\}$.
 Otherwise we will reach a contradiction as follows:
 Via the gradient flow the solution $\gamma$
 can be homotoped into the boundary of $\{V\leq0\}$,
 which is a circle.
 The induced mapping degree of the homotoped
 $\gamma$ must vanish as otherwise a multiple
 of the boundary circle $\{V=0\}$ would be contractible in $\{V\leq0\}$.
 This implies that $\gamma$ is contractible
 inside the annulus.
 Therefore,
 we can consider the lift $\big(r(t),\theta(t)\big)$ of $\gamma$
 to the universal cover $[-v_0,0]\times\R$
 that is equipped with $(r,\theta)$-coordinates
 for which the metric tensor of $h$
 is diagonal such that $h_{11}=1$
 and $\grad V$ equals $\partial_r$.
 As $\theta$ has to have extremal points
 we find $t_0$ such that
 $\big(\dot{r}(t_0),\dot{\theta}(t_0)\big)=\big(\sqrt{-2r(t_0)},0\big)$.
 On the other hand,
 the Christoffel symbols
  $\Gamma_{11}^1$ and $\Gamma_{11}^2$
 of the Levi--Civita connection $D^Q$ of $h$ vanish
 such that the curve
 $\beta(t)=\big(b(t),\theta(t_0)\big)$
 for a quadratic polynomial $b(t)$ with
 leading term $-\frac12t^2$,
 and with $b(t_0)=r(t_0)$ and
 $\dot{b}(t_0)=\sqrt{-2r(t_0)}$
 is a solution of the equation of motion
 $D^Q_{\dot{\beta}}\dot{\beta}=-(1,0)$
 of zero energy.
 Using uniqueness of solutions of
 second order ordinary differential equations
 for given initial data
 and an open--closed argument
 we see that the periodic solution
 $\gamma\co\R\ra[-v_0,0]\times\R$
 coincides with $\beta$ where it is defined.
 In particular, the trace of $\gamma$
 connects the boundary components of the annulus
 along a gradient flow line of $V$
 so that $\gamma$ has vanishing velocity
 at the intersection with the lower boundary
 corresponding to $-v_0$.
 This is a contradiction.
\end{exwith}

\begin{proof}[{\bf Proof of Theorem \ref{thmintr:hightruncclasshammotion}}]
 We assume that $n\geq3$.
 The proof is based on the fact
 that the energy surface
 $M=\{H=0\}$ is of virtually contact type
 such that the upper boundary
 of the standard $(n-1)$-handle $D^{n-1}\times D^{n+1}$
 admits a contact embedding into $(M',\ker\alpha)$
 such that the image of the belt sphere
 represents a non-trivial homology class
 of degree $n$ in $M'$.
 
 We give a direct verification
 of the contact type property of $M'$.
 For this denote by $X$
 the gradient vector field of $\tau^*V$
 with respect to the metric $m$ on $T^*Q$
 and define a function $F$ on $T^*Q$
 by setting $F=\lambda(X)$.
 We remark that $F(u)=h^{\flat}(u,\rmd V)$
 for all covectors $u$ on $Q$.
 Lifting to the universal cover $T^*\widetilde{Q}$
 we obtain for all $\varepsilon>0$
 and covectors $\tilde{u}$ on $\widetilde{Q}$ that
 \[
 \Big(\tilde{\lambda}+\tilde{\tau}^*\theta-\varepsilon\rmd\widetilde{F}\Big)
 \big(X_{\widetilde{H}}\big)(\tilde{u})
 \]
 is equal to the sum of
 \[
 |\tilde{u}|^2_{(\tilde{h})^{\flat}}+
 (\tilde{h})^{\flat}(\tilde{u},\theta)
 \]
 and $\varepsilon$ times
 \[
 -\big(\!\Hess_{\tilde{h}}\widetilde{V}\big)
 (\tilde{u}^{\#},\tilde{u}^{\#})
 +\big|\!\grad_{\tilde{h}}\widetilde{V}\big|^2_{\tilde{h}}
 +\big(\tau^*\mu^*\sigma\big)\Big(\tilde{u}^{\#},\grad_{\tilde{h}}\widetilde{V}\Big)
 \,,
 \]
 where $X_{\widetilde{H}}$ denotes the Hamiltonian vector field
 of the system $(\tilde{\omega}_{\rmd\theta},\widetilde{H})$,
 $\tilde{u}^{\#}$ is the $\tilde{h}$-dual vector of $\tilde{u}$,
 and $\Hess_{\tilde{h}}\widetilde{V}=\widetilde{D}^Q\rmd\widetilde{V}$
 is the Hessian of $\widetilde{V}$.
 Under the assumption of the theorem
 we can assume the situation
 described at the beginning of this section.
 Distinguishing the cases
 $\frac12|\tilde{u}|^2_{(\tilde{h})^{\flat}}\geq v_0$
 and
 $\frac12|\tilde{u}|^2_{(\tilde{h})^{\flat}}<v_0$
 one shows that the above sum
 is uniformly positive along
 \[
 \left\{
 \frac12|\tilde{u}|^2_{(\tilde{h})^{\flat}}=
 -\widetilde{V}\big(\tilde{\tau}(\tilde{u})\big)
 \right\}
 \]
 for some small $\varepsilon>0$.
 It follows that $M'$ is of virtually contact type.
 
 In fact, taking the family $t\theta$ of $1$-forms
 for $t\in[0,1]$,
 which corresponds to the family $t\sigma$
 of magnetic forms, it follows that
 \[
 \alpha_t=
 \big(
 \tilde{\lambda}+t\cdot\tilde{\tau}^*\theta-\varepsilon\rmd\widetilde{F}
 \big)|_{TM'}
 \]
 is a family of contact forms on $M'$
 that connects $\alpha=\alpha_1$
 with a contact form $\alpha_0$,
 which descends to the contact form
 $\alpha_W:=(\lambda-\varepsilon\rmd F)|_{TM}$
 on $M$.
 The induced contact structures
 are contactomorphic along a global flow
 as the Gray stability
 argument in \cite[p.~60/61]{gei08} shows.
 Indeed, the Moser integrating time-dependent
 vector field on $M'$ is bounded
 with respect to the complete metric $g'$
 because $\alpha_t$ is bounded in $C^1$
 (see Sections \ref{subsec:magensur}
 and \ref{subsec:truncthemagfiel})
 as local arguments used
 in Section \ref{subsec:indconvcpxstr} show.
 By \cite[Example 11.12 (2)]{ce12}
 $(M,\ker\alpha_W)$ is the contact type
 boundary of the Weinstein handle body
 $\{H\leq0\}$ in $(T^*Q,\rmd\lambda)$
 with Morse function $H$ and Liouville vector field
 $\mathbf{p}\partial_{\mathbf{p}}+\varepsilon X_F$
 because
 \[
 \rmd H\big(\mathbf{p}\partial_{\mathbf{p}}+\varepsilon X_F\big)
 =(\lambda-\varepsilon\rmd F)(X_H)
 \]
 on $(T^*Q,\rmd\lambda)$.
 By \cite[Section 3.1]{wz16} and
 the assumptions on the potential $V$
 the Weinstein handle body $W$ is subcritical.
 
 Up to Weinstein handle moves
 according to the Morse--Smale theory
 for Weinstein structures in
 \cite[Chapters 10, 12, and 14]{ce12}
 we can assume that the largest critical value
 is taken at precisely one critical point $p_0$
 and the Morse index of $p_0$ equals $n-1$.
 In view of Gray stability
 this changes the contact structure
 on the boundary by a contact isotopy.
 With \cite[Proposition 12.12]{ce12}
 we can assume after a Weinstein isotopy
 supported in a neighbourhood of $p_0$
 that the Weinstein structure near $p_0$
 is given by the standard model handle
 as it appears in contact surgery,
 cf.\ \cite{gei08}.
 Replacing $\partial W$ by a regular level set
 slightly above the largest critical value,
 what results in a further
 contact isotopy of $(M,\ker\alpha_W)$
 by following the Liouville flow,
 the upper boundary of the model handle
 can be assumed to be contained
 in the boundary of the Weinstein handle body.
 After all we obtain a contact embedding
 of the upper boundary $D^{n-1}\times S^n$
 of the standard handle
 into $(M,\ker\alpha_W)$ whose belt sphere
 represents a non-zero homology class by
 \cite[Section 3.2]{wz16}.
 Hence, $D^{n-1}\times S^n$
 lifts to a contact handle in $(M',\ker\alpha)$
 so that the image of $S^n$ is non-zero
 in the homology of $M'$.
 In view of Theorem \ref{thm:2n-1dhnnonzero}
 and \cite[p.~135]{sz16}
 this proves the theorem.
\end{proof}

%%%%%%%%%%%%%%%%%%%%%%%%%%%%%%%%%%%

\appendix
\section{The third covariant derivative\label{thirdcovderiv}}

In view of the proof of
Proposition \ref{ckthetatockalpha}
we remark that the third covariant derivative
$\nabla'_X\big((\nabla')^2\alpha\big)$
evaluated on the triple of vector fields
$(Y_1,Y_2,Y_3)$
is the sum of the following terms
after restricting to $M'$:
\[
\Big(
\widetilde{D}_X\big(\widetilde{D}^2\beta\big)\Big)
(Y_1,Y_2,Y_3)
\]
and $\widetilde{D}^2\beta$
evaluated on the triples
\[
\big(\II'(X,Y_1),Y_2,Y_3\big)
\quad\text{and}\quad
\big(Y_1,\II'(X,Y_2),Y_3\big)
\quad\text{and}\quad
\big(Y_1,Y_2,\II'(X,Y_3)\big)
\]
and $\widetilde{D}_X\big(\widetilde{D}\beta\big)$
evaluated on the tuples
\[
\big(\II'(Y_1,Y_2),Y_3\big)
\quad\text{and}\quad
\big(Y_1,\II'(Y_2,Y_3)\big)
\quad\text{and}\quad
\big(Y_2,\II'(Y_1,Y_3)\big)
\]
and $\widetilde{D}\beta$
evaluated on the tuples
\[
\big(\II'(Y_1,Y_2),\II'(X,Y_3)\big)
\;\text{,}\quad
\big(\II'(X,Y_2),\II'(Y_1,Y_3)\big)
\;\text{,}\quad
\big(\II'(X,Y_1),\II'(Y_2,Y_3)\big)
\]
and $\widetilde{D}\beta$
evaluated on the tuples
\[
\Big(\II'\big(X,\II'(Y_1,Y_2)\big),Y_3\Big)
\;\text{,}\quad
\Big(Y_2,\II'\big(X,\II'(Y_1,Y_3)\big)\Big)
\;\text{,}\quad
\Big(Y_1,\II'\big(X,\II'(Y_2,Y_3)\big)\Big)
\]
and $\beta\circ\II'$
evaluated on the tuples
\[
\Big(\II'(Y_1,Y_2),\II'(X,Y_3)\Big)
\quad\text{and}\quad
\Big(\II'(X,Y_2),\II'(Y_1,Y_3)\Big)
\]
and $\beta\circ\II'$
evaluated on the tuples
\[
\Big(\II'\big(X,\II'(Y_1,Y_2)\big),Y_3\Big)
\quad\text{and}\quad
\Big(Y_2,\II'\big(X,\II'(Y_1,Y_3)\big)\Big)
\]
and
\[
\big(\widetilde{D}_X\beta\big)
\Big(\big(\widetilde{D}_{Y_1}\II'\big)(Y_2,Y_3)\Big)+
\beta\Big(\II'\big(X,\big(\widetilde{D}_{Y_1}\II'\big)(Y_2,Y_3)\big)\Big)
\]
as well as
\[
\Big(\widetilde{D}_X\big(\beta\circ\II'\big)\Big)
\big(\II'(Y_1,Y_2),Y_3\big)
\,,
\]
which equals,
\[
\big(\widetilde{D}_X\beta\big)
\Big(\II'\big(\II'(Y_1,Y_2),Y_3\big)\Big)+
\beta \Big(\big(\widetilde{D}_X\II')\big(\II'(Y_1,Y_2),Y_3\big)\Big)
\]
and
\[
\Big(\widetilde{D}_X\big(\beta\circ\II'\big)\Big)
\big(Y_2,\II'(Y_1,Y_3)\big)
\,,
\]
which equals,
\[
\big(\widetilde{D}_X\beta\big)
\Big(\II'\big(Y_2,\II'(Y_1,Y_3)\big)\Big)+
\beta \Big(\big(\widetilde{D}_X\II')\big(Y_2,\II'(Y_1,Y_3)\big)\Big)
\,.
\]

%%%%%%%%%%%%%%%%%%%%%%%%%%%%%%%%%%%
%%%%%%%%%%%%%%%%%%%%%%%%%%%%%%%%%%%

\begin{ack}
  We would like to thank Alberto Abbondandolo,
  Peter Albers, Urs Frauenfelder,
  Hansj\"org Geiges, Otto van Koert, Yong-Geun Oh, Felix Schlenk,
  Jay Schneider, Stefan Suhr,
  Anna-Maria Vocke, Rui Wang.
\end{ack}

%%%%%%%%%%%%%%%%%%%%%%%%%%%%%%%%%%%
%%%%%%%%%%%%%%%%%%%%%%%%%%%%%%%%%%%


\begin{thebibliography}{10}
%
\bibitem{ach05}
  {\sc C. Abbas, K. Cieliebak, H. Hofer},
  The {W}einstein conjecture for planar contact structures in
              dimension three,
  \textit{Comment. Math. Helv.} {\bf 80} (2005),
   771--793.
%
\bibitem{aabmt17}
  {\sc A. Abbondandolo, L. Asselle, G. Benedetti, M. Mazzucchelli, I. A. Ta\u\i manov},
  The multiplicity problem for periodic orbits of magnetic flows
              on the 2-sphere,
  \textit{Adv. Nonlinear Stud.} {\bf 17} (2017),
  17--30.
%
\bibitem{ammp17}
  {\sc A. Abbondandolo, L. Macarini, M. Mazzucchelli, G. P. Paternain},
     Infinitely many periodic orbits of exact magnetic flows on
              surfaces for almost every subcritical energy level,
  \textit{J. Eur. Math. Soc. (JEMS)} {\bf 19} (2017),
  551--579.
%
\bibitem{amp15}
  {\sc A. Abbondandolo, L. Macarini, G. P. Paternain},
  On the existence of three closed magnetic geodesics for
              subcritical energies,
  \textit{Comment. Math. Helv.} {\bf 90} (2015),
  155--193.
%
\bibitem{af07}
  {\sc A. Abbondandolo, A. Figalli},
  High action orbits for {T}onelli {L}agrangians and superlinear
              {H}amiltonians on compact configuration spaces,
  \textit{J. Differential Equations} {\bf 234} (2007),
   626--653.
%
\bibitem{ah09}
  {\sc P. Albers, H. Hofer},
  On the {W}einstein conjecture in higher dimensions,
  \textit{Comment. Math. Helv.} {\bf 84} (2009),
  429--436.
%
\bibitem{arn61}
  {\sc V. I. Arnol'd},
  Some remarks on flows of line elements and frames,
  \textit{Dokl. Akad. Nauk SSSR} {\bf 138} (1961),
  255--257.
%
\bibitem{ab15}
  {\sc L. Asselle, G. Benedetti},
  Infinitely many periodic orbits in non-exact oscillating
              magnetic fields on surfaces with genus at least two for almost
              every low energy level,
  \textit{Calc. Var. Partial Differential Equations} {\bf 54} (2015),
  1525--1545.
%
\bibitem{ab16}
  {\sc L. Asselle, G. Benedetti},
  The {L}usternik-{F}et theorem for autonomous {T}onelli
              {H}amiltonian systems on twisted cotangent bundles,
  \textit{J. Topol. Anal.} {\bf 8} (2016),
  545--570.
%
\bibitem{alt16}
  {\sc H. W. Alt},
  \textit{Linear functional analysis},
  Universitext, An application-oriented introduction,
  Translated from the German edition by Robert N\"urnberg,
  Springer-Verlag London, Ltd., London, 2016,
  xii+435.
%
\bibitem{bf11}
  {\sc Y. Bae, U. Frauenfelder},
  Continuation homomorphism in {R}abinowitz {F}loer homology for
              symplectic deformations,
  \textit{Math. Proc. Cambridge Philos. Soc.} {\bf 151} (2011),
  471--502.
%
\bibitem{bz15}
  {\sc G. Benedetti, K. Zehmisch},
  On the existence of periodic orbits for magnetic systems on
              the two-sphere,
  \textit{J. Mod. Dyn.} {\bf 9} (2015),
  141--146.
%
\bibitem{bprv16}
  {\sc J. B. van den Berg, F. Pasquotto, T. Rot, R. C A. M. Vandervorst},
  On periodic orbits in cotangent bundles of non-compact
              manifolds,
  \textit{J. Symplectic Geom.} {\bf 14} (2016),
  1145--1173.
%
\bibitem{bpv09}
  {\sc J. B. van den Berg, F. Pasquotto, R. C. A. M. Vandervorst},
  Closed characteristics on non-compact hypersurfaces
  in {$\Bbb R^{2n}$},
  \textit{Math. Ann.} {\bf 343} (2009),
  247--284.
%
\bibitem{bes87}
  {\sc A. L. Besse},
  \textit{Einstein manifolds},
  Classics in Mathematics, Reprint of the 1987 edition,
  Springer-Verlag, Berlin, 2008,
  xii+516.
%
\bibitem{cha06}
  {\sc I. Chavel},
  \textit{Riemannian geometry},
  Cambridge Studies in Advanced Mathematics, 98,
  Second, A modern introduction, Cambridge University Press,
  Cambridge, 2006,
  xvi+471.
%
\bibitem{ce12}
  {\sc K. Cieliebak, Ya. Eliashberg},
  \textit{From {S}tein to {W}einstein and back},
  American Mathematical Society Colloquium Publications, 59,
  Symplectic geometry of affine complex manifolds,
  American Mathematical Society, Providence, RI, 2012,
  xii+364.
%
\bibitem{cfp10}
  {\sc K. Cieliebak, U. Frauenfelder, G. P. Paternain},
  Symplectic topology of {M}a\~n\'e's critical values,
  \textit{Geom. Topol.} {\bf 14} (2010),
  1765--1870.
%
\bibitem{con06}
  {\sc G. Contreras},
  The {P}alais-{S}male condition on contact type energy levels
              for convex {L}agrangian systems,
  \textit{Calc. Var. Partial Differential Equations} {\bf 27} (2006),
  321--395.
%
\bibitem{dige12}
  {\sc F. Ding, H. Geiges},
  Contact structures on principal circle bundles,
  \textit{Bull. London Math. Soc.}
  \textbf{44} (2012),
  1189--1202.
%
\bibitem{dgz14}
  {\sc M. D\"orner, H. Geiges, K. Zehmisch},
  Open books and the {W}einstein conjecture,
  \textit{Q. J. Math.} {\bf 65} (2014),
  869--885.
%
\bibitem{dgz}
  {\sc M. D\"orner, H. Geiges, K. Zehmisch},
  Finsler geodesics, periodic Reeb orbits, and open books,
  \textit{Eur. J. Math.} {\bf 3} (2017),
  1058--1075.
%
\bibitem{fr08}
  {\sc U. Frauenfelder},
  Gromov convergence of pseudoholomorphic disks,
  \textit{J. Fixed Point Theory Appl.} {\bf 3} (2008),
  215--271.
%
\bibitem{fsch07}
  {\sc U. Frauenfelder, F. Schlenk},
  Hamiltonian dynamics on convex symplectic manifolds,
  \textit{Israel J. Math.} {\bf 159} (2007),
  1--56.
%
\bibitem{fz15}
  {\sc U. Frauenfelder, K. Zehmisch},
  Gromov compactness for holomorphic discs with totally real
              boundary conditions,
  \textit{J. Fixed Point Theory Appl.} {\bf 17} (2015),
  521--540.
%
\bibitem{ghl04}
  {\sc S. Gallot, D. Hulin, J. Lafontaine},
  \textit{Riemannian geometry},
  Universitext, Third, Springer-Verlag, Berlin, 2004,
  xvi+322.
%
\bibitem{gei08}
  {\sc H. Geiges},
  \textit{An introduction to contact topology},
  Cambridge Studies in Advanced Mathematics {\bf 109},
  Cambridge University Press, Cambridge (2008),
  xvi+440.
%
\bibitem{gz10}
  {\sc H. Geiges, K. Zehmisch},
  Eliashberg's proof of {C}erf's theorem,
  \textit{J. Topol. Anal.} {\bf 2} (2010),
  543--579.
%
\bibitem{gz12}
  {\sc H. Geiges, K. Zehmisch},
  Symplectic cobordisms and the strong {W}einstein conjecture,
  \textit{Math. Proc. Cambridge Philos. Soc.} {\bf 153} (2012),
  261--279.
%
\bibitem{gz13}
  {\sc H. Geiges, K. Zehmisch},
  How to recognize a 4-ball when you see one,
  \textit{M\"unster J. Math.} {\bf 6} (2013),
  525--554.
%
\bibitem{gz16a}
  {\sc H. Geiges, K. Zehmisch},
  The {W}einstein conjecture for connected sums,
  \textit{Int. Math. Res. Not. IMRN}\ {\bf 2016},
  325--342.
%
\bibitem{gz16b}
  {\sc H. Geiges, K. Zehmisch},
  Reeb dynamics detects odd balls,
  \textit{Ann. Sc. Norm. Super. Pisa Cl. Sci. (5)} {\bf 15} (2016),
  663--681.
%
\bibitem{gnw16}
  {\sc P. Ghiggini, K. Niederkr\"uger, C. Wendl},
  Subcritical contact surgeries and the topology of symplectic
              fillings,
  \textit{J. \'Ec. polytech. Math.} {\bf 3} (2016),
  163--208.
%
\bibitem{gin87}
  {\sc V. L. Ginzburg},
  New generalizations of {P}oincar\'e's geometric theorem,
  \textit{Funktsional. Anal. i Prilozhen.} {\bf 21} (1987),
   16--22, 96.  
%
\bibitem{gg09}
  {\sc V. L. Ginzburg, B. Z. G\"urel},
  Periodic orbits of twisted geodesic flows and the
              {W}einstein-{M}oser theorem,
  \textit{Comment. Math. Helv.} {\bf 84} (2009),
   865--907.
%
\bibitem{Hatcher}
  \textsc{A. Hatcher},
  \textit{Notes on Basic $3$-Manifold Topology},
  course notes,\\
  {\tt https://www.math.cornell.edu/$\sim$hatcher/3M/3Mfds.pdf}
%
\bibitem{hir94}
  {\sc M. W. Hirsch},
  \textit{Differential topology},
  Graduate Texts in Mathematics,
  33, Corrected reprint of the 1976 original,
  Springer-Verlag, New York, 1994,
  x+222.
%
\bibitem{hof93}
  {\sc H. Hofer},
  Pseudoholomorphic curves in symplectizations with applications
  to the {W}einstein conjecture in dimension three,
  \textit{Invent. Math.} {\bf 114} (1993),
  515--563.
%
\bibitem{hof99}
  {\sc H. Hofer},
  Holomorphic curves and dynamics in dimension three, in:
  \textit{Symplectic geometry and topology ({P}ark {C}ity, {UT}, 1997)},
  IAS/Park City Math. Ser., 7,
  35--101, Amer. Math. Soc., Providence, RI,
  1999.
%
\bibitem{hum97}
  {\sc C. Hummel},
  \textit{Gromov's compactness theorem for pseudo-holomorphic curves},
  Progress in Mathematics, 151,
  Birkh\"auser Verlag, Basel, 1997,
  viii+131.
%
\bibitem{klb95}
  {\sc W. P. A. Klingenberg},
  \textit{Riemannian geometry},
  De Gruyter Studies in Mathematics, 1, Second,
  Walter de Gruyter \& Co., Berlin, 1995,
  x+409.
%
\bibitem{mnw13}
  {\sc P. Massot, K. Niederkr\"uger, C. Wendl},
  Weak and strong fillability of higher dimensional contact
              manifolds,
  \textit{Invent. Math.} {\bf 192} (2013),
  287--373.
%
\bibitem{mer11}
  {\sc W. J. Merry},
  On the {R}abinowitz {F}loer homology of twisted cotangent
              bundles,
  \textit{Calc. Var. Partial Differential Equations} {\bf 42} (2011),
  355--404.
%
\bibitem{mer10}
  {\sc W. J. Merry},
  Closed orbits of a charge in a weakly exact magnetic field,
  \textit{Pacific J. Math.} {\bf 247} (2010),
  189--212.
%
\bibitem{miln65}
{\sc J. Milnor},
\textit{Lectures on the $h$-Cobordism Theorem},
Princeton University Press, Princeton, NJ (1965).
%
\bibitem{mdsa04}
  {\sc D. McDuff, D. Salamon},
  \textit{{$J$}-holomorphic curves and symplectic topology},
   American Mathematical Society Colloquium Publications,
   52, American Mathematical Society, Providence, RI, 2004,
   xii+669.
%
\bibitem{nr11}
  {\sc K. Niederkr\"uger, A. Rechtman},
  The {W}einstein conjecture in the presence of submanifolds
  having a {L}egendrian foliation,
  \textit{J. Topol. Anal.} {\bf 3} (2011),
  405--421.
%
\bibitem{nov82}
  {\sc S. P. Novikov},
   The {H}amiltonian formalism and a multivalued analogue of
              {M}orse theory,
  \textit{Uspekhi Mat. Nauk} {\bf 37} (1982),
   3--49.
%
\bibitem{nosh81}
  {\sc S. P. Novikov, I. Shmel'tser},
  Periodic solutions of {K}irchhoff equations for the free
              motion of a rigid body in a fluid and the extended
              {L}yusternik-{S}hnirel'man-{M}orse theory. {I},
  \textit{Funktsional. Anal. i Prilozhen.} {\bf 15} (1981),
  54--66.
%
\bibitem{pet06}
  {\sc P. Petersen},
  \textit{Riemannian geometry},
  second ed., Graduate Texts in Mathematics, 171,
  Springer, New York, 2006,
  xvi+401.
%
\bibitem{pol95}
  {\sc L. Polterovich},
   An obstacle to non-{L}agrangian intersections, in:
  \textit{The {F}loer memorial volume}, Progr. Math., 133,
  Birkh\"auser, Basel, 1995,
  575--586.
%
\bibitem{rosa13}
  {\sc J. W. Robbin, D. A. Salamon},
  \textit{Introduction to differential geometry},
  lecture notes,
  {\tt https://people.math.ethz.ch/$\sim$salamon/PREPRINTS/diffgeo.pdf}
%
\bibitem{sch06}
  {\sc F. Schlenk},
  Applications of {H}ofer's geometry to {H}amiltonian dynamics,
  \textit{Comment. Math. Helv.} {\bf 81} (2006),
  105--121.
%
\bibitem{sz16}
  {\sc S. Suhr, K. Zehmisch},
  Linking and closed orbits,
  \textit{Abh. Math. Semin. Univ. Hambg.} {\bf 86} (2016),
  133--150.
%
\bibitem{sz17}
  {\sc S. Suhr, K. Zehmisch},
  Polyfolds, cobordisms, and the strong {W}einstein conjecture,
  \textit{Adv. Math.} {\bf 305} (2017),
  1250--1267.
%
\bibitem{tai92}
  {\sc I. A. Ta\u\i manov},
  Closed extremals on two-dimensional manifolds,
  \textit{Uspekhi Mat. Nauk} {\bf 47} (1992),
  143--185, 223.
%
\bibitem{tau07}
  {\sc C. H. Taubes},
  The {S}eiberg-{W}itten equations and the {W}einstein conjecture,
  \textit{Geom. Topol.} {\bf 11} (2007),
  2117--2202.
%
\bibitem{vit87}
  {\sc C. Viterbo},
  A proof of {W}einstein's conjecture in {${\bf R}^{2n}$},
  \textit{Ann. Inst. H. Poincar\'e Anal. Non Lin\'eaire} {\bf 4} (1987),
  337--356.
%
\bibitem{wein79}
 {\sc A. Weinstein},
 On the hypotheses of Rabinowitz' periodic orbit theorems,
 \emph{J. Differential Equations} {\bf 33} (1979),
 336--352.
%
\bibitem{wz16}
  {\sc K. Wiegand, K. Zehmisch},
  Two constructions of virtually contact structures,
  \emph{J.\ Symplectic Geom.} {\bf 16} (2018),
  563--583.   
\end{thebibliography}
\end{document}